\newtheorem{theorem}{Theorem}[section]
\newtheorem{corollary}[theorem]{Corollary}
\newtheorem{lemma}[theorem]{Lemma}
\newtheorem{prop}[theorem]{Proposition}
\newtheorem{thm}{Theorem}
\theoremstyle{definition}
\newtheorem{definition}[theorem]{Definition}
\newtheorem{example}[theorem]{Example}
\newtheorem{remark}[theorem]{Remark}
\newtheorem*{ack}{Acknowledgments}
\newcommand{\Z}{\mathbb{Z}}
\newcommand{\Q}{\mathbb{Q}}
\newcommand{\R}{\mathbb{R}}
\newcommand{\C}{\mathbb{C}}
\newcommand{\F}{\mathbb{F}}
\renewcommand{\AA}{\mathbb{A}}
\renewcommand{\k}{\Bbbk}
\newcommand{\A}{{\mathcal{A}}}
\newcommand{\As}{A}
\newcommand{\Ps}{\mathsf{P}}
\newcommand{\Qs}{\mathsf{Q}}
\newcommand{\Rs}{\mathsf{R}}
\newcommand{\Ss}{\mathsf{S}}
\DeclareMathOperator{\rank}{rank}
\DeclareMathOperator{\gr}{gr}
\DeclareMathOperator{\im}{im}
\DeclareMathOperator{\id}{id}
\DeclareMathOperator{\ab}{{ab}}
\DeclareMathOperator{\Sym}{Sym}
\DeclareMathOperator{\ch}{char}
\DeclareMathOperator{\GL}{GL}
\DeclareMathOperator{\Hom}{{Hom}}
\DeclareMathOperator{\proj}{pr}
\DeclareMathOperator{\ev}{ev}
\DeclareMathOperator{\linn}{\,lin}
\DeclareMathOperator{\Mat}{{Mat}}
\newcommand{\wX}{\widetilde{X}}
\newcommand{\wY}{\widetilde{Y}}
\newcommand{\same}{\Longleftrightarrow}
\newcommand{\surj}{\twoheadrightarrow}
\newcommand{\inj}{\hookrightarrow}
\newcommand{\DS}{\displaystyle}
\newcommand{\brac}[1]{[\![ #1 ]\!]}
\newcommand{\tr}{{\scriptstyle{\mathsf{T}}}}
\newcommand{\dnu}{\partial^{\nu}}
\begin{document}

\title[Spectral sequence of an equivariant chain complex]{%
The spectral sequence of an equivariant chain complex and 
homology with local coefficients}

\author[Stefan Papadima]{Stefan Papadima$^1$}
\address{Institute of Mathematics Simion Stoilow, 
P.O. Box 1-764,
RO-014700 Bucharest, Romania}
\email{Stefan.Papadima@imar.ro}
\thanks{$^1$Partially supported by the CEEX Programme of the 
Romanian Ministry of Education and Research, contract
2-CEx 06-11-20/2006}

\author[Alexander~I.~Suciu]{Alexander~I.~Suciu$^2$}
\address{Department of Mathematics,
Northeastern University,
Boston, MA 02115, USA}
\email{a.suciu@neu.edu}
\urladdr{http://www.math.neu.edu/\~{}suciu}
\thanks{$^2$Partially supported by NSF grant DMS-0311142}

\subjclass[2000]{Primary
55N25, 
55T99.  
Secondary
20J05, 
57M05. 
}

\keywords{Equivariant chain complex, $I$-adic filtration, 
spectral sequence, twisted homology, minimal cell complex, 
Aomoto complex, Betti numbers.}

\begin{abstract}
We study the spectral sequence associated to the filtration 
by powers of the augmentation ideal on the (twisted) equivariant 
chain complex of the universal cover of a connected 
CW-complex $X$.  In the process, we identify the $d^1$ 
differential in terms of the coalgebra structure of $H_*(X,\k)$, 
and the $\k\pi_1(X)$-module structure on the twisting coefficients. 
In particular, this recovers in dual form a result of Reznikov, 
on the mod $p$ cohomology of cyclic $p$-covers of
aspherical complexes. This approach provides 
information on the homology of all Galois covers of $X$. 
It also yields computable upper bounds on the ranks 
of the cohomology groups of $X$, with coefficients in 
a prime-power order, rank one local system.  When 
$X$ admits a minimal cell decomposition, we relate 
the linearization of the equivariant cochain complex 
of the universal abelian cover to the Aomoto complex, 
arising from the cup-product structure of $H^*(X,\k)$, 
thereby generalizing a result of Cohen and Orlik. 
\end{abstract}
\maketitle

\tableofcontents

\section{Introduction}
\label{sect:intro}

\subsection{The equivariant chain complex} 
\label{intro:eq chain}
In his pioneering work from the late 1940s, J.H.C.~Whitehead 
established the category of CW-complexes as the natural 
framework for much of homotopy theory. In \cite{Wh}, 
he highlighted the key role played by the cellular chain 
complex of the universal cover, $\wX$, of a connected 
CW-complex $X$. Among other things, Whitehead showed 
that a map $f\colon X\to Y$ is a homotopy equivalence if 
and only if the induced map between equivariant chain complexes, 
$\tilde{f_*}\colon C_{\bullet} (\wX,\Z)\to C_{\bullet} (\wY,\Z)$, is 
an equivariant chain-homotopy equivalence.  For $2$-dimensional 
complexes, the fundamental group, together with the equivariant 
chain-homotopy type of $C_{\bullet} (\wX,\Z)$, constitute a complete 
set of homotopy type invariants: if $X$ and $Y$ are two such spaces, 
with $\pi_1(X) \cong \pi_1(Y)$ and $C_{\bullet} (\wX,\Z)\simeq 
C_{\bullet} (\wY,\Z)$, then $X\simeq Y$. 

As noted by S.~Eilenberg \cite{Ei}, the equivariant chain complex 
is tightly connected to homology with twisted coefficients: 
Given a linear representation, $\rho\colon \pi_1(X) \to \GL(n,\k)$, 
the homology of $X$ with coefficients in the local system 
${}_{\rho}\k^n$ is  $H_* ({}_{\rho}\k^n \otimes_{\k \pi_1(X)} 
C_{\bullet}(\wX,\k))$. 

In this paper, we revisit these classical topics, drawing much 
of the motivation from recent work on the topology of complements 
of hyperplane arrangements, and the study of cohomology 
jumping loci. One of our main goals is to give tight upper 
bounds for the twisted Betti ranks, computable in terms of 
much simpler data, involving only ordinary cohomology. 
The basic tool for our approach is a spectral 
sequence, which we now proceed to describe. 

\subsection{The equivariant spectral sequence}
\label{intro:thmA}
Let $\k\pi$ be the group ring of a group $\pi$, over 
a coefficient ring $\k$, and let $M$ be a right $\k\pi$-module.  
The successive powers of the augmentation ideal, $I=I_{\k}(\pi)$, 
determine a filtration on $M$; the associated graded object, 
$\gr(M)=\bigoplus_{n\ge 0} MI^n /MI^{n+1}$, is a module over the 
ring $\gr(\k\pi)$. 

Now let $X$ be a connected CW-complex, with 
fundamental group $\pi=\pi_1(X)$, and let 
$C_{\bullet}(X, M)=M\otimes_{\k\pi} C_{\bullet}(\wX,\k)$ 
be the equivariant chain complex of $X$ with coefficients in $M$, 
discussed in \S\ref{sec:equivchain}.
The $I$-adic filtration on $C_{\bullet}(X, M)$ is compatible 
with the boundary maps, and thus gives rise to a spectral sequence, 
$E^{\bullet}(X, M)$, as explained in \S\ref{sec:spectral}. 
Under some mild conditions, we identify 
in \S\ref{sec:e1 page} 
the differential $d^1\colon E^1\to E^1$, solely in terms of the 
coalgebra structure of $H_*(X,\k)$ and the $\gr(\k\pi)$-module 
structure on $\gr(M)$.  

\begin{thm}
\label{thm:A}
There is a second-quadrant spectral sequence,  
$\{E^r(X,M),d^r\}_{r\ge 1}$, with 
$E^1_{-p,p+q}(X,M)=H_{q}(X,\gr^p(M))$. 
If $\k$ is a field, or $\k=\Z$ and $H_*(X,\Z)$ is torsion-free, 
then $E^1_{-p,p+q}(X,M)=\gr^p(M) \otimes_{\k} H_{q}(X,\k)$, and 
the $d^1$ differential decomposes as 
\[
\xymatrixcolsep{42pt}
\xymatrixrowsep{10pt}
\xymatrix{ \gr^p(M) \otimes_{\k} H_q \ar^(.4){\id \otimes \nabla_X}[r] 
& \gr^p(M)\otimes_{\k} (H_1\otimes_{\k} H_{q-1}) \ar^{\cong}[d]
\\
& (\gr^p(M)\otimes_{\k} \gr^1(\k\pi))\otimes_{\k} H_{q-1}  
\ar^(.57){\gr(\mu_M) \otimes \id}[r]
& \gr^{p+1}(M) \otimes_{\k}  H_{q-1}\,,
}
\]
where $\nabla_X$ is the comultiplication map on $H_*=H_*(X,\k)$, 
and $\mu_M\colon M\otimes_{\k} \k\pi\to M$ is the multiplication map. 
\end{thm}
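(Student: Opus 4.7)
My plan is to filter the equivariant chain complex by powers of the augmentation ideal, compute the $E^0$ and $E^1$ pages by a direct calculation, and then identify the $d^1$ differential by examining the equivariant boundary modulo $I^2$.

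Set $F_{-p}\, C_\bullet(X,M) = MI^p \otimes_{\k\pi} C_\bullet(\wX,\k)$. This filtration is preserved by the boundary $\tilde\partial$ because $\tilde\partial$ is $\k\pi$-linear and $M$ acts on the right. The associated graded simplifies substantially: since $(g-1)\cdot MI^p \subseteq MI^{p+1}$, the group ring $\k\pi$ acts on $\gr^p(M)$ through the augmentation, so
\[
E^0_{-p,p+q} \;=\; \gr^p(M) \otimes_{\k\pi} C_q(\wX,\k) \;=\; \gr^p(M) \otimes_\k C_q(X,\k),
\]
with $d^0 = \id \otimes \partial_X$. Taking $d^0$-homology yields $E^1_{-p,p+q} = H_q(X,\gr^p(M))$ viewed as a trivial local system, and under the field-coefficient or torsion-free hypothesis the Künneth formula identifies this with $\gr^p(M)\otimes_\k H_q(X,\k)$.

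To compute $d^1$, I pick a cycle $\zeta\in C_q(X,\k)$ and a lift $\tilde\zeta\in C_q(\wX,\k)$. Because $\partial_X(\zeta)=0$, the equivariant boundary $\tilde\partial(\tilde\zeta)$ lies in $I\cdot C_{q-1}(\wX,\k)$; write $\tilde\partial(\tilde\zeta)=\sum_f \alpha_f\cdot f$ with $\alpha_f\in I$. For $m\in MI^p$, one computes $\tilde\partial(m\otimes\tilde\zeta) = \sum_f m\alpha_f\otimes f \in MI^{p+1}\otimes_{\k\pi} C_{q-1}(\wX,\k)$, and since $[m\alpha_f] = \gr(\mu_M)([m]\otimes[\alpha_f])$ by definition of $\gr(\mu_M)$, the second and third arrows of the claimed composition appear immediately. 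What remains is to identify the chain-level assignment $\zeta \mapsto \sum_f [\alpha_f]\otimes f$, viewed after passing to homology as a map $H_q(X,\k)\to \gr^1(\k\pi)\otimes H_{q-1}(X,\k)$, with $\id\otimes \nabla_X$ composed with the Hurewicz-type isomorphism $H_1(X,\k)\cong I/I^2 = \gr^1(\k\pi)$.

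This last identification is the main obstacle. The chain-level map on the left is built from a choice of lift of cells to $\wX$, and is essentially the leading Fox derivative of $\tilde\partial$, whereas $\nabla_X$ is defined intrinsically via the coalgebra structure of $H_*(X,\k)$, for instance through a cellular Alexander--Whitney diagonal approximation $C_\bullet(X,\k) \to C_\bullet(X,\k)\otimes C_\bullet(X,\k)$. I would bridge the two by a chain-level comparison: show directly that the Fox linearization of $\tilde\partial$ and the $(1,q-1)$-component of a chosen cellular diagonal approximation agree up to chain homotopy, using an acyclic-models / uniqueness argument based on the fact that both carry the same reduction modulo the relevant filtration layer. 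Passing to homology in each tensor factor then completes the identification and yields the stated decomposition of $d^1$.
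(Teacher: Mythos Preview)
Your setup of the filtration, the identification of $E^0$ and $E^1$, and the reduction of the $d^1$ computation to the identification of the chain-level ``Fox linearization''
\[
[\zeta]\;\longmapsto\;\sum_f [\alpha_f]\otimes [f]\;\in\; (I/I^2)\otimes_\k H_{q-1}(X,\k)
\]
with the comultiplication $\nabla_X$ are all correct and match the paper. The factorization through $\gr(\mu_M)$ is indeed immediate, and your argument here is essentially the paper's reduction (Lemma~\ref{lem:reduction} and Lemma~\ref{lem:d1lin}) from arbitrary $M$ to the free module $\k\pi$ in degree $p=0$.

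Where you and the paper part ways is exactly at the remaining identification, and this is where your argument has a genuine gap. You propose an acyclic-models comparison between the Fox linearization of $\tilde\partial$ and the $(1,q-1)$-component of a cellular diagonal approximation, but you do not specify the functors, the models, or the acyclicity hypotheses, and the justification ``both carry the same reduction modulo the relevant filtration layer'' is not an acyclic-models argument. The difficulty is real: the Fox linearization is a map $Z_q(X,\k)\to (I/I^2)\otimes_\k C_{q-1}(X,\k)$, while the Alexander--Whitney component lands in $C_1(X,\k)\otimes_\k C_{q-1}(X,\k)$; to compare them on homology you must first put both into a common natural framework, and it is not obvious which uniqueness theorem applies.

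The paper sidesteps this entirely. It dualizes (so that $\nabla_X$ becomes the cup product $\cup_X$), then uses naturality of both $\delta^1$ and $\cup_X$ with respect to maps $X\to K(R,1)\times K(R,q-1)$ classifying a pair of cohomology classes. This reduces the identification to a single explicit computation in the product of two Eilenberg--MacLane spaces, where the equivariant boundary map can be written down by hand (Lemma~\ref{lem:del cup}). The virtue of this approach is that it replaces a delicate chain-level uniqueness argument by a two-line calculation in a universal example; the cost is that one must first check the naturality squares, which is routine. If you want to pursue your route instead, you would need to supply the missing uniqueness argument in full.
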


Full details are given in Theorem~\ref{thm:d1map}. 
In the case when $X$ is of finite type, $d^1$ is determined by the 
cup-product structure in $H^*(X,\k)$ and the map $\gr(\mu_M)$.  

The idea to use powers of augmentation ideals 
to define a second quadrant spectral sequence 
in terms of group presentations  goes back to J.~Stallings \cite{St}. 
For more on the Stallings spectral sequence, see \cite{Gr}, \cite{Wa}. 

In \S\ref{sec:convergence}, we study the convergence 
properties of the spectral sequence from Theorem \ref{thm:A}. 
Under fairly general assumptions, $E^{\bullet}(X,M)$ has an 
$E^{\infty}$ term. Yet, as we show in Example \ref{ex:non conv}, 
there are finite CW-complexes $X$ for which $E^{\bullet}(X,\k\pi)$ 
does not converge. 

\subsection{Base change}
\label{intro:base change}
To obtain more structure in the spectral 
sequence, we restrict in \S\ref{sect:change rings} to a special 
situation. Suppose $\nu\colon \pi\surj G$ is an  
epimorphism onto a group $G$; then the group ring $\k{G}$ 
becomes a right $\k\pi$-module, via extension of scalars.  
The resulting spectral sequence, $E^{\bullet}(X,\k{G}_{\nu})$, 
is a spectral sequence in the category of left $\gr_J(\k{G})$-modules, 
where $J$ is the augmentation ideal of $\k{G}$. In 
Proposition \ref{prop:d1 kg}, we describe the differential 
$d^1_G$, solely in terms of the  induced homomorphism, 
$\nu_*\colon H_1(X,\k) \to H_1(G,\k)$, and the comultiplication 
map, $\nabla_X$.

Note that $H_*(X,\k{G}_{\nu})=H_*(Y,\k)$, where $Y\to X$ is 
the Galois $G$-cover defined by $\nu$. The homology groups 
of $Y$ support two natural filtrations:  $F^{\bullet}$, 
coming from the spectral sequence, and $J^{\bullet}$, 
by the powers of the augmentation ideal. We then have 
an inclusion, $J^k \cdot  H_*(Y,\k) \subseteq F^{k} H_*(Y,\k)$.
Equality holds for $G=\Z$, as we show in Lemma \ref{lem:equal filt},
but in general the two filtrations do not agree, even when $G=\Z^2$.

In \S\ref{sec:completions} we study the more general 
situation when $\nu\colon \pi\surj G$ is an epimorphism 
to an abelian group, making use of the general machinery 
developed by  J.P.~Serre in \cite{Ser}. Assuming $X$ is of 
finite type and $\k$ is a field, the spectral sequence 
$E^{\bullet}(X,\k{G}_{\nu})$ converges, and computes 
the $J$-adic completion of $H_*(X,\k{G}_{\nu})$. Moreover, 
if $X$ is a finite CW-complex, the spectral sequence 
collapses in finitely many steps.  

In the case when $X$ is a $K(\pi,1)$ space, $G=\Z_p$, 
and $\k=\F_p$, discussed separately in \S\ref{sec:kzp}, 
the spectral sequence $E^{\bullet}(X,\k{G}_{\nu})$ is the 
homological version of a spectral sequence first considered 
by A.~Reznikov in \cite{Re}.

\subsection{Monodromy action}
\label{intro:thmB}

In \S\ref{sec:mono}, we focus exclusively on 
infinite cyclic covers, analyzing the homology 
groups $H_{q}(X,\k\Z_{\nu})$, viewed as modules over 
the Laurent polynomial ring, $\k\Z=\k[t^{\pm 1}]$. 
We assume $\k$ is a field, so that  $\k[t^{\pm 1}]$ 
is a PID.    Given an element $a\in H^1(X,\k)$ 
with $a^2=0$, left-multiplication by $a$ turns the 
cohomology ring of $X$ into a cochain complex, 
$(H^*( X,\k),\cdot a)\colon H^0(X,\k) \xrightarrow{a}  
H^1(X,\k) \xrightarrow{a}  H^2(X,\k) \xrightarrow{a} \cdots$. 

\begin{thm}
\label{thm:B}
Let $X$ be a connected, finite-type CW-complex, 
$\nu\colon \pi_1(X)\surj \Z$ an epimorphism, 
and $\nu_{\k}\in H^1(X,\k)$ the corresponding 
cohomology class. Then, for all $q\ge 0$:
\begin{enumerate}
\item \label{B1}
The $\gr_J(\k\Z)$-module structure on $E^{\infty}(X, \k\Z_{\nu})$ 
determines  $P^q_0$ and $P^{q}_{t-1}$, the free and 
$(t-1)$-primary parts of $H_q(X, \k\Z_{\nu})$, viewed 
as $\k[t^{\pm 1}]$-modules. 
\item \label{B2}
The monodromy action of $\Z$ on $P^{j}_0 \oplus 
P^{j}_{t-1}$ is trivial for all $j\le q$ if and only if 
$H^j(H^*( X,\k) , \cdot\nu_{\k})=0$, for all $j\le q$. 
\end{enumerate}
\end{thm}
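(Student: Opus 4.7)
For part~\eqref{B1}, the approach is to apply the structure theorem for finitely generated modules over the PID $\k[t^{\pm 1}]$, together with Lemma~\ref{lem:equal filt}, which in the case $G=\Z$ identifies the spectral-sequence filtration on $H_q(X,\k\Z_{\nu})$ with the $J$-adic filtration. Decompose $H_q(X,\k\Z_{\nu}) = P^q_0 \oplus P^q_{t-1} \oplus Q^q$, where $Q^q$ collects the primary summands for irreducibles other than $t-1$. Since $t-1$ acts invertibly on $Q^q$, we have $\gr_J Q^q = 0$, so the $\gr_J(\k\Z)\cong \k[s]$-module $E^{\infty}(X,\k\Z_{\nu})$ equals $\gr_J P^q_0 \oplus \gr_J P^q_{t-1}$. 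The isomorphisms $\gr_J P^q_0 \cong \k[s]^{\rank P^q_0}$ and $\gr_J(\k[t^{\pm 1}]/(t-1)^k) \cong \k[s]/(s^k)$ then allow one to read off both the rank of $P^q_0$ and the elementary divisors of $P^q_{t-1}$.

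For part~\eqref{B2}, first specialize Theorem~\ref{thm:d1map} to $M=\k\Z_{\nu}$. Because $\gr^p(\k\Z_{\nu})\cong \k$ and $\gr(\mu_M)$ reduces to the map $H_1(X,\k)\to H_1(\Z,\k)=\k$ induced by $\nu$, the $d^1$-differential on each column is exactly cap product with $\nu_{\k}$. Consequently, for $p\ge 1$ and $q\ge 1$,
\[
E^2_{-p,\,p+q}(X,\k\Z_{\nu}) \;=\; H_q\bigl(H_{*}(X,\k),\,\cap\nu_{\k}\bigr) \;\cong\; H^q\bigl(H^{*}(X,\k),\,\cdot\nu_{\k}\bigr)^{\vee},
\]
independently of $p$, the last isomorphism coming from finite-type duality over $\k$. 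The case $q=0$ is automatic from the surjectivity of $\nu$ (which gives $\nu_{\k}\ne 0$), yielding both $E^2_{-p,\,p}=0$ for $p\ge 1$ and $H^0(H^{*},\cdot\nu_{\k})=0$.

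By part~\eqref{B1}, triviality of the monodromy on $P^j_0 \oplus P^j_{t-1}$ for all $j\le q$ is equivalent to $E^{\infty}_{-p,\,p+j}(X,\k\Z_{\nu}) = 0$ for all $p\ge 1$ and $j\le q$. This reformulation makes the forward implication of~\eqref{B2} immediate: $H^j(H^{*},\cdot\nu_{\k})=0$ for $j\le q$ forces $E^2_{-p,\,p+j}=0$ in the relevant range, and the subquotient $E^{\infty}$ then vanishes.

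The backward implication is the main obstacle, since vanishing of $E^{\infty}$ does not a priori imply vanishing of $E^2$. Proceed by induction on $q$, the base case $q=0$ being automatic. For the inductive step, combine the induction hypothesis with the assumption of monodromy triviality to deduce $H^j(H^{*},\cdot\nu_{\k})=0$, and hence $E^2_{-p,\,p+j}=0$, for all $p\ge 1$ and $j\le q-1$. Now focus on the column $p=1$: for $r\ge 2$, the outgoing differential $d^r\colon E^r_{-1,\,1+q}\to E^r_{-1-r,\,q+r}$ lands at a position of total degree $q-1$ with filtration index $1+r\ge 3$, which vanishes by the inductive hypothesis; while the incoming differential would originate at a position with first coordinate $r-1\ge 1>0$, outside the second quadrant, and hence is zero. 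Therefore $E^r_{-1,\,1+q}$ is constant for $r\ge 2$, so $E^2_{-1,\,1+q} = E^{\infty}_{-1,\,1+q} = 0$ by assumption, giving $H^q(H^{*}(X,\k),\cdot\nu_{\k})=0$ and closing the induction.
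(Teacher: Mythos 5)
Your proposal is correct and follows essentially the same route as the paper (Propositions \ref{prop:monospsq} and \ref{prop:trivial action}): part \eqref{B1} via Lemma \ref{lem:equal filt}, convergence, and the structure theorem over the PID $\k[t^{\pm 1}]$; part \eqref{B2} via the identification $E^2_{-p,p+q}\cong H^q(H^*(X,\k),\cdot\nu_{\k})^{\vee}$ for $p\ge 1$, with the forward direction from $E^{\infty}$ being a subquotient of $E^2$ and the converse by the same induction showing $E^2_{-1,1+q}=E^{\infty}_{-1,1+q}$. The only cosmetic remark is that your part \eqref{B1} should explicitly invoke the convergence statement (Proposition \ref{prop:ss kzhat}) before identifying $E^{\infty}$ with $\gr_J H_q(X,\k\Z_{\nu})$.
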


For a more detailed statement, see Propositions \ref{prop:monospsq} 
and \ref{prop:trivial action}. Part \eqref{B2} has an analog, for
$\nu\colon \pi_1(X) \surj \Z_p$ and $\k= \F_p$ ($p\ne 2$); see
Proposition \ref{prop:jordan block}.

Particularly interesting is the case of a smooth manifold $X$ 
fibering over the circle, with $\nu\colon \pi \surj \Z$ the 
homomorphism induced on $\pi_1$ by the projection map, 
$p\colon X\to S^1$.  
The homology of the resulting infinite cyclic cover was studied 
by J.~Milnor in \cite{Mil2}.  This led to another spectral sequence, 
introduced by M.~Farber in \cite{Fa85}, and further developed by  
S.P.~Novikov in \cite{Nov}.  The Farber-Novikov spectral sequence 
has $(E_1, d_1)$-page dual to our $(E^1(X,\k\Z_{\nu}), d^1_{\nu})$-page, 
and higher differentials given by certain Massey products, 
see \cite{Fa}. Their spectral sequence, though, converges 
to the free part of $H_*(X,\k\Z_\nu)$, and thus misses 
the information on the $(t-1)$-primary part captured by the 
equivariant spectral sequence.  The spectral sequence 
$E^{\bullet}(X,\k\Z_{\nu})$ was also investigated by G.~Denham 
in \cite{De}, in the special case when $X$ is the complement 
of a complexified arrangement of real hyperplanes through 
the origin of $\R^{\ell}$, and $p$ is the Milnor fibration. 

Theorem \ref{thm:B} has already found several 
applications in the literature. In \cite{PST}, we analyze the 
monodromy action on the homology of Galois $\Z$--covers, for
toric complexes associated to finite simplicial complexes.
Using Part \eqref{B2} of the Theorem, we obtain a combinatorial 
criterion for the full triviality of this action, up to a given degree.
In \cite{PS08}, Theorem \ref{thm:B}\eqref{B2} yields a new formality 
criterion, and a purely topological proof of a basic result on the 
monodromy action, for the homology of Milnor fibers of plane 
curve singularities.

\subsection{Bounds on twisted Betti ranks}
\label{intro:thmD}
Computing  cohomology groups with coefficients in a 
rank $1$ local system can be an arduous task.  It is 
thus desirable to have efficient, readily computable 
bounds for the  ranks of these groups.  We supply 
such bounds in \S\S\ref{sect:betti bounds}--\ref{sect:aomoto bounds}. 

Given a non-zero complex number $\zeta\in \C^{\times}$ and 
a homomorphism $\nu\colon \pi\to \Z$, define a representation  
$\rho\colon \pi\to \C^{\times}$ by $\rho(g)=\zeta^{\nu(g)}$.  
If $\zeta$ is a $d$-th root of unity, such a homomorphism $\rho$ 
is called a rational character (of order $d$).

\begin{thm}
\label{thm:D}
Let $X$ be a connected, finite-type CW-complex, and let 
$\rho\colon \pi_1(X)\to \C^{\times}$ be a rational character, 
of prime-power order $d=p^r$. Then, for all $q\ge 0$, 
\[
\dim_{\C} H^q(X, {}_{\rho}\C) \le 
\dim_{\F_p} H^q(X,\F_p).
\]
If, moreover, $H_*(X,\Z)$ is torsion-free, 
\[
\dim_{\C} H^q(X, {}_{\rho}\C) \le 
\dim_{\F_p} H^q(H^{*}(X,\F_p),\nu_{\F_p}).
\]
\end{thm}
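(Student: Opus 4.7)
My plan is to base-change the twisted cochain complex to a DVR so as to compare the $\C$-coefficient and $\F_p$-coefficient theories. Specifically, I would let $\Lambda$ be a complete DVR with residue field $\F_p$ and fraction field $K$ containing $\zeta$; concretely, $\Lambda = \Z_p[\zeta]$ with uniformizer $\pi = 1-\zeta$, so that $\rho$ factors through $\Lambda^{\times}$. Then I would consider the twisted cochain complex
\[
C^\bullet = \Hom_{\Lambda\pi_1(X)}(C_*(\wX,\Lambda),\,{}_\rho\Lambda),
\]
a bounded complex of finitely generated free $\Lambda$-modules. The crucial observation is that reduction modulo $\pi$ trivializes $\rho$ (since $\zeta\equiv 1\pmod \pi$), so $C^\bullet\otimes_\Lambda \F_p \cong C^*(X,\F_p)$, while extension of scalars to $\C$ recovers $C^*(X,{}_\rho\C)$.

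For the first inequality, I would write $H^q(C^\bullet)\cong \Lambda^{r_q}\oplus T_q$ with $T_q$ torsion; flatness then gives $\dim_{\C} H^q(X,{}_\rho\C) = r_q$. The Bockstein long exact sequence arising from $0\to C^\bullet\xrightarrow{\pi}C^\bullet\to C^\bullet/\pi C^\bullet\to 0$ shows $\dim_{\F_p}H^q(X,\F_p) = r_q + s_q + s_{q+1}$, where $s_j$ counts the cyclic summands of $T_j$; in particular, $\dim_{\F_p}H^q(X,\F_p) \ge r_q$.

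For the second inequality, I would filter $C^\bullet$ by $F^p C^\bullet = \pi^p C^\bullet$ and study the resulting spectral sequence. Since $\gr_\pi\Lambda$ is a direct sum of copies of $\F_p$, one obtains $E_0^{p,q} = C^{p+q}(X,\F_p)$ with $d_0$ the usual coboundary of $C^*(X,\F_p)$, and thus $E_1^{p,q} = H^{p+q}(X,\F_p)$ in every column $p$. The central step will be identifying $d_1\colon E_1^{p,q}\to E_1^{p+1,q}$ with cup-product by $\nu_{\F_p}\in H^1(X,\F_p)$: this is the cohomological dual of Theorem~\ref{thm:A} applied to $\k=\Z$ and $M={}_\rho\Lambda$, since the expansion $\rho(g)=1+\nu(g)\pi+O(\pi^2)$ reduces the structural map $\gr(\mu_M)\colon \gr^p(M)\otimes\gr^1(\Z\pi_1(X))\to\gr^{p+1}(M)$ to evaluation against $\nu_{\F_p}$, making the homological $d^1$ into cap-product with $\nu_{\F_p}$, dual to cup-product. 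The torsion-freeness of $H_*(X,\Z)$ is precisely what legitimates the tensor-product form of $E^1$ required by Theorem~\ref{thm:A}. Consequently $E_2^{p,q} = H^{p+q}(H^*(X,\F_p),\nu_{\F_p})$, independent of $p$.

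Convergence should follow from completeness of $\Lambda$ and Krull's intersection theorem: the $\pi$-adic filtration on $H^*(C^\bullet)$ is Hausdorff, and each $E_r^{p,q}$ stabilizes as a subquotient of the finite-dimensional $H^{p+q}(X,\F_p)$. For $p$ sufficiently large, $\pi^p$ annihilates $T_q$, so $E_\infty^{p,q-p} \cong \pi^p\Lambda^{r_q}/\pi^{p+1}\Lambda^{r_q} \cong \F_p^{r_q}$; comparing with the $p$-independent $E_2^{p,q-p}=H^q(H^*(X,\F_p),\nu_{\F_p})$, of which $E_\infty^{p,q-p}$ is a subquotient, yields $r_q \le \dim_{\F_p}H^q(H^*(X,\F_p),\nu_{\F_p})$. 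The main obstacle, I expect, will be the $d_1$ identification: transcribing the homological $\gr(\mu_M)$-description from Theorem~\ref{thm:A} into the cohomological cup-product statement requires careful Fox-calculus bookkeeping, and it is precisely here that the torsion-free hypothesis on $H_*(X,\Z)$ enters essentially.
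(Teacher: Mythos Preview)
Your approach is correct and genuinely different from the paper's. The paper proves both inequalities by elementary rank comparisons on the single $\Z[t^{\pm 1}]$-chain complex $C_\bullet(X,\Z\Z_\nu)$: writing $b_q(X,\nu/p^r)=\rank C_q-\rank_\C\partial^\nu_q(\zeta)-\rank_\C\partial^\nu_{q+1}(\zeta)$ and $b_q(X,\F_p)=\rank C_q-\rank_{\F_p}\partial^\nu_q(1)-\rank_{\F_p}\partial^\nu_{q+1}(1)$, it reduces the first inequality to the fact that any minor of $\partial^\nu$ vanishing at $\zeta$ must vanish mod $p$ at $1$ (since $\Phi_{p^r}(1)=p$). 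For the second inequality the paper splits $C_q=Z_q\oplus B_{q-1}$, extracts the block $(t-1)R_q$ of $\partial^\nu_q$, and shows that $R_q(1)\bmod p$ is exactly the matrix of the $d^1$ differential of $E^\bullet(X,\F_p\Z_\nu)$, which Corollary~\ref{cor:d1z} identifies with $\cdot\nu_{\F_p}$; the torsion-free hypothesis enters only to make this block decomposition compatible with the coefficient map $\Z\to\F_p$.

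Your DVR route packages the same information more structurally: the Bockstein argument replaces the minor-by-minor rank estimate, and your $\pi$-adic spectral sequence is (after the identification $MI^n=\pi^n\Lambda$, valid since $\zeta^k-1\in\pi\Lambda$ with equality for $k$ prime to $p$) precisely the $\Lambda$-dual of the paper's $E^\bullet(X,\Lambda_\rho)$ with $\k=\Z$, so the $d_1$ identification does follow from Theorem~\ref{thm:A}. Two small points worth tightening: first, your invocation of Theorem~\ref{thm:A} is for the \emph{chain} complex, and you need the routine observation that term-by-term $\Lambda$-dualization carries the $\pi$-adic filtrations to one another and hence dualizes each $E_r$-page; second, in the totally ramified case $p=2$, $r=1$ one has $e=[\Lambda:\Z_2]=1$, so the expansion of $\delta^\rho$ picks up a Bockstein term $\tfrac{1}{\pi}\delta\tilde z$ at order $\pi$---here the torsion-freeness of $H_*(X,\Z)$ kills that term in cohomology, which is another place the hypothesis is genuinely used in your argument. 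The paper's approach has the virtue of avoiding completions and spectral-sequence convergence altogether; yours makes the role of the Aomoto complex as an $E_2$-page transparent.
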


The proof is given in Theorems \ref{thm:bettibound} and  
\ref{thm:cohobound}. Neither of these two 
inequalities can be sharpened further: we give examples 
showing that both the prime-power order hypothesis on 
$d$ and the torsion-free hypothesis on $H_*(X,\Z)$ 
are really necessary.

The second inequality above generalizes a result of 
D.~Cohen and P.~Orlik (\cite[Theorem 1.3]{CO}), 
valid only when $X$ is the complement 
of a complex hyperplane arrangement, and $r=1$.
This inequality is used in a crucial way in \cite{MP}.
It yields a purely combinatorial description of the monodromy 
action on the degree $1$ rational homology of the Milnor fiber, 
for arbitrary subarrangements of classical Coxeter arrangements.

\subsection{Minimality and linearization}
\label{intro:thmE}
Using the equivariant spectral sequence of a Galois cover, 
we give in \S\ref{sect:minilin}  an intrinsic meaning to
linearization of equivariant (co)chain complexes, in the  
case when the CW-complex $X$ admits a cell structure 
with minimal number of cells in each dimension. 

Pick a basis $\{ e_1,\dots, e_n \}$ for $H_1=H_1(X,\k)$, 
and identify the symmetric algebra $S=\Sym(H_1)$ 
with the polynomial ring $\k [e_1,\dots, e_n]$. The 
{\em universal Aomoto complex} of $H^*=H^*(X,\k)$ 
is the cochain complex of free $S$-modules, 
\[
\xymatrix{H^0 \otimes_{\k} S \ar^{D^0}[r] & 
H^1 \otimes_{\k} S \ar^{D^1}[r] & 
H^2 \otimes_{\k} S \ar^(.6){D^2}[r] & 
\cdots},
\]
with differentials 
$D(\alpha \otimes 1)= \sum_{i=1}^{n} 
e_i^* \cdot \alpha \otimes e_i$. 

\begin{thm}
\label{thm:E}
Let $X$ be a minimal CW-complex, and assume $\k=\Z$, or a field. 
\begin{enumerate}
\item \label{E1}
Let $\nu\colon \pi\surj G$ be an epimorphism. Then 
the linearization of the equivariant chain complex,
$(C_{\bullet}(X,\k G_{\nu}), \tilde{\partial}_{\bullet}^G)$, is
equal to the $E^1$-term of the equivariant spectral sequence,
$(E^1(X,\k G_{\nu}), d_G^1)$. 
\item \label{E2}
The linearization of the equivariant cochain complex 
of the universal abelian cover of $X$ coincides 
with the universal Aomoto complex of $H^*(X,\k)$.
\end{enumerate}
\end{thm}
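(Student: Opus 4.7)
The plan is to exploit minimality of $X$ to put the equivariant chain complex into a rigid form, and then to recognize that the $I$-adic filtration on its coefficients strips off exactly its linear part.

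For part \eqref{E1}, minimality means that $X$ has exactly $b_q=\rank_{\k} H_q(X,\k)$ cells in dimension $q$, which forces the ordinary cellular boundary $\partial_q\colon C_q(X,\k)\to C_{q-1}(X,\k)$ to vanish. Lifting to the universal cover, $C_q(\wX,\k)$ is free of rank $b_q$ over $\k\pi$, and base change along $\nu$ yields $C_q(X,\k G_{\nu})=(\k G)^{b_q}$. The matrix representing $\tilde{\partial}_q^G$, when augmented along $\k G\to \k$, returns $\partial_q=0$; hence every entry lies in the augmentation ideal $J$ of $\k G$. With respect to the filtration $F^p C_q(X,\k G_\nu)=J^p\cdot (\k G)^{b_q}$, this means $\tilde{\partial}_q^G$ strictly increases filtration. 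Therefore $d^0=0$ in the equivariant spectral sequence, and
\[
d^1_G\colon \gr^p(\k G)^{b_q}\to \gr^{p+1}(\k G)^{b_{q-1}}
\]
is induced by the matrix whose $(i,j)$-entry is the image of the $(i,j)$-entry of $\tilde{\partial}_q^G$ in $J/J^2$. Identifying source and target with $\gr^p(\k G)\otimes_{\k} H_q(X,\k)$ and $\gr^{p+1}(\k G)\otimes_{\k} H_{q-1}(X,\k)$ via the description of $E^1$ in Theorem~\ref{thm:A} (and using $H_q\cong C_q$ from minimality), one sees that $d^1_G$ is, by construction, the linearization of $\tilde{\partial}^G_{\bullet}$, proving \eqref{E1}.

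For part \eqref{E2}, the plan is to specialize \eqref{E1} to the abelianization $\nu\colon \pi\surj \pi^{\ab}$, which produces the universal abelian cover, and then dualize the resulting identification via $\Hom_{\k}(-,\k)$ to pass to cochains. For $G=\pi^{\ab}$ abelian, $\gr_J(\k G)$ is naturally isomorphic to the symmetric algebra $S=\Sym(H_1(X,\k))$ (using the torsion-free hypothesis in the statement to identify the generators with the chosen basis $\{e_1,\dots,e_n\}$). By Theorem~\ref{thm:A}, $d^1_G$ is $(\id\otimes \nabla_X)$ followed by multiplication in $\gr_J(\k G)$. Dualizing $\nabla_X$ produces the cup product on $H^*(X,\k)$, and dualizing multiplication by $e_i\in \gr^1$ produces the operation $e_i^{*}\cdot (-)$; combining these, the dual of $d^1_G$ on $H^q\otimes S$ sends $\alpha\otimes 1$ to $\sum_{i=1}^n (e_i^{*}\cdot \alpha)\otimes e_i$, which is precisely the universal Aomoto differential.

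The main obstacle will be bookkeeping: verifying the canonical isomorphism $\gr_J(\k\pi^{\ab})\cong \Sym(H_1(X,\k))$ in the chosen basis (especially when $\k=\Z$, where the torsion-free hypothesis on $H_*(X,\Z)$ is essential to avoid pathologies in the associated graded of the integral group ring), and confirming that the $\Hom$-dual of the two structure maps in Theorem~\ref{thm:A} matches, with the correct signs, the cup product and polynomial multiplication appearing in the universal Aomoto complex.
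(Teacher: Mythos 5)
Your proposal is correct and follows essentially the same route as the paper: minimality forces $\partial=0$, hence the entries of $\Mat(\tilde{\partial}^G_q)$ lie in $J$ and $\tilde{\partial}^G$ raises the $J$-adic filtration, giving $E^1=E^0$ and $d^1_G=\partial^{\linn}_G$ with matrix $\Mat(\tilde{\partial}^G_q)\bmod J^2$; part \eqref{E2} is then obtained by specializing to $\nu=\ab$ and transposing, with Theorem~\ref{thm:A} (Proposition~\ref{prop:d1 kg}) identifying the result with the universal Aomoto differential. The only cosmetic difference is that the paper derives the filtration-raising property via the projection $p_*$ and functoriality (Lemma~\ref{lem:minfilt}) rather than by augmenting the matrix directly, and note that torsion-freeness of $H_*(X,\Z)$ is a consequence of minimality rather than a separate hypothesis.
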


Part \eqref{E1}---suitably interpreted with the aid of 
Theorem \ref{thm:A}---was proved in \cite[Theorem 20]{DP1}, 
in the case when $\nu =\id$ and $\k= \Z$, and under the additional 
assumption that the cohomology ring $H^*(X, \Z)$ is generated 
in degree one. 

Part \eqref{E2}  generalizes \cite[Theorem 1.2]{CO}, valid 
only for complements of complex hyperplane arrangements, 
and $\k=\C$.  Earlier results in this direction, also within the 
confines of arrangement theory, were obtained 
in \cite{CS99} and \cite{C}. The result in Part \eqref{E2} 
was recently proved by M.~Yoshinaga \cite{Yo}, under 
an additional condition, satisfied by arrangement 
complements, but not by arbitrary minimal CW-complexes.  
Theorem \ref{thm:E}\eqref{E2} shows that this condition is 
unnecessary, thereby answering a question raised by 
Yoshinaga in Remark~15, preprint version of \cite{Yo}. 

\section{The equivariant chain complex of a CW-complex}
\label{sec:equivchain}

In this section, we review a well-known construction, going  
back to J.H.C. Whitehead \cite{Wh} and S. Eilenberg \cite{Ei}: 
the chain complex of the universal cover of a cell complex $X$, 
with coefficients in a $\k\pi_1(X)$-module $M$.   We start with 
some basic algebraic notions.  

\subsection{Associated graded rings}
\label{subsec:gring}

Let $R$ be a ring, and $J$ a two-sided ideal. The successive powers 
of $J$ determine a descending filtration on $R$, called the 
{\em $J$-adic filtration}.  The {\em associated graded ring}, 
$\gr_J(R)$, is defined as 
\begin{equation}
\label{eq:gr R}
\gr_J(R)=\bigoplus_{n\ge 0} J^n /J^{n+1}.  
\end{equation}
On homogeneous components, the multiplication map 
$\gr_J^n(R) \otimes \gr_J^m(R) \to \gr_J^{n+m}(R)$ 
is induced from multiplication in the ring $R$. 

We will be mainly interested in the following situation. 
Let $\pi$ be a group, and let $\k$ 
be a commutative ring with unit $1$.  The {\em group ring} 
of $\pi$, denoted  $\k\pi$, consists of all finite linear 
combinations of group elements with coefficients in $\k$.  
Multiplication in $\k\pi$ is induced from the group operation 
on $\pi$; the unit (denoted by $1$) is $1\cdot \iota$, where 
$\iota$ is the identity of $\pi$. The {\em  augmentation ideal}, 
$I=I_{\k}\pi$, is the kernel of the ring homomorphism
$\epsilon \colon \k\pi \to \k$, $\sum n_g g\mapsto \sum n_g$; 
as a $\k$-module, $I$ is freely generated by the elements  
$g-1$, for $g\ne \iota$. 

The augmentation ideal defines the $I$-adic filtration on $\k\pi$; 
let $\gr(\k\pi)=\gr_I(\k\pi)$ be the associated graded ring. 
Note that $\gr(\k\pi)$ is always generated as a ring in 
degree $1$.  Clearly, $\gr^0(\k\pi) \cong \k$.  Moreover, 
the map $\pi \to I$, $g\mapsto g-1$ factors through  
an isomorphism $H_1(\pi,\k)\to I/I^2$ (see \cite{HS}).  Thus,
\begin{equation}
\label{eq:gr1}
\gr^1(\k\pi)\cong H_1(\pi,\k).
\end{equation}

As an example, consider the free abelian group $\pi=\Z^n$. 
The group ring $\k\Z^n$ can be identified with the Laurent 
polynomial ring $\k[t_1^{\pm 1},\dots ,t_n^{\pm 1}]$, while 
$\gr(\k\Z^n)$ can be identified with $\k[x_1,\dots ,x_n]$,  
the polynomial ring in $n$ variables, via the map 
$t_i-1\mapsto x_i$.  

\subsection{Associated graded modules}
\label{subsec:func1}

A similar construction applies to modules: if $M$ is a left 
$\k\pi$-module, then $M$ is filtered by the submodules 
$\{ I^n M \}_{n\ge 0}$.  The associated graded object, 
$\gr_I(M)=\bigoplus_{n\ge 0}  I^n M / I^{n+1} M$, 
inherits a graded module structure over $\gr(\k\pi)$.  
This module is generated in degree $0$ 
by the module of coinvariants, $M/{I M}$.  

All these constructions are functorial.  For example, 
if $\alpha\colon \pi\to \pi'$ is a group homomorphism, 
then the linear extension to group rings, 
$\bar\alpha\colon \k\pi\to \k\pi'$, is a ring map, 
preserving the respective $I$-adic filtrations.  
Thus, $\bar\alpha$ induces a degree $0$ ring homomorphism, 
$\gr(\alpha)\colon \gr(\k\pi)\to \gr(\k\pi')$. 

Given a $\k\pi$-module $M$ and a $\k\pi'$-module $M'$, 
a $\k$-linear map $\phi\colon M\to M'$ is said to be equivariant 
with respect to the ring map $\bar\alpha\colon \k\pi\to \k\pi'$ if 
$\phi(gm)=\alpha(g) \phi(m)$, for all $g\in \pi$ and $m\in M$.  
Such a map $\phi$ preserves $I$-adic filtrations, and thus 
induces a map $\gr(\phi)\colon \gr_I(M)\to \gr_I(M')$, 
equivariant with respect to $\gr(\alpha)$. 

Completely analogous considerations apply to right 
$\k\pi$-modules.  

\subsection{Chains on the universal cover}
\label{subsec:equivchain}
Let $X$ be a connected CW-complex, with skeleta 
$\{X^q\}_{q\ge 0}$.  Up to homotopy, we may assume $X$ has 
a single $0$-cell, call it $e_0$, which we will take as the 
basepoint.  Moreover, we may assume all attaching 
maps $(S^q,*) \to (X^q,e_0)$ are basepoint-preserving. 

Fix a commutative ring $\k$ with unit, and denote by 
$C_{\bullet}(X, \k)=(C_q(X, \k),\partial_q)_{q\ge 0}$ 
the cellular chain complex of $X$, with coefficients in $\k$. 
Recall $C_q(X, \k)=H_q(X^q,X^{q-1},\k)$ is a free $\k$-module, 
with basis indexed by the $q$-cells of $X$. 

Let $p\colon \wX \to X$ be the universal cover.  
The cell structure on $X$ lifts to a cell structure on $\wX$.  
Fixing a lift $\tilde{e}_0\in p^{-1}(e_0)$ identifies 
the fundamental group $\pi=\pi_1(X,e_0)$ with the 
group of deck transformations of $\wX$, which 
permute the cells.  Therefore, we may view 
\begin{equation}
\label{eq:equiv}
C_{\bullet}(\wX, \k)=(C_q(\wX, \k),\tilde{\partial}_q)_{q\ge 0}
\end{equation} 
as a chain complex of left-modules over the group ring $\k\pi$.  
We shall call $C_{\bullet}(\wX,\k)$ the {\em equivariant chain 
complex} of $X$, over $\k$.  

Using the  action of $\pi$ on $\wX$, we may identify 
\begin{equation}
\label{eq:tensor}
C_q(\wX,\k) \cong \k\pi \otimes_\k C_q(X,\k)
\end{equation}
as left $\k\pi$-modules.  Under this identification, 
a basis element of the form $1\otimes e$  from the right hand side 
corresponds to the unique lift $\tilde{e}$ of the $q$-cell $e$ 
sending the basepoint $*\in D^{q}$ to $\tilde{e}_0$. 

These constructions are functorial, in the following 
sense.  Suppose $f\colon X \to X'$ is 
a map between connected  CW-complexes.  
By cellular approximation, we may assume 
$f$ respects the CW-structures; in particular, 
$f(e_0)=e'_0$.  Let 
$f_*\colon C_{\bullet} (X,\k) \to C_{\bullet}(X',\k)$ 
be the induced map on cellular chain complexes, and let 
$f_{\sharp}\colon \pi \to \pi'$ 
be the induced homomorphism on fundamental groups. 
By covering space theory, the map $f$ lifts to a 
cellular map, $\tilde{f}\colon \wX \to \wX'$, 
uniquely specified by the requirement that 
$\tilde{f}(\tilde{e}_0)=\tilde{e}'_0$. The induced 
chain map, 
$\tilde{f}_*\colon C_{\bullet} (\wX,\k) \to C_{\bullet}(\wX',\k)$, 
is equivariant with respect to the ring homomorphism 
$\bar{f}_{\sharp} \colon \k\pi\to \k\pi'$. 

\subsection{The first differentials}
\label{subsec:diff}

Let $\epsilon \colon \k\pi \to \k$ be the augmentation homomorphism.   
Under identification \eqref{eq:tensor}, the induced homomorphism 
$p_*\colon C_{q}(\wX,\k)\to C_{q}(X,\k)$ coincides with 
$\epsilon \otimes \id \colon \k\pi \otimes_{\k} 
C_q(X,\k) \to  \k \otimes_{\k} C_q(X,\k)$.   

The first boundary map, $\tilde{\partial}_1$, is easy to write 
down.  Let $e_1$ be a $1$-cell of $X$.  Recall we assume  
$X$ has a single $0$-cell, $e_0$; thus, $e_1$ is a loop 
at $e_0$, representing an element $x=[e_1]\in \pi$.  
Let $\tilde{e}_1$ be the lift of $e_1$ at $\tilde{e}_0$.  Clearly,
$\tilde{\partial}_1(\tilde{e}_1)= (x-1) \tilde{e}_0$. Thus, 
$\tilde{\partial}_1\colon 
\k\pi \otimes_{\k} C_1(X,\k) \to \k\pi \otimes_{\k} C_0(X,\k)=\k\pi$ 
is given by
\begin{equation}
\label{eq:bdry1}
\tilde{\partial}_1(1 \otimes e_1)  = x-1. 
\end{equation}

The second boundary map, $\tilde{\partial}_2\colon 
\k\pi \otimes_{\k} C_2(X,\k) \to \k\pi \otimes_{\k} C_1(X,\k)$, 
can be written by means of Fox derivatives \cite{Fox}.  
If $\{e^i_1\}_i$ are the $1$-cells of $X$, and 
$e_2$ is a $2$-cell, then 
\begin{equation}
\label{eq:fox der}
\tilde{\partial}_2 (1\otimes e_2) = 
\sum_{i} \Big(\frac{\partial r}{\partial x_i}\Big)^{\phi} \otimes e^i_1,
\end{equation}
where $r$ is the word in the free group $F$ on generators $x_i$, 
determined by the attaching map of the $2$-cell, and 
$\bar{\phi}\colon \k{F} \to \k\pi$ is the extension to group 
rings of the projection map $\phi \colon F\surj \pi$. 
As for the higher boundary maps $\tilde{\partial}_{q}$, 
$q>2$, there is no general procedure for computing 
them, except in certain very specific situations.  

If $X$ is an Eilenberg-MacLane $K(\pi,1)$ space, i.e., if 
$\wX$ is contractible, then 
the augmented chain complex $\widetilde{C}_{\bullet} \colon 
C_{\bullet}(\wX,\k) \to \k$ is a free $\k\pi$-resolution of the 
trivial module $\k$.  For instance, if $T^n=K(\Z^n,1)$ is the $n$-torus, 
then  $\widetilde{C}_{\bullet}$ is the Koszul complex 
over the ring $\k\Z^n=\k[t_1^{\pm 1},\dots ,t_n^{\pm 1}]$.  

\subsection{Building up CW-complexes}
\label{subsec:discuss}

Given a group $\pi$, and a matrix $D$ over $\Z\pi$, it is possible 
to construct a CW-complex $X$ having $D$ as a block 
in one of the equivariant boundary maps in $C_{\bullet}(\wX,\Z)$.  

\begin{example}
\label{ex:construction}
Let $X_0$ be a connected CW-complex, with fundamental 
group $\pi= \pi_1(X_0,e_0)$, and  denote by $X_0\vee S^{n}$ 
the wedge sum of $X_0$ with the $n$-sphere, $n\ge 2$.  
Note that $\pi_1(X_0\vee S^n)=\pi$. The inclusion 
$S^n \inj X_0\vee S^n$ defines an element
$[S^n]\in \pi_n(X_0\vee S^n)$.

Given an element $x$ in $\Z\pi$, construct a new CW-complex, 
$X$, by attaching an $(n+1)$-cell along a map 
$\phi_x\colon S^n \to X_0\vee S^{n}$  
representing $x \cdot [S^n] \in \pi_n(X_0\vee S^n)$.  
Clearly, $\pi_1(X)=\pi_1(X_0)$, and 
$C_{\bullet}(\widetilde{X},\Z)= C_{\bullet}(\wX_0,\Z) \oplus C(n,x)$, 
where $C(n,x)$ denotes the 
elementary chain complex $C_{n+1}\to C_n$, with differential 
$\cdot x\colon \Z\pi\to \Z\pi$. 
\end{example}

More generally, for any integer $n\ge 2$, and any 
$\Z\pi$-linear map  $D\colon (\Z\pi)^{m} \to (\Z\pi)^{\ell}$, 
a construction much as above produces a CW-complex $X$, 
with $\pi_1(X)=\pi_1(X_0)$, and 
$C_{\bullet}(\wX,\Z)= C_{\bullet}(\wX_0,\Z) \oplus C(n,D)$, 
where $C(n,D)$ 
denotes the chain complex concentrated in degrees 
$n+1$ and $n$, with differential equal to $D$. 

\subsection{Coefficient modules}
\label{subsec:modules}

Let $X$ be a connected CW-complex, with fundamental 
group $\pi$. Suppose $M$ is a right $\k\pi$-module.  The cellular  
chain complex of $X$ with coefficients in $M$ is defined as 
\begin{align}
\label{eq:cxm}
C_{\bullet}(X, M)&=\big( C_q(X, M) ,\: \tilde{\partial}^M_q\big)_{q\ge 0} 
\\
\notag &=\big(M \otimes_{\k\pi} C_q(\wX, \k) ,\:  
\id_M  \otimes_{\k\pi}\, \tilde{\partial}_q\big)_{q\ge 0} . 
\end{align}
 In the particular case when $M$ is the free $\k\pi$-module 
of rank one, $C_{\bullet}(X, \k\pi)$ coincides with the 
equivariant chain complex $C_{\bullet}(\wX, \k)$.  

Similarly, if $M$ is a left $\k\pi$-module, define 
the cellular cochain complex of $X$ with 
coefficients in $M$ as 
\begin{align}
\label{eq:cochains}
C^{\bullet}(X, M)&=\big( C^q(X, M) ,\: \tilde{\delta}^q_M\big)_{q\ge 0} 
\\
\notag &=\big(\Hom_{\k\pi} (C_q(\wX, \k),M) ,\:  
\Hom_{\k\pi}( \tilde{\partial}_{q}, M)
\big)_{q\ge 0} . 
\end{align}

\begin{example}
\label{ex:locsyst}
Let $\k$ be a field, and let $\rho\colon \pi\to \GL(n,\k)$ be 
a linear representation of $\pi$.  Then $M=\k^n$ 
acquires a right module structure over $\k\pi$, 
via $mg = \rho(g^{-1})(m)$; such a module is called a 
rank $n$ local system on $X$, with monodromy $\rho$.  
We write $H_*(X, {}_{\rho}\k^n):=H_*(C_{\bullet}(X, M))$ 
for the homology of $X$ with coefficients in this local system. 
When $n=1$, there is no need to turn a representation into an
anti-representation; in this case, we simply define 
$H_*(X, \k_{\rho})$ to be $H_*(C_{\bullet}(X, M))$, where
$M=\k$ is viewed as a {\em right} $\k\pi$-module, via
$mg = \rho(g)(m)$.
\end{example}

\begin{example}
\label{ex:covers}
Let $Y\to X$ be a Galois cover, with group of deck transformations 
$G$ and classifying map $\nu\colon \pi\to G$.   The cell structure 
on $X$ lifts in standard fashion to a cell structure on $Y$.  
Let $\k{G}_{\nu}$ denote the group-ring of $G$, viewed as 
a right $\k\pi$-module, via $h\cdot g =h\nu(g)$.  
Similarly, let ${}_{\nu}\k{G}$ denote 
the same group-ring, viewed as a left $\k\pi$-module, via 
$g\cdot h =\nu(g) h$.
Then $C_{\bullet}(Y,\k)=C_{\bullet}(X,\k{G}_{\nu})$, 
as chain complexes of left $\k{G}$-modules, and 
$C^{\bullet}(Y,\k)=C^{\bullet}(X,{}_{\nu}\k{G})$, 
as cochain complexes of right $\k{G}$-modules.

Notable is the case of the universal abelian 
cover, $X^{\ab} \to X$, defined by the abelianization map,  
$\ab\colon \pi \surj \pi_{\ab}$. The homology groups 
$H_q(X^{\ab},\k)$, viewed as modules over the ring 
$\Lambda=\k\pi_{\ab}$, are called the {\em Alexander 
invariants} of $X$, while the boundary maps 
$\widetilde{\partial}^{\ab}_q= 
\id_{\Lambda} \otimes_{\k\pi} \widetilde{\partial}_q$ 
are called the {\em Alexander matrices} of $X$. 
\end{example}

\section{The equivariant spectral sequence}
\label{sec:spectral}

We now set up the spectral sequence associated to the 
$I$-adic filtration on the equivariant chain complex 
$C_{\bullet}(X, M)$, and analyze some of its properties. 

\subsection{A spectral sequence}
\label{subsec:ss}
We use  \cite{CE} and \cite{Sp} as standard references 
for spectral sequences. 
Given an increasing filtration $F_{\bullet}=\{F_{-n}\}_{n\ge 0}$  
on a chain complex $C_{\bullet}=(C_q,\partial_q)$ over a coefficient 
ring $\k$, there is a spectral sequence 
$E^{\bullet}=\{E^r_{s,t}, d^r\}_{r\ge 1}$. 
The $E^1$ term is defined as $E^1_{s,t}=H_{s+t}(F_s/F_{s-1})$, 
while the $d^1$ differential is the boundary operator in the homology 
exact sequence associated to the triple $(F_s,F_{s-1},F_{s-2})$. 
Each term $E^r$ is a bigraded $\k$-module, the differentials 
 $d^r$ have bidegree $(-r,r-1)$, and $E^{r+1}=H(E^r, d^r)$.  

Now let $X$ be a connected CW-complex as in \S\ref{subsec:equivchain}, 
with fundamental group $\pi=\pi_1(X)$, and augmentation 
ideal $I=I_{\k}(\pi)$. Let $M$ be a right $\k\pi$-module, 
and let $C_{\bullet}(X, M)$ be the cellular chain complex 
of $X$ with coefficients in $M$.

The $I$-adic filtration on $M$ yields a descending filtration, 
$F^0\supset F^1 \supset F^2 \supset \cdots$, 
on $C_{\bullet}(X, M)$.  The $n$-th term of this filtration is 
given by
\begin{equation}
\label{eq:filtcxm}
F^n(C_{\bullet}(X, M))= M\cdot I^n \otimes_{\k\pi} C_{\bullet}(\wX,\k).
\end{equation}
Clearly, the differentials $\tilde{\partial}^M$ of $C_{\bullet}(X, M)$ 
preserve this filtration. 
Set $F_{-n}  = F^n$.  Then $F_{\bullet}=\{F_{-n}\}_{n\ge 0}$ 
is an increasing filtration on $C_{\bullet}(X,M)$, bounded 
above by $F_{0}= C_{\bullet}(X,M)$.  

\begin{definition}
\label{def:ss}
The {\em equivariant spectral sequence} of the CW-complex 
$X$, with coefficients in the $\k\pi_1(X)$-module $M$ 
is the spectral sequence associated to the $I$-adic 
filtration $F_{\bullet}$ on the chain complex $C_{\bullet}(X,M)$,
\begin{equation*}
\label{eq:sscw}
\{E^r(X,M),d^r\}_{r\ge 0},
\end{equation*}
with differentials $d^r \colon E^r_{s,t} \to E^r_{s-r,t+r-1}$. 
\end{definition}

In order to analyze this spectral sequence, we need some 
preliminary facts.   

\subsection{The associated graded chain complex}
\label{subsec:asscc}
 Under the identification 
from \eqref{eq:tensor}, the terms of the chain 
complex \eqref{eq:cxm} can be written as
\begin{equation}
\label{eq:equiv again}
C_{q}(X,M)=M\otimes_{\k} C_{q}(X,\k).
\end{equation}
The $I$-adic filtration on this chain complex is then 
given by 
\begin{equation}
\label{eq:filtcxm again}
F^n(C_{\bullet}(X, M))= M\cdot I^n \otimes_{\k} C_{\bullet}(X,\k).
\end{equation}

Clearly, $F^n/F^{n+1}=\gr^n(M)\otimes_{\k} C_{\bullet}(X, \k)$, 
where  $\gr(M)=\bigoplus_{n\ge 0} M I^n / M I^{n+1}$. 
Now recall that the boundary maps in $C_{\bullet}(X,M)$ 
have the form $\tilde{\partial}^M=\id_M \otimes_{\k\pi} \tilde{\partial}$. 

\begin{lemma}
\label{lem:grd}
For each $q\ge 0$, we have 
$\gr(\id_M \otimes_{\k\pi} \tilde{\partial}_q)= 
\id_{\gr(M)} \otimes_{\k} \partial_q$.  
\end{lemma}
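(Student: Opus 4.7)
The plan is to reduce the statement to a short bookkeeping computation in a chosen basis, using the fact that the $\k\pi$-entries of the matrix of $\tilde{\partial}_q$ become their augmentations (elements of $\k$) after passing to $I/I^2$, and that higher-order corrections are absorbed when the next filtration stage is quotiented out.

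First I would fix a $\k$-basis $\{e_{\alpha}\}$ of the free $\k$-module $C_q(X,\k)$. Via the identification \eqref{eq:tensor}, $\{\tilde{e}_{\alpha}:=1\otimes e_{\alpha}\}$ is a $\k\pi$-basis of $C_q(\wX,\k)$, so I may write
\[
\tilde{\partial}_q(\tilde{e}_{\alpha}) \;=\; \sum_{\beta} c_{\alpha\beta}\, \tilde{e}_{\beta}, \qquad c_{\alpha\beta}\in \k\pi .
\]
Applying the projection $p_*=\epsilon\otimes \id$ from \S\ref{subsec:diff} entrywise recovers the ordinary cellular boundary: $\partial_q(e_{\alpha})=\sum_{\beta}\epsilon(c_{\alpha\beta})\,e_{\beta}$.

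Next, under the identification \eqref{eq:equiv again}, the tensored boundary acts by $(\id_M\otimes_{\k\pi}\tilde{\partial}_q)(m\otimes e_{\alpha})=\sum_{\beta} m c_{\alpha\beta}\otimes e_{\beta}$. To compute the induced map on the associated graded at filtration level $n$, I take $m\in MI^n$ and reduce modulo $MI^{n+1}\otimes_{\k}C_{q-1}(X,\k)$. Splitting
\[
c_{\alpha\beta} \;=\; \epsilon(c_{\alpha\beta}) + \bigl(c_{\alpha\beta}-\epsilon(c_{\alpha\beta})\bigr),
\]
the second summand lies in $I$, so $m\bigl(c_{\alpha\beta}-\epsilon(c_{\alpha\beta})\bigr)\in MI^{n}\cdot I \subseteq MI^{n+1}$ and vanishes in the quotient. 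Thus $mc_{\alpha\beta}\equiv \epsilon(c_{\alpha\beta})\,m \pmod{MI^{n+1}}$, and the induced map sends $\bar{m}\otimes e_{\alpha}$ to $\sum_{\beta}\bar{m}\otimes \epsilon(c_{\alpha\beta})e_{\beta}=\bar{m}\otimes \partial_q(e_{\alpha})$, which is exactly $(\id_{\gr(M)}\otimes_{\k}\partial_q)(\bar{m}\otimes e_{\alpha})$.

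There is no real obstacle here; the argument is essentially a routine verification. The only mild care needed is to keep straight which tensor products are over $\k\pi$ and which are over $\k$ when switching between \eqref{eq:tensor} and \eqref{eq:equiv again}, and to remember that $\epsilon(c_{\alpha\beta})\in \k$ is central, so that $m\epsilon(c_{\alpha\beta})=\epsilon(c_{\alpha\beta})m$ and the result can be repackaged as $\id_{\gr(M)}\otimes_{\k}\partial_q$.
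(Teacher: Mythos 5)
Your proposal is correct and is essentially the paper's own argument, just written out in a basis: the key point in both is that $\tilde{\partial}_q(1\otimes e)-1\otimes\partial_q(e)$ lies in $I\otimes_{\k}C_{q-1}(X,\k)$ (your ``$c_{\alpha\beta}-\epsilon(c_{\alpha\beta})\in I$''), so that multiplying by $m\in MI^n$ pushes the error term into $MI^{n+1}\otimes_{\k}C_{q-1}(X,\k)$. No gaps; the coordinate bookkeeping adds nothing beyond the paper's coordinate-free version but is equally valid.
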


\begin{proof}
Let $e$ be a $q$-cell of $X$.  
Then 
\[
p_* \tilde{\partial}_q (1\otimes e)= \partial_q p_*  (1\otimes e) = 
 \partial_q(e)= p_* (1 \otimes \partial_q e).
\]
Hence, $\tilde{\partial}_q (1 \otimes e)  - 
1\otimes  \partial_q (e)$ belongs to $I\otimes_{\k} C_{q-1}(X,\k)$. 

Now let $m$ be an arbitrary element in $M\cdot  I^n$. 
By the above,
\[
\big ( \id_{M}\otimes_{\k \pi} \tilde{\partial}_q \big )
( m \otimes e ) - m \otimes \partial_q (e) \in 
M \cdot I^{n+1} \otimes_{\k} C_{q-1}(X,\k).
\]
The conclusion follows at once. 
\end{proof}

It follows that the graded chain complex 
associated  to filtration \eqref{eq:filtcxm again} has the form
\begin{equation}
\label{eq:grequiv}
\gr(C_{\bullet}(X,M))=(\gr(M) \otimes_\k C_q(X,\k),
\id_{\gr(M)} \otimes\, \partial_q)_{q\ge 0}. 
\end{equation} 
This chain complex is typically much easier to handle 
than the equivariant chain complex \eqref{eq:cxm}. 
Here is an illustration.

\begin{example}
\label{ex:grm easy}
Consider a local system $M$ as in Example \ref{ex:locsyst}.  
Suppose there is an element $g\in \pi$ such that $\rho(g)$ 
does not admit $1$ as an eigenvalue---this happens whenever 
$M$ is a non-trivial, rank $1$ local system.  Then clearly $MI=M$.  
Thus, $\gr(M)=0$ and $\gr(C_{\bullet}(X,M))$ is the 
zero complex in this instance.
\end{example}

\begin{remark}
\label{rem:grlin}
In the case when $M=\k\pi$, the $I$-adic filtration 
on $C_{\bullet}(X,M)=C_{\bullet}(\wX,\k)$ is simply 
$C_{\bullet}(\wX,\k) \supseteq I \cdot C_{\bullet}(\wX,\k)  
\supseteq \cdots  \supseteq I^n \cdot C_{\bullet}(\wX,\k) 
\supseteq \cdots$, 
while the associated graded chain complex takes the form
\begin{equation}
\label{eq:grcoeff}
\gr(C_{\bullet}(\wX,\k))=(\gr (\k\pi) \otimes_\k C_q(X,\k),
\id_{\gr (\k\pi)} \otimes\, \partial_q)_{q\ge 0}.
\end{equation} 
Notice that the differentials in this chain complex 
are $\gr(\k\pi)$-linear.   
\end{remark}

\subsection{The first pages}
\label{subsec:e0e1}

The $E^0$ term of the equivariant spectral sequence of $X$ 
with coefficients in $M$ is defined in the usual manner, as the 
associated graded of the filtration $F_{\bullet}$ on 
$C_{\bullet}(X,M)$. Using \eqref{eq:grequiv}, we find:   
\begin{equation}
\label{eq:e0}
E^0_{-p,q}(X,M)= \gr^{p} (M) \otimes_{\k} C_{q-p}(X,\k) ,
\end{equation}
if $p\ge 0$ and $q\ge p$, and otherwise $E^0_{-p,q}(X,M)=0$.
Hence, the spectral sequence is concentrated in the second quadrant.
Under this identification, and as a consequence of Lemma \ref{lem:grd}, 
the differential $d^0\colon E^0_{-p,q}(X,M) \to E^0_{-p,q-1}(X,M)$ 
takes the form $d^0= \id\otimes\, \partial$. Hence, 
\begin{equation}
\label{eq:e1}
E^1_{-p,q}(X,M)= H_{q-p}(X, \gr^{p} (M) ).
\end{equation}

\subsection{Functoriality properties}
\label{subsec:func}

Let $f\colon (X,e_0) \to (X',e'_0)$ be a cellular 
map, and suppose $\phi\colon M\to M'$ is  
equivariant with respect to $f_{\sharp}$. 
The resulting $\k$-linear map, 
\begin{equation}
\label{eq:phistar}
\phi \otimes \tilde{f}_* \colon C_{\bullet}(X, M)= 
M \otimes_{\k\pi} C_{\bullet}(\wX,\k) 
\to M' \otimes_{\k\pi'} C_{\bullet}(\wX',\k)= C_{\bullet}(X', M'), 
\end{equation}
is a chain map, preserving $I$-adic filtrations.  Consequently, 
$\phi \otimes \tilde{f}_*$ induces a morphism between 
the corresponding $I$-adic spectral sequences, 
\begin{equation}
\label{eq:morfismos}
E^r(\phi \otimes \tilde{f}_*)\colon E^r(X,M) \to E^r(X',M').
\end{equation}

In particular, $\phi \otimes \tilde{f}_*$ induces a chain map between 
the associated graded chain complexes,  
\begin{equation}
\label{eq:grftilde}
\gr(\phi \otimes \tilde{f}_*)\colon 
\gr(M)\otimes_{\k} C_{\bullet}(X,\k) \to 
\gr(M') \otimes_{\k}  C_{\bullet}(X',\k).
\end{equation}

\begin{lemma}
\label{lem:grf}
For each $q\ge 0$, we have 
$\gr(\phi \otimes \tilde{f}_q)=\gr(\phi ) \otimes f_q$.
\end{lemma}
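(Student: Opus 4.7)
The plan is to argue in close parallel with the proof of Lemma~\ref{lem:grd}: show that the lifted chain map $\tilde{f}_q$ agrees with the unlifted map $f_q$ modulo the augmentation ideal $I'=I_{\k}(\pi')$, and then combine this with equivariance of $\phi$ to verify the asserted formula on a generator of $F^n/F^{n+1}$.

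First, I would choose a $q$-cell $e$ of $X$ and use the identifications from \eqref{eq:tensor} and \eqref{eq:equiv again} to treat $m\otimes e\in M\otimes_{\k} C_q(X,\k)$ as the image of $m\otimes \tilde{e}\in M\otimes_{\k\pi}C_q(\wX,\k)$ under the identification. Applying the projection $p'_{*}$ to $\tilde{f}_q(\tilde e)\in C_q(\wX',\k)=\k\pi'\otimes_{\k} C_q(X',\k)$ yields $f_q(e)$, since $p'\circ\tilde{f}=f\circ p$. Hence we may write
\[
\tilde{f}_q(\tilde{e}) \;=\; 1\otimes f_q(e) \;+\; r,\qquad r\in I'\otimes_{\k} C_q(X',\k),
\]
exactly as in the proof of Lemma~\ref{lem:grd}.

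Next, take $m\in M\cdot I^n$. Equivariance of $\phi$ with respect to $\bar{f}_{\sharp}\colon\k\pi\to\k\pi'$, together with the obvious fact that $\bar{f}_{\sharp}(I)\subseteq I'$, gives $\phi(m)\in M'\cdot (I')^n$. Therefore
\[
(\phi\otimes \tilde{f}_q)(m\otimes\tilde{e}) \;=\; \phi(m)\otimes\bigl(1\otimes f_q(e)\bigr) \;+\; \phi(m)\otimes r.
\]
The first term corresponds to $\phi(m)\otimes f_q(e)\in M'\otimes_{\k} C_q(X',\k)$ under the standard identification, while in the second term, writing $r=\sum_j i'_j\otimes e'_j$ with $i'_j\in I'$, we have
\[
\phi(m)\otimes r \;=\; \sum_j \phi(m)i'_j \otimes e'_j \;\in\; M'\cdot (I')^{n+1}\otimes_{\k} C_q(X',\k) \;=\; F^{n+1}.
\]
Consequently $(\phi\otimes\tilde{f}_q)(m\otimes e)\equiv \phi(m)\otimes f_q(e) \pmod{F^{n+1}}$, which is precisely the statement that $\gr(\phi\otimes\tilde{f}_q)=\gr(\phi)\otimes f_q$ on the homogeneous component of degree $-n$.

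There is no real obstacle here; the only step that requires care is bookkeeping across the two tensor-product identifications $M\otimes_{\k\pi}C_q(\wX,\k)\cong M\otimes_{\k} C_q(X,\k)$, and keeping track of the fact that both $\phi$ and $\tilde f$ contribute the shift in filtration degree through, respectively, $\bar f_\sharp(I)\subseteq I'$ and $\tilde f_q(\tilde e) \equiv 1\otimes f_q(e)\pmod{I'}$. Once these two facts are in hand, the conclusion is immediate.
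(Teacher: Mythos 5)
Your proof is correct and follows essentially the same route as the paper's: both reduce to the observation that $\tilde{f}_q(\tilde{e})-1\otimes f_q(e)$ lies in $I'\cdot C_q(\wX',\k)$ (which you justify, as in Lemma~\ref{lem:grd}, via $p'\circ\tilde f=f\circ p$), and then use $\phi(m)\in M'\cdot(I')^n$ to push the error term into $F'^{n+1}$. The extra bookkeeping you supply (writing out $r$ and checking $\bar f_\sharp(I)\subseteq I'$) is exactly what the paper leaves implicit.
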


\begin{proof}
It is readily seen that, under the identification 
$\gr^0(\k\pi) =\k$, the map $\gr^0(\tilde{f}_q)$ 
corresponds to the map $f_q\colon C_q(X,\k)\to  C_q(X',\k)$.   

Let $m$ be an element of $MI^n$, and let $e$ be a 
$q$-cell of $X$, with lift $\tilde{e}$. By the above, 
\[
\tilde{f}_q(\tilde{e}) - 1\otimes f_q(e)\in I'\cdot C_q(\wX',\k).
\]
Hence,  $(\phi \otimes \tilde{f}_q) (m\otimes \tilde{e})=  
\phi(m)\otimes \tilde{f}_q (\tilde{e})$ equals  
$\phi(m)\otimes f_q(e)$ modulo $F'^{n+1}$. 
The conclusion follows.
\end{proof}

As a simple example, take $f=\id_X$.  Then any morphism 
$\phi\colon M\to M'$ of right $\k\pi$-modules induces 
a morphism of spectral sequences, $E^{\bullet}(\phi)\colon 
E^{\bullet}(X,M)\to E^{\bullet}(X,M')$, with 
$E^0(\phi)=\gr(\phi)\otimes \id$. 

\subsection{Homotopy invariance}
\label{subsec:hinv}

Let $f\colon (X,e_0)\to (X',e'_0)$ be a cellular homotopy 
equivalence, and let $M$ be a right $\k\pi$-module. 
Use the isomorphism $f_{\sharp}\colon \pi\to \pi'$ to 
view $M$ as a right $\k\pi'$-module. 

\begin{corollary}
\label{cor:uniqueness}
The spectral sequences $\{E^r(X,M)\}_{r\ge 1}$ and 
$\{E^r(X',M)\}_{r\ge 1}$ are isomorphic. 
\end{corollary}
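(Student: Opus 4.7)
The plan is to bootstrap from Whitehead's theorem on equivariant chain-homotopy equivalences (recalled in \S\ref{intro:eq chain}) via the functoriality developed in \S\ref{subsec:func}, then invoke the standard filtered-chain-homotopy argument for spectral sequences.

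First, by Whitehead's theorem, since $f\colon X\to X'$ is a cellular homotopy equivalence, the induced map $\tilde{f}_*\colon C_\bullet(\wX,\k)\to C_\bullet(\wX',\k)$ is an equivariant chain-homotopy equivalence. Concretely, using $f_\sharp\colon\pi\isom\pi'$ to identify $\k\pi$ with $\k\pi'$, there exist an equivariant chain map $\tilde{g}_*$ in the opposite direction and equivariant chain homotopies $\tilde{s}$, $\tilde{s}'$ witnessing $\tilde{g}_*\tilde{f}_*\simeq\id$ and $\tilde{f}_*\tilde{g}_*\simeq\id$. All four of these maps are $\k\pi$-linear.

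Next, apply $M\otimes_{\k\pi}-$. By \eqref{eq:phistar}, the chain maps $\id_M\otimes\tilde{f}_*$ and $\id_M\otimes\tilde{g}_*$ give mutually inverse (up to homotopy) chain maps $C_\bullet(X,M)\leftrightarrows C_\bullet(X',M)$, and the homotopies $\id_M\otimes\tilde{s}$, $\id_M\otimes\tilde{s}'$ realize this. Crucially, because $\tilde{f}_*$, $\tilde{g}_*$, $\tilde{s}$, $\tilde{s}'$ are all $\k\pi$-linear, their tensor products with $\id_M$ automatically preserve the $I$-adic filtration \eqref{eq:filtcxm}: they send $M I^n\otimes_{\k\pi}C_\bullet(\wX,\k)$ into the corresponding subcomplex on the other side. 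Thus we have a chain-homotopy equivalence of filtered chain complexes in which both the maps and the homotopies respect the filtrations.

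Finally, passing to the associated graded complexes, $\gr(\id_M\otimes\tilde{f}_*)$ and $\gr(\id_M\otimes\tilde{g}_*)$ become mutually inverse chain-homotopy equivalences of $\gr(C_\bullet(X,M))$ and $\gr(C_\bullet(X',M))$ (apply $\gr$ to the filtration-respecting homotopy identities). Consequently, they induce mutually inverse isomorphisms on $E^1$. Since the map $E^\bullet(\id_M\otimes\tilde{f}_*)$ of \eqref{eq:morfismos} commutes with all differentials $d^r$, a routine induction on $r$ propagates the isomorphism to every subsequent page, yielding $E^r(X,M)\cong E^r(X',M)$ for all $r\ge 1$.

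The only mildly delicate point is ensuring that the $\k\pi$-linear chain homotopies provided by Whitehead's theorem really do descend through the tensor product to filtration-preserving homotopies; but this is immediate from $\k\pi$-linearity and the explicit form \eqref{eq:filtcxm} of the filtration, so no serious obstacle remains.
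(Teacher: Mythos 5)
Your argument is correct, but it reaches the key $E^1$-isomorphism by a genuinely different route from the paper. The paper's proof is shorter and more elementary: viewing $\id_M$ as equivariant with respect to $f_{\sharp}$, it combines Lemma \ref{lem:grf} with the identification \eqref{eq:e1} to see that $E^1(\id_M\otimes\tilde{f}_*)$ is literally the map $H_*(f)\colon H_*(X,\gr(M))\to H_*(X',\gr(M))$ induced by $f$ on ordinary homology with coefficients in the $\k$-module $\gr(M)$; homotopy invariance of cellular homology then gives the isomorphism on $E^1$, and the standard induction via $E^{r+1}=H(E^r,d^r)$ propagates it to all pages. You instead import the full strength of the equivariant Whitehead theorem --- that $\tilde{f}_*$ is an equivariant chain-homotopy equivalence --- and observe that the $\k\pi$-linear homotopy inverse and homotopies, after applying $M\otimes_{\k\pi}(-)$, preserve the filtration \eqref{eq:filtcxm}. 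That observation is sound: since $C_{\bullet}(\wX,\k)$ is $\k\pi$-free, $M I^n\otimes_{\k\pi}C_{\bullet}(\wX,\k)$ is carried into the corresponding term on the other side, using that $f_{\sharp}$ identifies $I^n$ with $I'^n$; and since all maps involved preserve the filtration, the homotopy identities descend to the associated graded complexes, giving mutually inverse chain-homotopy equivalences on $E^0$, hence an isomorphism on $E^1$, after which you finish exactly as the paper does. Your route buys somewhat more --- an honest filtered chain-homotopy equivalence rather than just page-by-page isomorphisms --- but at the cost of invoking the heavier Whitehead machinery (plus, strictly speaking, a base-change remark to pass from $\Z$ to general $\k$), whereas the paper needs only the elementary Lemma \ref{lem:grf} and homotopy invariance of ordinary homology. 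Both proofs are valid.
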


\begin{proof}
View $\id_M$ as an equivariant map with respect 
to $f_{\sharp}$. Using Lemma \ref{lem:grf} and 
identification \eqref{eq:e1}, we see that 
the map $E^1(\id_M \otimes \tilde{f}_*)$ 
coincides with the  induced homomorphism 
$H_*(f)\colon  H_*(X,\gr(M)) \to H_*(X',\gr(M))$. 
Hence, the maps $E^r(\id_M \otimes \tilde{f}_*)$ 
are isomorphisms, for all $r\ge 1$.
\end{proof}

\begin{corollary}
\label{cor:cw space}
Let $X$ be a path-connected topological space having the 
homotopy type of a CW-complex, and let $M$ be a 
right $\k\pi_1(X)$-module. Then there is a well-defined 
second quadrant $I$-adic spectral sequence 
$\{ E^r (X,M), d^r \}$, starting at $r=1$. 
\end{corollary}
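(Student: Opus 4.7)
My plan is to define the spectral sequence via a choice of CW-model and then invoke Corollary \ref{cor:uniqueness} to show the definition is independent of that choice. By hypothesis, there exist a CW-complex $X'$ and a homotopy equivalence $h\colon X' \to X$. The induced isomorphism $h_\sharp\colon \pi_1(X') \isom \pi_1(X)$ allows me to pull back the right $\k\pi_1(X)$-module $M$ to a right $\k\pi_1(X')$-module, and I set
\[
\{E^r(X,M),d^r\}_{r\ge 1} := \{E^r(X',M),d^r\}_{r\ge 1},
\]
using Definition \ref{def:ss}. That this spectral sequence lives in the second quadrant is immediate from \eqref{eq:e0}.

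To verify well-definedness up to isomorphism, let $h''\colon X'' \to X$ be a second CW-model. The composition of $h$ with a homotopy inverse of $h''$ is a homotopy equivalence $X' \to X''$, and by cellular approximation I can replace it within its homotopy class by a cellular map $f$. Since $f_{\sharp}$ is, under the canonical identifications, compatible with the two pullback structures on $M$, Corollary \ref{cor:uniqueness} supplies an isomorphism $E^r(X',M) \cong E^r(X'',M)$ for every $r \ge 1$.

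The main obstacle, such as it is, is merely the careful bookkeeping of $\k\pi_1$-module structures across different CW-models; no new mathematical content is required, since the construction of the spectral sequence (Definition \ref{def:ss}) and its homotopy invariance (Corollary \ref{cor:uniqueness}) have already been established.
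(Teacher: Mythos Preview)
Your proposal is correct and is precisely the intended argument: the paper states this corollary without proof, as it follows immediately from Corollary~\ref{cor:uniqueness} by choosing a CW-model and checking independence of that choice. Your write-up makes explicit the bookkeeping (pulling back the module structure, cellular approximation of the comparison map) that the paper leaves to the reader.
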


\section{Identifying the $d^1$ differential}
\label{sec:e1 page}

Throughout this section, $X$ 
is a CW-complex with a single $0$-cell $e_0$ and 
with basepoint-preserving attaching maps, 
$\pi=\pi_1(X,e_0)$ is the fundamental group, and 
$C_{\bullet}(X,M)$ is the cellular chain complex,  
with coefficients in a right $\k\pi$-module $M$. 
Our goal is to identify the first page of the equivariant 
spectral sequence of $X$, in terms of the coalgebra   
structure of $H_*(X,\k)$ and the module structure of $M$. 

\subsection{The first page}
\label{subsec:e1 page} 
Let $\{ E^r (X,M),d^r\}_{r\ge 1}$ be the spectral sequence 
from Definition \ref{def:ss}. We wish to analyze the differentials 
\begin{equation}
\label{eq:dq diff}
d^1\colon E^1_{-p,q}(X,M) \to E^1_{-p-1,q}(X,M), 
\end{equation}
for all $p\ge 0$ and $q\ge p$. Schematically, the $E^1$ page 
looks like
\[
\xymatrixrowsep{6pt}
\xymatrix{
E^1_{-2,2} &  E^1_{-1,2} \ar_(.4){d^1}[l] & E^1_{0,2} 
\ar_(.4){d^1}[l]\\
&  E^1_{-1,1} & E^1_{0,1} \ar_(.4){d^1}[l]\\
& & E^1_{0,0}
}
\]

To identify these maps in terms of cohomological data, we need to 
assume the following:  Either $\k=\Z$ and $H_*(X,\Z)$ is torsion-free, 
or $\k$ is a field.  Using this assumption, formula \eqref{eq:e1}, 
and the Universal Coefficients Theorem, we find
\begin{equation}
\label{eq:e1 new}
E^1_{-p,q}(X,M)= \gr^{p} (M) \otimes_{\k} H_{q-p}(X,\k), 
\end{equation}
for each $p\ge 0$ and $q\ge p$.  

The $d^1$ differentials enjoy the following naturality property. 
Suppose $f\colon X\to X'$ is a cellular map, and $\phi\colon M\to M'$ 
is a morphism of modules, equivariant with respect to 
$\bar{f}_{\sharp}\colon \k\pi_1(X)\to \k\pi_1(X')$.  
From Lemmas \ref{lem:grd} and \ref{lem:grf}, 
we deduce $E^1(\phi \otimes \tilde{f}_*)= \gr(\phi) \otimes H_*(f)$. 
Using the naturality property from \eqref{eq:morfismos}, we 
obtain a commuting diagram,
\begin{equation}
\label{eq:E1nat}
\xymatrix{
\gr^p(M) \otimes_{\k} H_q(X,\k) \ar[rr]^{d^1}
\ar[d]^{\gr(\phi) \otimes H_*(f)} 
&& \gr^{p+1}(M) \otimes_{\k} H_{q-1}(X,\k) 
\ar[d]^{\gr(\phi)\otimes H_*(f)}   \\
\gr^p(M') \otimes_{\k} H_q(X',\k) \ar[rr]^{d'^1}
&& \gr^{p+1}(M') \otimes_{\k} H_{q-1}(X',\k) 
}
\end{equation}

\subsection{Cohomological interpretation}
\label{subsec:d1diff} 
The main result of this section is the following Theorem, 
which identifies the differentials on the $E^1$ page in terms 
of the comultiplication in $H_*(X, \k)$, 
and the $\k\pi$-module structure on $M$, given by the 
multiplication map, $\mu_M\colon M\otimes_{\k} \k\pi \to M$, 
$m \otimes g\mapsto mg$.  Fix integers $p\ge 0$ and $q\ge 1$.  

\begin{theorem}
\label{thm:d1map}
Let $X$ be a connected CW-complex, and $M$ a 
right $\k\pi$-module.  Assume either $\k=\Z$ and $H_*(X,\Z)$ 
is torsion-free, or $\k$ is a field. Then the differential 
$d^1\colon E^1_{-p,p+q}(X,M) \to E^1_{-p-1,p+q}(X,M)$ 
can be decomposed as
\[
\xymatrix{
\gr^p(M) \otimes_{\k} H_q(X,\k) \ar^{d^1}[r]  
\ar^{\id\otimes \nabla_X}[d]&
\gr^{p+1}(M) \otimes_{\k} H_{q-1}(X,\k) \\
\gr^p(M) \otimes_{\k} (H_1(X,\k)  \otimes_{\k} H_{q-1}(X,\k) )
\ar^{\cong}[r]
&( \gr^p(M) \otimes_{\k} \gr^1(\k\pi) ) \otimes_{\k} H_{q-1}(X,\k)\, . 
\ar_{\gr(\mu_M)\otimes \id }[u]
}
\]
\end{theorem}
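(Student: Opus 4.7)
The strategy is to compute the connecting homomorphism defining $d^1$ explicitly, extract a ``linear part'' $\bar\rho_\bullet$ of the equivariant boundary, and then match this linear part with the topological comultiplication $\nabla_X$. First I would fix a cycle $c\in C_q(X,\k)$ representing a class in $H_q(X,\k)$ and a lift $\bar m\in MI^p$ of $m\in\gr^p(M)$. Using the identification $C_q(\wX,\k)=\k\pi\otimes_{\k} C_q(X,\k)$ from \eqref{eq:tensor}, write $\tilde\partial_q(1\otimes e)=\sum_f a^e_f\otimes f$ with $a^e_f\in\k\pi$. The compatibility $p_*\tilde\partial_q=\partial_q p_*$ forces $\epsilon(a^e_f)$ to equal the coefficient of $f$ in $\partial e$.

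Next I would apply $\tilde\partial^M=\id_M\otimes_{\k\pi}\tilde\partial$ to $\bar m\otimes c$, obtaining
\[
\tilde\partial^M(\bar m\otimes c)=\sum_f \bar m\cdot\Bigl(\sum_e c_e\, a^e_f\Bigr)\otimes f \ \in\ M\otimes_{\k} C_{q-1}(X,\k).
\]
Because $c$ is a cycle and augmentation extracts ordinary boundary coefficients, each element $\sum_e c_e\, a^e_f$ sits in $I$. Hence $\tilde\partial^M(\bar m\otimes c)$ lies already in $F^{p+1}(C_{q-1}(X,M))$, and its class in $\gr^{p+1}(M)\otimes_{\k} C_{q-1}(X,\k)$ equals $(\gr(\mu_M)\otimes\id)(m\otimes\bar\rho_q(c))$, where
\[
\bar\rho_q\colon C_q(X,\k)\longrightarrow \gr^1(\k\pi)\otimes_{\k} C_{q-1}(X,\k),\qquad c\longmapsto \sum_f \Bigl[\sum_e c_e\, a^e_f\Bigr]_{I/I^2}\otimes f.
\]
This reduces the theorem to showing that, on the level of homology classes, $\bar\rho_q$ agrees with the $(1,q-1)$-component of $\nabla_X$ under the Hopf isomorphism $\gr^1(\k\pi)\cong H_1(\pi,\k)\cong H_1(X,\k)$ recalled in \eqref{eq:gr1}.

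For the identification itself, I would first verify $q=1$ directly: by \eqref{eq:bdry1}, $\tilde\partial_1(1\otimes e_1)=(x-1)\otimes e_0$, so $\bar\rho_1(e_1)=[x-1]_{I/I^2}\otimes 1$, which under Hurewicz matches $[e_1]\otimes 1$, the $(1,0)$-part of $\nabla_X[e_1]$. For $q\ge 2$, I would use naturality \eqref{eq:E1nat} with respect to a classifying map $X\to K(\pi,1)$ (which induces an iso on $H_1$ and on $\gr(\k\pi)$) to reduce to the case when $C_\bullet(\wX,\k)$ is a free $\k\pi$-resolution of $\k$. In that setting $\bar\rho_\bullet$ becomes the $\Tor$-connecting map attached to $0\to I/I^2\to \k\pi/I^2\to \k\to 0$, and a cellular diagonal-approximation plus Eilenberg--Zilber identifies this connecting map with $\nabla_X$, using that the group-ring comultiplication $g\mapsto g\otimes g$ descends to the coalgebra structure of $H_*(X,\k)$.

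The main obstacle is this last comparison for $q\ge 2$: translating the algebraically defined $\bar\rho_\bullet$, built out of the Fox-calculus-like expansion of $\tilde\partial$ modulo $I^2$, into the topologically defined $\nabla_X$. The $q=1$ case is essentially tautological through the Hurewicz map, but for higher $q$ one needs a careful diagonal-approximation argument, together with bookkeeping of signs and of the particular cellular approximation chosen; this is the step that genuinely uses the hypothesis that $\k$ is a field (or that $H_*(X,\Z)$ is torsion-free), since Universal Coefficients is invoked to split $E^1_{-p,p+q}$ as $\gr^p(M)\otimes_{\k} H_q(X,\k)$.
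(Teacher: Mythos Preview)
Your reduction to the map $\bar\rho_q$ and the treatment of $q=1$ are correct and essentially match the paper's Lemmas~\ref{lem:reduction} and~\ref{lem:d1lin}: both arguments reduce to checking, for $M=\k\pi$ and $p=0$, that $d^1\colon H_q(X,\k)\to H_1(X,\k)\otimes_{\k}H_{q-1}(X,\k)$ coincides with $\nabla_X$.

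The gap is in your proposed step for $q\ge 2$. Naturality with respect to a classifying map $f\colon X\to K(\pi,1)$ gives only
\[
(H_1(f)\otimes H_{q-1}(f))\circ \bar\rho^X_q \;=\; \bar\rho^{K(\pi,1)}_q\circ H_q(f),
\]
and to deduce $\bar\rho^X_q=\nabla_X$ from $\bar\rho^{K(\pi,1)}_q=\nabla_{K(\pi,1)}$ you would need $H_{q-1}(f)$ to be injective. For $q-1\ge 2$ this fails in general (build $K(\pi,1)$ from $X$ by attaching cells of dimension $\ge 3$; this can kill classes in $H_{q-1}(X,\k)$). Moreover, even granting the reduction, identifying $\bar\rho_\bullet$ on $K(\pi,1)$ with $\nabla$ via a ``$\Tor$-connecting map plus diagonal approximation'' is not easier than the original problem: $H_*(\pi,\k)$ is as complicated as $H_*(X,\k)$, and the diagonal-approximation bookkeeping you flag as the main obstacle is still fully present.

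The paper circumvents this by mapping \emph{out} rather than further \emph{in}: it dualizes to $\delta^1=(d^1)^*$ and, for each pair $v_1\in H^1(X,\k)$, $v_{q-1}\in H^{q-1}(X,\k)$, represents these classes by a map $f\colon X\to K_1\times K'_{q-1}$ into a product of Eilenberg--MacLane spaces (over the prime subring). Naturality now goes the right way---one needs only that $f^*$ \emph{hits} the given classes, not that it is injective---and the identity $\delta^1(u_1\otimes u'_{q-1})=u_1\cup u'_{q-1}$ on the target is a short direct computation with the product cell structure (Lemma~\ref{lem:del cup}). This replaces the diagonal-approximation argument you were anticipating by a single universal example.
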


Here, $\nabla_X$ is the comultiplication map, defined 
as the composite 
\begin{equation*}
\label{eq:comult}
\xymatrix{
H_q(X,\k) \ar[r]^(.46){\Delta_*}  \ar[drr]_{\nabla_X}
& H_q(X\times X,\k) \ar[r]^(.36){\cong} & 
\DS{\bigoplus_{i=0}^{q} H_i(X,\k) \otimes_{\k} H_{q-i}(X,\k)}
\ar[d]^(.53){\proj} \\
&& H_1(X,\k) \otimes_{\k} H_{q-1}(X,\k),
}
\end{equation*}
where the first arrow is the homomorphism induced by the 
diagonal map, the second arrow is the K\"unneth isomorphism, 
and the third arrow is projection onto direct summand.
Under the identification $H_i(X,\k)^* \cong H^i(X,\k)$, 
the map $\nabla_X$ is the dual of the cup-product map 
$\cup_X \colon H^1(X,\k)\otimes_{\k} H^{q-1}(X,\k) \to H^q(X,\k)$, 
provided $X$ is a finite-type CW-complex. 

The proof of Theorem \ref{thm:d1map} will occupy the rest 
of this section. 

\subsection{Reducing to the case $M=\k\pi$}
\label{subsec:mtor}

Using the functoriality of the spectral sequence with 
respect to coefficient modules, we first reduce the proof 
to the case when $M$ is a free $\k\pi$-module of rank $1$.

\begin{lemma}
\label{lem:reduction}
If the conclusion of Theorem  \ref{thm:d1map} holds for the 
coefficient module $\k\pi$, then it holds for any coefficient 
module $M$.
\end{lemma}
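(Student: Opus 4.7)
The plan is to exploit the naturality of the equivariant spectral sequence in the coefficient module, using right-multiplication maps $\rho_{\tilde{m}}\colon \k\pi \to M$, $x \mapsto \tilde{m}x$. For any $\tilde{m} \in M$ this is a homomorphism of right $\k\pi$-modules, and if $\tilde{m} \in MI^p$ it carries $I^k$ into $MI^{p+k}$. Thus $\rho_{\tilde{m}}$ preserves the $I$-adic filtration up to a shift by $p$, and consequently induces a morphism of spectral sequences of bidegree $(-p,p)$.

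Given a class $[m] \in \gr^p(M)$, I fix a lift $\tilde{m} \in MI^p$. The same calculation as in Lemma \ref{lem:grf} identifies the induced map $\gr^k(\rho_{\tilde{m}})\colon \gr^k(\k\pi) \to \gr^{p+k}(M)$ with the partial evaluation $\gr(\mu_M)([m] \otimes -)$; in particular, $\gr^0(\rho_{\tilde{m}})$ sends $1$ to $[m]$, and $\gr^1(\rho_{\tilde{m}})$ sends $a \in H_1 = \gr^1(\k\pi)$ to $\gr(\mu_M)([m] \otimes a)$. Combined with the naturality square \eqref{eq:E1nat}, this yields the commutative diagram
\[
\xymatrix{
\gr^0(\k\pi) \otimes_{\k} H_q \ar[r]^{d^1_{\k\pi}}
\ar[d]^{\gr(\rho_{\tilde{m}}) \otimes \id}
& \gr^1(\k\pi) \otimes_{\k} H_{q-1}
\ar[d]^{\gr(\rho_{\tilde{m}}) \otimes \id} \\
\gr^p(M) \otimes_{\k} H_q \ar[r]^{d^1_M}
& \gr^{p+1}(M) \otimes_{\k} H_{q-1}
}
\]
whose left vertical arrow carries $1 \otimes \alpha$ to $[m] \otimes \alpha$.

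The conclusion now follows by chasing $1 \otimes \alpha$ around this square. By the hypothesis applied to $\k\pi$, the element $d^1_{\k\pi}(1 \otimes \alpha)$ is the image of $1 \otimes \nabla_X(\alpha)$ under $\gr(\mu_{\k\pi}) \otimes \id$; the right-hand vertical then carries this to the image of $[m] \otimes \nabla_X(\alpha)$ under $\gr(\mu_M) \otimes \id$, which is exactly the composite claimed in Theorem \ref{thm:d1map} applied to $[m] \otimes \alpha$. The only subtlety is that the naturality of \S\ref{subsec:func} is stated for strictly filtration-preserving morphisms, whereas $\rho_{\tilde{m}}$ shifts by $p$. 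This is best handled by re-indexing the source filtration on $\k\pi$ as $F'^k(\k\pi) = I^{\max(k-p,0)}$ and appealing to \eqref{eq:morfismos}, or equivalently by observing that the chain-level argument producing \eqref{eq:morfismos} goes through unchanged with the bidegree shifted by $(-p,p)$.
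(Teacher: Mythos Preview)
Your proof is correct and follows essentially the same strategy as the paper: pick a $\k\pi$-linear map $\k\pi\to M$ hitting the desired element, and invoke the naturality square \eqref{eq:E1nat}. The one technical difference is that the paper factors an element $x\in MI^p$ as $x=ya$ with $y\in M$ and $a\in I^p$, then uses the filtration-\emph{preserving} map $\psi\colon 1\mapsto y$ (so that $[a]\in\gr^p(\k\pi)$ lands on $[x]\in\gr^p(M)$ and \eqref{eq:E1nat} applies verbatim at level~$p$); your choice $\rho_{\tilde m}\colon 1\mapsto \tilde m\in MI^p$ shifts the filtration by~$p$, which is why you need the extra remark about re-indexing at the end. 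Both work; the paper's factorization simply sidesteps that bookkeeping.
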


\begin{proof}
Let $\psi\colon N \to M$ be a homomorphism of right 
$\k\pi$-modules. Write $H_q=H_q(X,\k)$, and 
$\otimes=\otimes_{\k}$. We then have the following 
cube diagram:
\begin{equation*}
\label{eq:cdm}
\xymatrixcolsep{-20pt}
\xymatrixrowsep{32pt}
\xymatrix{
& \gr^p(N) \otimes (H_1 \otimes H_{q-1}) \ar^(.43){\cong}[rr]  
 \ar'[d]^(.5){\gr(\psi) \otimes \id}[dd] 
&& ( \gr^p(N) \otimes \gr^1(\k\pi)) \otimes H_{q-1}
 \ar^{\gr(\mu_N)\otimes \id}[dl] 
 \ar^(.35){\gr(\psi) \otimes \id}[dd]
\\
\gr^p(N) \otimes H_q \ar^(.52){d^1_{N}}[rr]  
\ar^(.35){\gr(\psi) \otimes \id}[dd] 
\ar^{\id\otimes \nabla}[ur] &&
\gr^{p+1}(N) \otimes H_{q-1} 
\ar^(.35){\gr(\psi) \otimes \id}[dd]
\\
& \gr^p(M) \otimes (H_1 \otimes H_{q-1}) \ar'[r]^(.9){\cong}[rr]  
&& ( \gr^p(M) \otimes \gr^1(\k\pi)) \otimes H_{q-1}
 \ar^{\gr(\mu_M)\otimes \id}[dl]
\\
\gr^p(M) \otimes  H_q \ar^(.52){d^1_{M}}[rr]  
\ar^{\id\otimes \nabla}[ur] 
&&\gr^{p+1}(M) \otimes H_{q-1}
}
\end{equation*}
All side squares of the cube commute:  the front one 
by naturality of $d^1$, as explained in \eqref{eq:E1nat}, 
the right one by functoriality of $\gr$, and the other 
two for obvious reasons. 

Now suppose the top square commutes for $N=\k\pi$. 
Let $[x] \otimes h$ be an additive generator of
$\gr^p(M) \otimes H_q$, where $x=y a$, 
with $y\in M$ and $a\in I^p$. Let $\psi\colon \k\pi \to M$ 
be the $\k\pi$-linear map sending the unit of $\pi$ to $y$.  
Chasing the cube diagram, we see that the bottom 
square commutes, with 
$[x] \otimes h=(\gr(\psi)\otimes \id)([a]\otimes h) $ 
as input. Hence, the bottom square commutes. 
\end{proof}

\subsection{The case $M=\k\pi$}
\label{subsec:sskpi}

The $E^1$-term of the $I$-adic spectral sequence 
of $X$ with coefficients in $\k\pi$, 
\[
E^1(X,\k\pi)=\gr(\k\pi) \otimes_{\k} H_*(X,\k),
\]
is a left $\gr(\k\pi)$-module, freely generated by 
a $\k$-basis for $1 \otimes H_*(X,\k)$. We want to show 
\begin{equation}
\label{eq:d1 nabla}
d^1=(\gr(\mu_{\k\pi})\otimes \id) \circ 
(\id \otimes \nabla_X).
\end{equation}
Clearly, the map on the right side is $\gr(\k\pi)$-linear. In the next 
Lemma, we verify that $d^1$ is $\gr(\k\pi)$-linear, too (a more 
general result will be proved in Lemma \ref{lem:dr lin}). 

\begin{lemma}
\label{lem:d1lin}
The differential $d^1\colon E^1_{-p,p+q}(X,\k\pi) \to 
E^1_{-p-1,p+q}(X,\k\pi)$ is $\gr(\k\pi)$-linear. 
\end{lemma}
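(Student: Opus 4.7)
The plan is to lift the $\gr(\k\pi)$-module structure on $E^1(X,\k\pi)$ from a natural filtered module structure on the equivariant chain complex $(C_{\bullet}(\wX,\k), F^{\bullet})$, and then observe that a filtered chain endomorphism automatically commutes with every differential of the associated spectral sequence.

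First I would record the key structural fact: for every $a\in\k\pi$, left multiplication $\mu_a\colon C_{\bullet}(\wX,\k)\to C_{\bullet}(\wX,\k)$ is a chain map, because the equivariant boundary maps $\tilde\partial_q$ are $\k\pi$-linear with respect to the deck transformation action. If $a\in I^m$, then $\mu_a(F^n)\subseteq F^{n+m}$, where $F^n=I^n\cdot C_{\bullet}(\wX,\k)$ is the filtration defining our spectral sequence. Moreover, the induced map $F^n/F^{n+1}\to F^{n+m}/F^{n+m+1}$ depends only on the class $[a]\in\gr^m(\k\pi)=I^m/I^{m+1}$, since $I^{m+1}\cdot F^n\subseteq F^{n+m+1}$.

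Next I would push this through the machinery of filtered complexes: a chain map of filtration degree $m$ induces, on each page $E^r$, a map of bidegree $(-m,m)$ that commutes with the differential $d^r$. Applied to $\mu_a$ for $a\in I^m$ and varying $m$, this promotes $E^r(X,\k\pi)$ to a bigraded $\gr(\k\pi)$-module on which $d^r$ is $\gr(\k\pi)$-linear. Specializing to $r=1$ yields the claim. To complete the argument, I would check that the $\gr(\k\pi)$-module structure manufactured this way agrees with the tautological one on the identification $E^1_{-p,p+q}(X,\k\pi)=\gr^p(\k\pi)\otimes_\k H_q(X,\k)$: on $E^0_{-p,q}=\gr^p(\k\pi)\otimes_\k C_{q-p}(X,\k)$, multiplication by $[a]\in\gr^m(\k\pi)$ acts as multiplication in $\gr(\k\pi)$ on the left tensor factor and as the identity on the right, and by Lemma~\ref{lem:grd} this descends to the analogous action on $E^1$.

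There is no substantive obstacle beyond bookkeeping; the only care needed is to track bidegrees correctly (making sure a filtration shift by $m$ translates to bidegree $(-m,m)$ rather than $(-m,0)$) and to confirm the compatibility of the two a priori distinct $\gr(\k\pi)$-actions on $E^1$. Note that the same argument, applied verbatim to all higher pages, will establish the $\gr(\k\pi)$-linearity of every $d^r$, which matches the stronger statement foreshadowed in the parenthetical remark of the lemma.
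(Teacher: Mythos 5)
Your argument is correct. The key facts you invoke are all sound: left multiplication $\mu_a$ by $a\in\k\pi$ commutes with $\tilde{\partial}$ because $C_{\bullet}(\wX,\k)$ is a complex of left $\k\pi$-modules; for $a\in I^m$ the map $\mu_a$ shifts the filtration $F^n=I^n\cdot C_{\bullet}(\wX,\k)$ by $m$; a filtered chain map of filtration degree $m$ induces on every page a map of bidegree $(-m,m)$ commuting with $d^r$ (via $\mu_a(Z^r_{-s})\subseteq Z^r_{-s-m}$ and likewise for the boundaries); and if $a\in I^{m+1}$ the induced map lands one step deeper in the filtration and hence vanishes on $E^r$, so the action factors through $\gr^m(\k\pi)$. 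Your final compatibility check, that the module structure so obtained agrees on $E^0_{-p,q}=\gr^p(\k\pi)\otimes_{\k}C_{q-p}(X,\k)$ with left multiplication on the first tensor factor and hence, via Lemma~\ref{lem:grd}, with the tautological structure on $E^1_{-p,p+q}=\gr^p(\k\pi)\otimes_{\k}H_q(X,\k)$, is exactly the point that needs to be said and you say it.

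Where you differ from the paper: the paper's own proof of this lemma is a direct computation at $r=1$. It identifies $d^1$ as the connecting homomorphism of the triple $(F^s,F^{s+1},F^{s+2})$, observes that for a cycle $z$ and $x\in I^p$ one has $d^1([x]\otimes[z])\equiv\tilde{\partial}(x\otimes z)\bmod F^{p+2}$, and concludes from $\tilde{\partial}(x\otimes z)=x\,\tilde{\partial}(1\otimes z)$. That computation is shorter here and has the side benefit of producing the explicit representative of $d^1$ that the paper immediately reuses in \S\ref{subsec:sskpi}. Your route --- packaging the module structure as a family of filtered chain endomorphisms and letting the general machinery of filtered complexes do the work --- is essentially the argument the paper deploys later, in Lemma~\ref{lem:dr lin}, to prove the stronger statement that every $d^r$ is $\gr_J(R)$-linear (via the inclusions $J^n\cdot Z^r_{-s}\subseteq Z^r_{-s-n}$). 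So your proof is a legitimate, more structural alternative that proves more at once, at the cost of the extra bookkeeping you correctly flag: tracking the bidegree shift and verifying that the manufactured $\gr(\k\pi)$-action on $E^1$ coincides with the one coming from the identification $E^1=\gr(\k\pi)\otimes_{\k}H_*(X,\k)$.
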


\begin{proof}
Let $F^s= I^s C_{\bullet}(\wX,\k)$ be the $I$-adic filtration 
on $C_{\bullet}(X, \k\pi)$. By definition, the differential $d^1$ 
is the connecting homomorphism in the homology exact 
sequence of 
\[
\xymatrix{0\ar[r]& F^{s+1}/F^{s+2} \ar[r]& F^{s}/F^{s+2} 
\ar[r]& F^{s}/F^{s+1} \ar[r]& 0}.
\]

Let  $z\in C_q(X,\k)$ such that $\partial_q(z)=0$.  Then
$d^1(1\otimes [z]) \equiv \tilde{\partial}(1\otimes z),\, 
\bmod\, F^2$. 
More generally, if $x\in I^p$, then
$d^1([x]\otimes [z]) \equiv \tilde{\partial}(x\otimes z),\, 
\bmod\, F^{p+2}$. 
On the other hand,  $\tilde{\partial}(x\otimes z)= x
\tilde{\partial} (1\otimes z)$. Combining these formulas  
implies $d^1([x]\otimes [z]) = [x] d^1(1\otimes [z])$. 
Clearly, this implies the claim of the lemma.
\end{proof}

Thus, to verify \eqref{eq:d1 nabla}, it is sufficient to check 
equality on free $\gr(\k\pi)$-generators, i.e., on a $\k$-basis 
for $1 \otimes H_*(X,\k)$.  In other words, to identify the 
differentials on the $E^1$ page, we only need to show that, 
upon identifying  $E^1_{0,q}=H_q(X,\k)$ and 
$E^1_{-1,q}= H_1(X,\k)  \otimes_{\k} H_{q-1}(X,\k)$, the map 
$d^1\colon  E^1_{0,q}\to E^1_{-1,q}$ coincides with the 
comultiplication map $\nabla_X$, for all $q\ge 1$.

\subsection{Further reductions}
\label{subsec:further reduce}
Clearly, we may assume $X$ is a finite CW-complex. 
Indeed, an arbitrary homology class in $H_q(X,\k)$ is represented 
by a cycle supported on a finite subcomplex of $X$. Dualizing, 
we are left with proving the following Proposition.

\begin{prop}
\label{prop:deltacup}
Let $X$ be a connected, finite-type CW-complex, and assume 
either $\k=\Z$ and $H_*(X,\Z)$ is torsion-free, or $\k$ is a field. 
Then, the dual $\delta^1=(d^1)^{*}\colon  (E^1_{-1,q})^* \to (E^1_{0,q})^*$ 
coincides with the cup-product map $\cup_X\colon  
 H^1(X,\k)  \otimes_{\k} H^{q-1}(X,\k) \to H^{q}(X,\k)$.
\end{prop}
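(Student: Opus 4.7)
The plan is to reinterpret $d^1$ as a connecting homomorphism of a short exact sequence of chain complexes, and then compare it, via Kronecker duality, with a cellular diagonal approximation computing the cup product.

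By construction, $d^1\colon H_q(X,\k) \to H_1(X,\k) \otimes_{\k} H_{q-1}(X,\k)$ is the connecting homomorphism for the short exact sequence of chain complexes $0 \to F^1/F^2 \to F^0/F^2 \to F^0/F^1 \to 0$, where $F^0/F^1 \cong (C_\bullet(X,\k),\partial)$ and $F^1/F^2 \cong (H_1(X,\k) \otimes_{\k} C_\bullet(X,\k),\; \id\otimes\partial)$, using the canonical isomorphism \eqref{eq:gr1}. Explicitly, for a cycle $z \in C_q(X,\k)$, $d^1([z])$ is represented by the class of $\tilde\partial_q(1 \otimes z)$ modulo $I^2 \otimes C_{q-1}(X,\k)$. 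Dually, under the finite-type, torsion-free or field hypothesis, $(E^1_{-1,q})^{*}$ is naturally identified with $H^1(X,\k) \otimes_{\k} H^{q-1}(X,\k)$ and $(E^1_{0,q})^*$ with $H^q(X,\k)$; so $\delta^1\colon H^1 \otimes H^{q-1} \to H^q$ sends $\alpha \otimes \beta$ to the class whose pairing with $[z]$ is $(\alpha\otimes\beta)(d^1[z])$.

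Next, I would compare $d^1$ with cup product via a $\pi$-equivariant cellular diagonal approximation $\tilde\Delta\colon C_\bullet(\wX,\k) \to C_\bullet(\wX,\k)\otimes_{\k} C_\bullet(\wX,\k)$, which exists and is unique up to equivariant chain homotopy by equivariant Eilenberg--Zilber. Applying $\k \otimes_{\k\pi} (-)$ gives a cellular diagonal approximation $\Delta\colon C_\bullet(X,\k) \to C_\bullet(X,\k)\otimes_{\k} C_\bullet(X,\k)$, which computes $\cup_X$ by the standard formula. Let $\Delta_{1,q-1}\colon C_q(X,\k) \to C_1(X,\k) \otimes_{\k} C_{q-1}(X,\k)$ be its K\"unneth component of bidegree $(1,q-1)$, and let $\Delta'$ be the composite of $\Delta_{1,q-1}$ with the quotient $C_1(X,\k) \twoheadrightarrow H_1(X,\k)$. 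Then the dual of $H_q(\Delta')$ equals the restriction of $\cup_X$ to $H^1 \otimes H^{q-1}$, so the proposition reduces to the chain-level identification $d^1 = H_q(\Delta')$.

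The heart of the proof is this last identification. For $q=2$ it follows directly from the Fox-calculus formula \eqref{eq:fox der}: reducing $(\partial r/\partial x_i)^\phi$ modulo $I$ yields the exponent sum of $x_i$ in $r$, which is precisely the coefficient extracted by the $(1,1)$-component of any cellular diagonal of the corresponding $2$-cell. For general $q$, I would construct $\tilde\Delta$ inductively on cells of $\wX$ so that its reduction to the piece of total $I$-filtration degree one on the left factor is dictated tautologically by $\tilde\partial_q \bmod I^2$; this forces $H_q(\Delta') = d^1$ cell-by-cell. The main obstacle will be to set up this inductive construction compatibly, since there is no closed-form analogue of Fox derivatives for higher-dimensional cells; the cleanest workaround would be to appeal to naturality (via Lemma \ref{lem:grf} and diagram \eqref{eq:E1nat}) to reduce the general identification to a universal example, such as the attaching configuration of a single $q$-cell, where the linear part of $\tilde\partial_q$ and the $(1,q-1)$-component of $\tilde\Delta$ can be compared by direct construction.
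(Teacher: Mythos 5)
Your reformulation of the problem is sound: $d^1$ is indeed the connecting homomorphism of $0\to F^1/F^2\to F^0/F^2\to F^0/F^1\to 0$, represented on a cycle $z$ by $\tilde\partial_q(1\otimes z)\bmod I^2\otimes C_{q-1}$, and the dual of the map induced by the $(1,q-1)$-component of a cellular diagonal, projected to $H_1\otimes C_{q-1}$, is the restriction of $\cup_X$. But the identification $d^1=H_q(\Delta')$, which you correctly single out as the heart of the proof, is precisely the content of the proposition, and you do not actually prove it. Even your base case $q=2$ is off: reducing $(\partial r/\partial x_i)^{\phi}$ \emph{modulo $I$} (i.e., applying the augmentation) gives the exponent sum, which is the coefficient of $e^i_1$ in the \emph{ordinary} boundary $\partial_2$ and contributes nothing to $d^1$; what $d^1$ sees is the class of $(\partial r/\partial x_i)^{\phi}-\epsilon\bigl((\partial r/\partial x_i)^{\phi}\bigr)$ in $I/I^2\cong H_1$, and matching that element of $H_1\otimes C_1$ against $\Delta_{1,1}(e_2)$ (the quadratic part of the Magnus expansion of $r$ versus the cup-product form) is a genuine theorem, not a tautology. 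For general $q$, prescribing the $(1,q-1)$-component of $\tilde\Delta$ to agree with $\tilde\partial_q\bmod I^2$ and then "building $\tilde\Delta$ inductively" requires verifying compatibility with the chain-map identity $(\tilde\partial\otimes 1+1\otimes\tilde\partial)\tilde\Delta=\tilde\Delta\tilde\partial$ in all bidegrees, which you do not address; and your proposed universal example --- the attaching configuration of a single $q$-cell --- cannot carry a naturality reduction, because an arbitrary pair of classes $v_1\in H^1(X,\k)$, $v_{q-1}\in H^{q-1}(X,\k)$ on an arbitrary $X$ is not pulled back from any such configuration.

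The paper resolves exactly this difficulty differently. After reducing to a prime field $\k$ (so that $R=\Z$ or $\Z_p$ is available) and disposing of $q=1$ via the explicit formula \eqref{eq:bdry1}, it uses obstruction theory to produce a map $f\colon X\to K=K(R,1)\times K(R,q-1)$ with $v_1=f^*(u_1)$ and $v_{q-1}=f^*(u'_{q-1})$; the naturality square \eqref{eq:e1cup} for $\delta^1$, together with naturality of cup products, then reduces the whole statement to Lemma \ref{lem:del cup}, an explicit computation of $\tilde\partial(\tilde e_1\times e'_{q-1})$ in the product cell structure of $K$, where $\widetilde K=\widetilde K_1\times K'_{q-1}$ makes both the equivariant boundary and the cross product completely transparent. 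This product of Eilenberg--MacLane spaces is the correct ``universal example'' your naturality idea is reaching for; without it (or an equivalent explicit chain-level argument, say via an Alexander--Whitney map on the bar resolution), your proposal has a gap at its central step.
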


In the above, we may suppose $\k$ is actually a prime field. 
Indeed, in the first case, both $\delta^1$ and $\cup_X$ 
are defined over $\Z$, and $H^*(X, \Z)$ injects into 
$H^*(X, \Q)$, so we may replace $\k=\Z$ by $\Q$.  
In the second case, both maps are defined over the 
prime field of $\k$, so we may replace $\k$ by its 
prime field.

We will reduce the proof to a special class of spaces, 
using the functoriality properties of the spectral sequence.  
Let us first state those properties, in the form we will need. 

Let $f\colon X\to X'$ be a cellular map, and consider 
diagram \eqref{eq:E1nat}, for the morphism 
$\phi=\bar{f}_{\sharp}\colon \k\pi\to \k\pi'$.  In degree 
$p=0$, this diagram simplifies to
\begin{equation}
\label{eq:e1d0}
\xymatrixcolsep{50pt}
\xymatrix{
H_q(X,\k) \ar[r]^(.4){d^1}
\ar[d]^{H_*(f)} 
& H_1(X,\k) \otimes_{\k} H_{q-1}(X,\k) 
\ar[d]^{H_*(f)\otimes H_*(f)}   \\
H_q(X',\k) \ar[r]^(.4){d'^1}
& H_1(X',\k) \otimes_{\k} H_{q-1}(X',\k) 
}
\end{equation}
Dualizing, and writing $f^*=H^*(f)$, we obtain the 
commuting diagram
\begin{equation}
\label{eq:e1cup}
\xymatrixcolsep{45pt}
\xymatrix{
H^1(X,\k) \otimes_{\k} H^{q-1}(X,\k)  
\ar[r]^(.6){\delta^1}  &H^q(X,\k)
\\
H^1(X',\k) \otimes_{\k} H^{q-1}(X',\k)  
\ar[u]_{f^{*}\otimes f^{*}}  
\ar[r]^(.65){\delta'^1} &H^q(X',\k)
\ar[u]_{f^*}
}
\end{equation}

\subsection{A computation with Eilenberg-MacLane spaces}
\label{subsec:kpin}
Recall $\k$ is a prime field. Let $R=\Z$ if $\k=\Q$, and 
$R=\Z_p$ if $\k=\F_p$. For an integer $r\ge 1$, let 
$K_r$  be an Eilenberg-MacLane space of type $K(R,r)$. 
We may assume $K_r$ is a finite-type CW-complex, 
obtained from $S^r$ by attaching cells of dimension 
$r+1$ and higher, 
\[
K_r=e_0\cup e_r \cup e^{1}_{r+1}\cup \cdots \cup e^{m}_{r+1} 
\cup \cdots
\] 
Let $\brac{e_r}\in H_r(K_r,R)$ be the homology class 
represented by the $r$-cell, and let $\brac{e_r}^*\in  H^r(K_r,\k)$ 
be its dual over $\k$. 

Now let $K'_r$ be another copy of $K_r$, with cells $e'_j$, 
and consider the product CW-complex $K=K_1\times K'_{r}$. 
Let $\proj$ and $\proj'$ be the projections of $K$ onto the 
two factors.  Define cohomology classes 
$u_1=\proj^*(\brac{e_1}^*) \in H^1(K,\k)$ and 
$u'_r=\proj'^*(\brac{e'_r}^*)  \in H^r(K,\k)$.

\begin{lemma} 
\label{lem:del cup}
For the CW-complex $K$ defined above, 
$\delta^1(u_1\otimes u'_{r}) =  u_1 \cup u'_{r}$.
 \end{lemma}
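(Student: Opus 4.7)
The plan is a direct computation on $K = K_1 \times K'_r$. Both $\delta^1(u_1 \otimes u'_r)$ and $u_1 \cup u'_r$ live in $H^{r+1}(K,\k)$, so by the Künneth theorem (applicable since $\k$ is a prime field) it suffices to compare their pairings with the cross-product basis of $H_{r+1}(K,\k) = \bigoplus_{i+j=r+1} H_i(K_1,\k) \otimes_{\k} H_j(K'_r,\k)$. On the cup-product side, $u_1 \cup u'_r = \proj^*\brac{e_1}^* \cup \proj'^*\brac{e'_r}^*$ is identified under Künneth with the cross product $\brac{e_1}^* \times \brac{e'_r}^*$, so it pairs nontrivially only against $[e_1] \times [e'_r]$, where it evaluates to $\pm 1$.

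The key input on the spectral-sequence side is that the equivariant chain complex of a product is the $\k$-tensor product $C_{\bullet}(\widetilde{K_1},\k) \otimes_{\k} C_{\bullet}(\widetilde{K'_r},\k)$, for which the Leibniz rule gives
\[
\tilde\partial(\tilde{e}_i \times \tilde{e}'_j) = \tilde\partial\tilde{e}_i \times \tilde{e}'_j + (-1)^i \tilde{e}_i \times \tilde\partial\tilde{e}'_j.
\]
Applied to $\tilde{e}_1 \times \tilde{e}'_r$, using $\tilde\partial\tilde{e}_1 = (g_1 - 1)\tilde{e}_0$ (with $g_1$ generating the $K_1$-factor of $\pi_1(K)$) and $\tilde\partial\tilde{e}'_r = 0$ for $r \ge 2$ (since $K'_r$ is simply-connected with $\partial e'_r = 0$; for $r=1$ a second Leibniz term appears but will be handled below), the filtration-one part is $(g_1 - 1)(\tilde{e}_0 \times \tilde{e}'_r)$. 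Under $\gr^1(\k\pi) \otimes_{\k} H_r(K,\k) \cong H_1(K,\k) \otimes_{\k} H_r(K,\k)$ this represents $[e_1] \otimes [e'_r]$, sitting in the Künneth summand $H_1(K_1) \otimes H_r(K'_r)$. Because $u_1 \otimes u'_r$ is concentrated precisely in the dual summand $H^1(K_1) \otimes H^r(K'_r)$, the pairing $\langle u_1 \otimes u'_r, d^1[e_1 \times e'_r] \rangle = \langle u_1,[e_1]\rangle \langle u'_r,[e'_r]\rangle = \pm 1$ matches the cup-product value.

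To finish, I must verify $\langle u_1 \otimes u'_r, d^1[\alpha \times \beta]\rangle = 0$ on every other Künneth basis element $[\alpha] \times [\beta] \in H_i(K_1) \otimes H_j(K'_r)$. By Leibniz, the first summand $\tilde\partial\tilde\alpha \times \tilde\beta$, upon taking its filtration-one part and passing to $\gr^1 \otimes H_r(K,\k)$, lands in $H_1(K_1) \otimes H_{i-1}(K_1) \otimes H_j(K'_r)$, which meets the distinguished summand $H_1(K_1) \otimes H_0(K_1) \otimes H_r(K'_r)$ only when $(i,j) = (1,r)$; the second summand has its $\gr^1$-factor in $H_1(K'_r)$, hence sits in the wrong $H_1$-piece of $H_1(K)$ and is annihilated by $u_1$ (this also handles the extra $r=1$ term from the main step). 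I expect the main technical obstacle to be the case $R = \Z_p$, $\k = \F_p$, where both factors carry cells in every degree. Here one invokes the standard $\F_p[\Z_p]$-resolution of $\F_p$: via $N = 1 + t + \cdots + t^{p-1} \equiv (t-1)^{p-1} \pmod{(t-1)^p}$ in $\F_p[\Z_p]$, the equivariant boundary $\tilde\partial\tilde{e}_{2k}$ has filtration level $\ge p - 1$, contributing nothing modulo $F^2$ for $p \ge 3$; for $p = 2$ the filtration-one term still factors through $H_{i-1}(K_1) \otimes H_j(K'_r)$ with $i \ge 2$, which is disjoint from the distinguished summand. This completes the verification on all remaining basis elements.
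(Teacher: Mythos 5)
Your proposal is correct and follows essentially the same route as the paper: reduce to Kronecker pairings against the K\"unneth basis of $H_{r+1}(K,\k)$, identify $u_1\cup u'_r$ with the cross product $\brac{e_1}^*\times\brac{e'_r}^*$, and compute $d^1$ on $\brac{e_1\times e'_r}$ via the Leibniz rule in $C_{\bullet}(\widetilde{K}_1\times \widetilde{K}'_r,\k)$, treating $r=1$ separately. The only divergence is in killing the contributions of the remaining K\"unneth summands, which the paper dispatches in one line by naturality of $d^1$ with respect to the factor inclusions, while you verify it (correctly, if more laboriously) by a direct chain-level computation, including the ultimately superfluous analysis of the higher cells of $K(\Z_p,1)$ via $N=(t-1)^{p-1}$.
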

 
\begin{proof}
Consider the map $\delta^1\colon 
H^1(K) \otimes H^{r}(K) \to H^{r+1}(K)$, 
with $\k$ coefficients. By Kronecker duality, 
we only need to verify:
\begin{equation}
\label{eq:kron}
\langle d^1(z), u_1\otimes u'_{r} \rangle = 
 \langle z, u_1\cup u'_{r} \rangle, 
\end{equation}
for all $z\in H_{r+1}(K)$.  By the K\"{u}nneth formula, 
\begin{equation}
\label{eq:kunneth}
H_{r+1}(K)=(H_0(K_1)\otimes H_{r+1}(K'_{r})) \oplus 
(H_1(K_1)\otimes H_{r}(K'_{r}))\oplus 
(H_{r+1}(K_1)\otimes H_{0}(K'_{r})).
\end{equation}
By construction, 
$u_1\cup u'_{r} = \brac{e_1}^* \times \brac{e'_r}^*$.  
It follows that
$ \langle z, u_1\cup u'_{r} \rangle= 1$ if 
$z=\brac{e_1} \times \brac{e'_r}$ and vanishes 
if $z$ belongs to one of the other two summands in 
\eqref{eq:kunneth}. 

Similarly, the left hand side of \eqref{eq:kron} 
vanishes if $z$ belongs to one of those two summands; 
this follows from the naturality of $d^1$, as expressed 
in \eqref{eq:e1d0}. 

Now suppose $r>1$. Note that $\pi_1(K_1)$ is a 
cyclic group, generated by $[e_1]$, whereas 
$\pi_1(K'_{r})=0$.  Moreover, 
$\widetilde{K}=\widetilde{K}_1 \times K'_{r}$. 
We compute:
\[
\tilde{\partial} (\tilde{e}_1\times e'_{r}) = 
\tilde{\partial}(\tilde{e}_1)\times e'_{r} - 
\tilde{e}_1\times \partial(e'_{r}) = 
([e_1]-1) \tilde{e}_0 \times e'_{r}, 
\]  
and so $d^1(\brac{e_1\times e'_{r}}) = 
\brac{e_1\times e'_0} \otimes \brac{e_0\times e'_{r}}$. It follows that 
$\langle d^1(\brac{e_1\times e'_{r}}), u_1\otimes u'_{r} \rangle =1$.

If $r=1$, then $\widetilde{K}=\widetilde{K}_1 
\times \widetilde{K}'_{1}$.  We compute:
\[
\tilde{\partial} (\tilde{e}_1\times \tilde{e}'_1) = 
\tilde{\partial}(\tilde{e}_1)\times \tilde{e}'_1 - 
\tilde{e}_1\times \tilde{\partial}(\tilde{e}'_1) = 
([e_1]-1) \tilde{e}_0 \times \tilde{e}'_1 
-([e'_1]-1) \tilde{e}_1 \times \tilde{e}'_0, 
\]  
and so $d^1(\brac{e_1\times e'_1}) = 
\brac{e_1\times e'_0} \otimes \brac{e_0\times e'_{1}} - 
\brac{e_0\times e'_1} \otimes \brac{e_1\times e'_0}$. 
It follows that 
$\langle d^1(\brac{e_1\times e'_{1}}), u_1\otimes u'_{1} \rangle =1$.
\end{proof}

\subsection{Proof of Proposition \ref{prop:deltacup}}
\label{subsec:proofdeltacup}

First assume $q=1$.  Let $e$ be a $1$-cell 
of $X$, and  $\brac{e}\in H_1(X,\k)$ the homology 
class it represents.  From the identification $I/I^2=\gr^1(\k\pi)$, 
and formula \eqref{eq:bdry1}, we get $d^1(\brac{e})=\brac{e}$. 
Thus,  $d^1\colon  E^1_{0,1}\to E^1_{-1,1}$ coincides with 
$\nabla_X=\id\colon  H_{1}(X,\k) \to H_{1}(X,\k)$.

Now assume $q>1$.  It is enough to show that 
$\delta^1(v_1 \otimes v_{q-1}) = v_1 \cup v_{q-1}$ over $\k$, 
for each $v_1\in H^1(X,R)$ and $v_{q-1}\in H^{q-1}(X,R)$.  
Let $K_r=K(R,r)$ be an Eilenberg-MacLane space as above. 
By obstruction theory, there is a  map $f_r\colon X\to K_r$ 
such that $v_r=f_r^*(\brac{e_r}^*)$. 
Now set $K=K_1\times K'_{q-1}$, where $K'_{q-1}$ 
is a copy of $K_{q-1}$, and define $f$ to be a cellular approximation 
of the map $F=(f_1,f'_{q-1})\colon X\to K$.  With notation as above, 
$v_1=f^*(u_1)$ and $v_{q-1}=f^*(u'_{q-1})$, over $\k$.
Using diagram \eqref{eq:e1cup} and the naturality of 
cup-products, the conclusion follows from Lemma \ref{lem:del cup}.
\hfill\qed

\section{Convergence issues}
\label{sec:convergence}

In this section, we discuss the convergence properties 
of the equivariant spectral sequence $E^{\bullet}(X,M)$.  
Throughout this section, $\k$ will denote a fixed field. 

\subsection{The $E^{\infty}$ term}
\label{subsec:conv}
Let $C_{\bullet}=(C_q,\partial_q)$ be a chain complex 
over $\k$,  endowed with an increasing filtration, 
$\cdots \subset F_{-2}\subset F_{-1} \subset F_{0}=C$, 
and let $E^{\bullet}=\{E^r_{s,t}, d^r\}_{r\ge 1}$ be the spectral 
sequence associated to $F_{\bullet}$.  Suppose 
the following condition holds: For each $s$ and $t$, 
the $\k$-vector space $E^1_{s,t}$ is finite-dimensional. Then
$E^r_{s,t}=E^{r+1}_{s,t}$  for all $r\ge r_0$. 
In this case, the $E^{\infty}$ term is defined as 
$E^{\infty}_{s,t}=E^{r_0}_{s,t}$, and the inclusion
\begin{equation}
\label{eq:inftyb}
E^{\infty}_{s,t} \supseteq \gr^s (H_{s+t}(C))\, ,
\end{equation}
holds for all $s, t$.

The spectral sequence is said to be {\em convergent} 
if $E^{\infty}_{s,t} = \gr^s (H_{s+t}(C))$, for all $s, t$.  
For short, we write $E^1_{s,t}\Rightarrow H_{s+t}(C)$. 
Recall $Z^r_s=\{z\in F_s \mid \partial z \in F_{s-r}\}$, and 
write $Z^{\infty}_s=\{z\in F_s \mid  \partial z =0\}$. 
In this setup, convergence of $E^{\bullet}$ is equivalent to
\begin{equation}
\label{eq:weak conv}
\bigcap_{r} (Z^r_s + F_{s-1}) \subseteq Z^{\infty}_s + F_{s-1}, 
\quad \text{for all $s$.}
\end{equation}

\subsection{The $E^{\infty}$ term of the equivariant spectral sequence}
\label{subsec:Iadic conv}
Now let $X$ be a connected CW-complex, with fundamental group 
$\pi=\pi_1(X)$.  Let  $I=I_\k(\pi)$ be the augmentation ideal of $\k\pi$, 
and let $M$ be a right $\k\pi$-module.  Under fairly general assumptions, 
the $I$-adic spectral sequence $E^{\bullet}(X,M)$ has an $E^{\infty}$ term.  
Its convergence, though, is a rather delicate matter. 

\begin{prop}
\label{prop:conv ss}
Suppose $\dim_{\k} H_q(X,\k)<\infty$, for all $q\ge 0$, 
and $\dim_{\k} M/MI < \infty$. Then, the spectral 
sequence $E^{\bullet}(X,M)$ has an $E^{\infty}$ term. 
\end{prop}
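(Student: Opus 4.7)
The plan is to invoke the general finite-dimensionality criterion recorded in \S\ref{subsec:conv}: the spectral sequence $E^{\bullet}(X,M)$ has a well-defined $E^{\infty}$ term as soon as each $E^1_{s,t}(X,M)$ is a finite-dimensional $\k$-vector space. So the entire task reduces to verifying this finite-dimensionality, term by term.

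Since $\k$ is a field, I would use formula \eqref{eq:e1 new} to identify
\[
E^1_{-p,q}(X,M) \,=\, \gr^{p}(M) \otimes_{\k} H_{q-p}(X,\k).
\]
The second tensor factor is finite-dimensional by hypothesis, so what remains is to prove $\dim_{\k} \gr^{p}(M) < \infty$ for every $p \ge 0$.

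For this I would combine two structural facts already recorded in \S\ref{subsec:gring} and \S\ref{subsec:func1}. First, $\gr(\k\pi)$ is generated as a ring in degree $1$, and by \eqref{eq:gr1} its degree-one piece is $\gr^{1}(\k\pi) \cong H_{1}(\pi,\k) \cong H_{1}(X,\k)$, which is finite-dimensional by hypothesis; iterating the multiplication gives a surjection $(\gr^{1}(\k\pi))^{\otimes n} \twoheadrightarrow \gr^{n}(\k\pi)$, so each $\gr^{n}(\k\pi)$ is finite-dimensional. Second, the $\gr(\k\pi)$-module $\gr(M)$ is generated in degree $0$ by the module of coinvariants $M/MI$, which is finite-dimensional by hypothesis; the module structure map therefore induces a surjection $M/MI \otimes_{\k} \gr^{p}(\k\pi) \twoheadrightarrow \gr^{p}(M)$, and the left-hand side is finite-dimensional by the previous step.

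No serious obstacle arises: the argument is essentially bookkeeping, once one observes that the two low-degree generation properties propagate finite-dimensionality from the hypotheses $\dim_{\k} H_{1}(X,\k) < \infty$ and $\dim_{\k} M/MI < \infty$ to every graded piece of $\gr(\k\pi)$, and then to every graded piece of $\gr(M)$. The only mildly delicate point is remembering that \eqref{eq:e1 new}, rather than merely \eqref{eq:e1}, is available precisely because $\k$ is a field, so that Tor vanishes in the Universal Coefficients formula applied to $\gr^{p}(M)$.
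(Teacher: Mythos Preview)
Your proposal is correct and follows essentially the same route as the paper: both arguments reduce to showing each $E^1_{-s,t}$ is finite-dimensional by combining the identification $E^1_{-s,t}=\gr^s(M)\otimes_{\k} H_{t-s}(X,\k)$ with the observation that $\gr(\k\pi)$ is generated in degree~$1$ by $H_1(X,\k)$ and $\gr(M)$ is generated in degree~$0$ by $M/MI$. Your write-up is simply more explicit about the intermediate surjections, which is fine.
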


\begin{proof}
Recall $\gr(M)$ is generated as a $\gr(\k\pi)$-module 
by $\gr^0(M)=M/MI$, and $\gr (\k \pi)$ is generated as a ring by
$\gr^1 (\k \pi)= H_1(X, \k)$. We infer
that $\dim_{\k} \gr^s(M)< \infty$, 
for all $s$. Hence, for any fixed $s$ and $t$, the 
vector space $E^1_{-s,t}= \gr^s(M)\otimes_{\k} H_{t-s}(X,\k)$ 
is finite-dimensional.  The conclusion follows.
\end{proof}

\begin{remark}
\label{rem:einf}
Let $X$ be a finite-type CW-complex, and suppose $M=\k\pi$.  
Then $M/MI=\k$, and so the $E^{\infty}$ term exists. 
More generally, suppose $\nu\colon \pi\surj G$ is an 
epimorphism. Take $M=\k{G}_{\nu}$, and let $J=I_\k{G}$.  
Then $M/MI=\k{G}/J=\k$, and so $E^{\infty}$ exists.
\end{remark}

\subsection{A non-convergent spectral sequence}
\label{subsec:non conv}
We now give an example of a CW-complex $X$ and 
a $\k\pi$-module $M$ for which 
the assumptions of Proposition \ref{prop:conv ss} 
are satisfied, yet the spectral sequence 
$E^{\bullet}(X,M)$ does not converge.  More precisely, 
for each integer $m\ge 3$, we construct a finite CW-complex 
$X$ of dimension $m$ for which the convergence 
condition \eqref{eq:weak conv} fails for the coefficient 
module $M=\k\pi$, in filtration degree $s=0$ and 
total degree $m$.  

\begin{figure}[t]
\centering
\includegraphics[height=1.8in,width=1.8in]{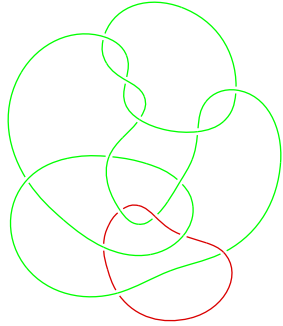} 
\caption{Hillman's $2$-component link}
\label{fig:hillman link}
\end{figure}

\begin{example}
\label{ex:non conv}
Let $L$ be the $2$-component link in $S^3$ depicted in 
Figure \ref{fig:hillman link},\footnote{%
This picture was drawn with the help of the Mathematica 
package {\sf KnotTheory}, by Dror Bar-Natan, and the 
graphical package {\sf Knotilus}, by Ortho Flint and Stuart Rankin.}
and let $\pi=\pi_1(S^3\setminus L)$ 
be the fundamental group of its complement.  In \cite{Hil}, 
J.~Hillman showed that this group is not residually nilpotent; 
that is, if  $\{\Gamma_n(\pi)\}_{n\ge 1}$ 
is the lower central series of $\pi$, then 
$\Gamma_{\omega}(\pi):=\bigcap_{n\ge 1} \Gamma_n(\pi)$ 
is non-trivial.  Pick an element 
$1\ne g\in \Gamma_{\omega}(\pi)$, and set 
$x=g-1\in I$.  From \cite{Q}, we know that 
the map  $\pi\inj \k\pi$, $h\mapsto h-1$, sends 
$\Gamma_n(\pi)$ to $I^n$, for all $n\ge 1$.  
Hence, $x$ belongs to the $I$-adic radical,  
$I^{\omega}:=\bigcap_{n\ge 1} I^n$. 

The link complement has the homotopy type of a 
finite $2$-complex, say $X_0$. Fix an integer $m\ge 3$.   
Using the construction from Example \ref{ex:construction}, 
we may define a CW-complex 
\[
X=(X_0\vee S^{m-1} ) \cup_{\phi_x} e_{m},
\] 
with $[\phi_x]=x\cdot [S^{m-1}] \in \pi_{m-1}(X_0\vee S^{m-1})$. 
Clearly,  $\pi_1(X)=\pi$. We know that
$C_{\bullet}(\wX,\k)= C_{\bullet}(\wX_0,\k)\oplus C(m-1, x)$, and
the differential of $C(m-1, x)$,
$\tilde{\partial}_m \colon \k \pi \to \k \pi$ ,
sends $1\in \k\pi$ to $x \in \k\pi$.  

From the definitions,  
$Z^r_{0,m}=\{ z \in \k\pi\mid \tilde{\partial}_m(z) \in I^r\}$; 
hence,  $1\in \bigcap_{r} Z^r_{0,m}$.  On the other 
hand, $Z^{\infty}_{0,m} =\ker (\tilde{\partial}_m)$. 
Thus, if \eqref{eq:weak conv} were to hold, we 
would  have 
\[
1\in \ker (\tilde{\partial}_m)+I.
\]

Now, $\ker (\tilde{\partial}_m)=\{y \in \k\pi \mid yx=0\}$, and 
this subspace vanishes. Indeed, every link group is locally 
indicable, by \cite{HSh}, and the group algebra of a locally 
indicable group has no zero divisors, by \cite{Hig}.  
We conclude that $1\in I$, a contradiction.
\end{example}

\section{Base change and filtrations on homology}
\label{sect:change rings}

We now turn to the special case when the module $M$ 
is actually a ring $R$, with right $\k\pi$-module structure given 
by extension of scalars.  We analyze the $d^1$ differential 
of $E^1(X,R)$, and two natural filtrations on $H_*(X,R)$. 

\subsection{Base change}
\label{subsec:base change}
Let $\pi$ be a group, and $\k$ a commutative, unital ring. Suppose we 
are given a ring $R$, and a ring homomorphism $\rho\colon \k\pi\to R$ 
(also known as a `base change' or `extension of scalars'). 
Then $R$ becomes a right $\k\pi$-module by setting $r\cdot x =r \rho(x)$, 
for all $x\in \k\pi$ and $r\in R$.  We will denote by $J$ the two-sided 
ideal of $R$ generated by $\rho(I)$, where $I$ is the augmentation ideal 
of $\k\pi$.  

A particular case arises when we are given a group homomorphism, 
$\nu\colon \pi\to G$.  Then, the linear extension of $\nu$ to group 
rings, call it $\bar{\nu}\colon \k\pi \to \k{G}$, is a ring homomorphism. 
As in Example \ref{ex:covers}, we will denote the resulting 
$\k{\pi}$-module by $\k{G}_{\nu}$. 

\begin{lemma}
\label{lem:irj}
With notation as above, suppose $R$ is a commutative ring, 
or $R=\k{G}_{\nu}$, for some epimorphism $\nu\colon \pi\surj G$.  
Then:
\begin{enumerate}
\item \label{i1} 
$R I^{n}=J^{n}$, for all $n\ge 0$. If $R=\k{G}_{\nu}$, then $J=I_\k{G}$. 
\item \label{i2}
$\gr_I(R)=\gr_J(R)$.
\end{enumerate}
\end{lemma}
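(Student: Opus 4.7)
The plan is to reduce both parts to a single computation: identifying the submodule $RI$ inside $R$. By definition of the $\k\pi$-action on $R$, the $\k$-span $RI$ consists of finite sums $\sum_i r_i \rho(x_i)$ with $r_i \in R$ and $x_i \in I$, i.e., $RI = R\cdot \rho(I)$, the \emph{right} ideal of $R$ generated by $\rho(I)$. Since $\rho$ is a ring map, $\rho(I^n) = \rho(I)^n$, and the same description gives $RI^n = R \cdot \rho(I)^n$. On the other hand, $J = R \cdot \rho(I) \cdot R$ is by definition the \emph{two-sided} ideal generated by $\rho(I)$. So the content of part \eqref{i1} is that in both listed situations, the left action of $R$ on $\rho(I)$ is superfluous, and in fact $RI^n$ is already two-sided and agrees with $J^n$.

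First I would handle the commutative case. Here right ideals and two-sided ideals coincide, so $J = R\rho(I) = RI$, and more generally $J^n = (R\rho(I))^n = R\rho(I)^n = RI^n$, using commutativity to pull all the ring factors to the left. Next I would handle the case $R = \k G_\nu$, $\rho = \bar\nu$. The key observation is that $I = I_\k(\pi)$ is freely generated as a $\k$-module by $\{g-1 : g \in \pi\}$, so $\rho(I)$ is $\k$-spanned by $\{\nu(g) - 1 : g \in \pi\}$. Since $\nu$ is an epimorphism, these elements exhaust $\{h - 1 : h \in G\}$, giving $\rho(I) = I_\k G$. This already identifies $J = I_\k G$ (the second assertion of \eqref{i1}), and because $I_\k G$ is a genuine two-sided ideal of $\k G$, we get $RI^n = R \cdot (I_\k G)^n = (I_\k G)^n = J^n$ for all $n \geq 0$.

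Part \eqref{i2} is then formal: the filtration $\{RI^n\}_{n \geq 0}$ used to form $\gr_I(R)$ coincides term-by-term with the filtration $\{J^n\}_{n \geq 0}$ used to form $\gr_J(R)$, so the two associated graded rings are literally equal (not merely isomorphic).

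There is no real obstacle here; the only subtle point is bookkeeping about right versus two-sided ideals, which is precisely what the two hypotheses are designed to trivialize. If one wanted to push beyond the stated cases (say, to a noncommutative $R$ receiving $\k\pi$ without a surjectivity hypothesis), the equality $RI = J$ could genuinely fail, so the restriction in the lemma is essential rather than cosmetic.
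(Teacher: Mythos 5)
Your proof is correct and follows essentially the same route as the paper's: in both arguments the commutative case is settled by the coincidence of right and two-sided ideals, and the group-ring case by observing that surjectivity of $\nu$ forces $\bar{\nu}(I)=I_{\k}G$, which is already a two-sided ideal, after which part \eqref{i2} is immediate from the term-by-term equality of filtrations. The only difference is cosmetic: you spell out the identification $RI^n=R\cdot\rho(I)^n$ explicitly, whereas the paper leaves it implicit.
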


\begin{proof}
\eqref{i1} By definition, $J=R \rho(I) R$, and so the inclusion 
$R I^{n}\subset J^{n}$ always holds.  If $R$ is commutative, the 
reverse inclusion is obvious. 

Now suppose $R=\k{G}_{\nu}$. Recall $I=I_\k\pi$ is generated 
(as a $\k$-module) by all elements of the form $g-1$, with $g\in \pi$. 
Since $\nu$ is surjective, it follows that $\bar{\nu} (I)=I_\k{G}$.  
In particular, $\bar{\nu} (I)$ is a two-sided ideal of $R$, so 
$J=I_\k{G}$ and $J^{n} \subset R I^{n}$. 

\eqref{i2} Follows from \eqref{i1}.
\end{proof}

\subsection{The spectral sequence with $R$-coefficients}
\label{subsec:exr}  
Let $X$ be a connected CW-complex with a single 
$0$-cell $e_0$, and fundamental group $\pi=\pi_1(X,e_0)$. 
Suppose $R$ is a ring, endowed with the right $\k\pi$-module 
structure given by a ring homomorphism  $\rho\colon \k\pi\to R$. 
Then the $I$-adic spectral sequence $E^{\bullet}(X, R)$
is a spectral sequence in the category of left $R$-modules. 
Indeed, $R$ acts on itself by left-multiplication, and this 
action extends in a natural way to each term $E^{r}(X, R)$. 
Furthermore, all differentials $d^r_R\colon E^{r} \to E^{r}$ 
are $R$-linear. 
  
Now assume $R$ satisfies one of the two conditions of 
Lemma~\ref{lem:irj}. As above, let $J$ be the two-sided ideal 
of $R$ generated by $\rho (I)$.   Consider the $R$-chain complex 
$C_{\bullet}(X, R)= R\otimes_{\k} C_{\bullet}(X,\k)$, 
with differential $\tilde{\partial}^R=\id_R \otimes_{\k \pi} \tilde{\partial}$, 
and filtration $F^{\bullet}$ given by \eqref{eq:filtcxm again}. 
By Lemma~\ref{lem:irj}, the terms of this filtration can be 
expressed as
\begin{equation}
\label{eq:jfilt}
F^n C_{\bullet}(X, R)= J^n \otimes_{\k} C_{\bullet}(X,\k), 
\end{equation}
for all $n\ge 0$. Hence, by \eqref{eq:e1},
\begin{equation}
\label{eq:e1stxr}
E^1_{-s, t}(X, R)=  H_{t-s}(X, \gr^s_J (R)). 
\end{equation}

\begin{lemma}
\label{lem:dr lin}
Suppose $R$ is a commutative ring, or $R=\k{G}_{\nu}$, for 
some epimorphism $\nu\colon \pi\surj G$.  Then $E^{\bullet}(X, R)$ 
is a spectral sequence in the category of left $\gr_J(R)$-modules. 
\end{lemma}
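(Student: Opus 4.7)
The plan is to exhibit a compatible left $R$-action on the filtered chain complex $C_\bullet(X,R) = R\otimes_{\k\pi} C_\bullet(\wX,\k)$ and pass to the associated spectral sequence, generalizing the argument of Lemma \ref{lem:d1lin} (which handles $R=\k\pi$ at $r=1$). Left multiplication by $R$ on the first tensor factor commutes, by associativity, with the right $\k\pi$-action $r\cdot x = r\rho(x)$ used to balance $\otimes_{\k\pi}$, so $C_\bullet(X,R)$ acquires a left $R$-module structure. For this action, the boundary $\tilde\partial^R = \id_R \otimes_{\k\pi}\tilde\partial$ is tautologically $R$-linear.

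The crux is the compatibility of this action with the filtration $F^\bullet$, and this is precisely where the hypothesis of the lemma is used. By Lemma \ref{lem:irj}\eqref{i1}, one has $RI^n = J^n$ in both allowed cases, so using identification \eqref{eq:jfilt} we obtain $F^n C_\bullet(X,R) = J^n \cdot C_\bullet(X,R)$, and in particular $J^p\cdot F^n \subseteq F^{n+p}$. Thus $C_\bullet(X,R)$ is a filtered left $R$-module, and its associated graded is a chain complex of left $\gr_J(R)$-modules, with $\tilde\partial^R$ acting by $\gr_J(R)$-linear differentials.

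It remains to transfer this structure to every page $E^r$ and verify $\gr_J(R)$-linearity of $d^r$. Take $a\in J^p$ representing $[a]\in\gr^p_J(R)$ and $z\in F^s$ with $\tilde\partial^R z\in F^{s+r}$, so $z$ represents a class $[z]\in E^r_{-s,t}$. Then $az\in F^{s+p}$ and $\tilde\partial^R(az) = a\,\tilde\partial^R(z) \in a\cdot F^{s+r} \subseteq F^{s+p+r}$, so $az$ is an $r$-cycle in filtration degree $s+p$; the analogous computation sends the subspace of $r$-boundaries inside $F^s$ into its counterpart in $F^{s+p}$. The assignment $[a]\cdot[z] := [az]\in E^r_{-s-p,\,t+p}$ is therefore well-defined, and the same identity $\tilde\partial^R(az) = a\,\tilde\partial^R(z)$ immediately yields $d^r([a]\cdot[z]) = [a]\cdot d^r([z])$. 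The inductive step from $E^r$ to $E^{r+1}=H(E^r,d^r)$ is then automatic, and the $r=1$ case recovers Lemma \ref{lem:d1lin}. The only non-formal ingredient is the identity $RI^n=J^n$ supplied by Lemma \ref{lem:irj}; I regard this as the main obstacle, in the sense that it is exactly the place where the hypothesis on $R$ is essential and where a naive attempt to generalize to an arbitrary right $\k\pi$-module $M$ in place of $R$ would break down.
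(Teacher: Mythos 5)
Your proposal is correct and follows essentially the same route as the paper: both use the identification $F^n C_{\bullet}(X,R)=J^n\cdot C_{\bullet}(X,R)$ (resting on Lemma \ref{lem:irj}\eqref{i1} via \eqref{eq:jfilt}) to deduce $J^p\cdot Z^r_{-s}\subseteq Z^r_{-s-p}$, then descend the left $R$-action through the standard presentation $E^r_{-s}=Z^r_{-s}/(Z^{r-1}_{-(s+1)}+\tilde{\partial}^R Z^{r-1}_{-(s+1-r)})$ and read off $\gr_J(R)$-linearity of $d^r$ from the $R$-linearity of $\tilde{\partial}^R$. Your identification of $RI^n=J^n$ as the one non-formal ingredient matches the paper's use of the hypothesis exactly.
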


\begin{proof}
Let $Z^r_{-s}=\{z\in F_{-s} \mid \tilde{\partial}^R z \in F_{-s-r}\}$. 
By construction, the terms of the spectral sequence are given by 
\begin{equation}
\label{eq:ers}
E^r_{-s}= Z^r_{-s}/ (Z^{r-1}_{-(s+1)} + 
\tilde{\partial}^R Z^{r-1}_{-(s+1-r)}).
\end{equation} 

From \eqref{eq:jfilt}, we find that $J^n \cdot Z^r_{-s} \subset Z^r_{-s-n}$, 
for all $n\ge 0$. Using these inclusions, together with \eqref{eq:ers}, 
we infer that 
\[
J^n \cdot E^r_{-s} \subset E^r_{-s-n}, \quad\text{for all $n\ge 0$}.  
\]
This allows us to define a natural $\gr_J(R)$-module structure on 
${}_qE^r= \bigoplus_{s\ge 0} E^r_{-s, s+q}$, for all $q\ge 0$, 
and thus on $E^r=\bigoplus_{q\ge 0}  {}_qE^r$, for all $r\ge 1$. 
(The $\gr_J(R)$-module structure on 
${}_qE^1= \bigoplus_{s\ge 0}  H_q(X, \gr_J^s(R))$ 
is induced by left-multiplication.)

Using once again \eqref{eq:ers}, it is readily checked that 
the differentials $d^r_R\colon {}_qE^r \to {}_{q-1}E^r$ act 
$\gr_J(R)$-linearly, for all $r\ge 1$. 
\end{proof}

\subsection{The differential of $E^1(X, \k{G}_{\nu})$}
\label{subsec:d1 again}

In this subsection, we assume that either $\k= \Z$ and 
$H_*(X, \Z)$ is torsion-free, or $\k$ is a field. In this case,
${}_q E^1 = \gr_J(R) \otimes_{\k} H_q(X, \k)$, by \eqref{eq:e1 new};  
in particular, ${}_q E^1$ is a free $\gr_J(R)$-module, for all $q\ge 0$. 

The above Lemma tells us that, in order to describe 
$d^1_R$ completely, it is enough to identify 
its effect on free $\gr_J (R)$-module generators. 
We will do this now, in the case when the ring 
$R$ is a group-ring, $\k{G}$, with $\k\pi$-module 
structure given by $\bar{\nu}\colon \k\pi\surj \k{G}$, the linear 
extension of an epimorphism $\nu\colon \pi\surj G$. 

As above, let $J= I_{\k} G$ be the two-sided ideal of $\k{G}$ 
generated by $\bar{\nu}(I)$, where $I=I_{\k}\pi$. Let 
\[
\nu_*\colon H_1(X,\k) \longrightarrow H_1(G,\k)
\] 
be the homomorphism induced by $\nu$ in homology with coefficients 
in $\k$. In view of Lemma \ref{lem:dr lin}, the next result describes 
the differential $d^1_G$ of $E^1(X, \k{G}_{\nu})$, solely in terms of 
$\nu_*$ and the comultiplication map $\nabla_X$ in $H_*(X,\k)$. 

\begin{prop}
\label{prop:d1 kg}
The restriction of $d^1_{G}$ to $1\otimes H_q(X, \k)$ is the 
composite
\[
\xymatrixcolsep{30pt}
\xymatrix{H_q(X, \k) \ar^(.36){\nabla_X}[r] 
& H_1(X, \k)\otimes_{\k} H_{q-1}(X, \k)
\ar^{\nu_{*} \otimes \id}[r] & H_1(G, \k)\otimes_{\k} H_{q-1}(X, \k)}.
\]
\end{prop}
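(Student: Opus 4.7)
The plan is to deduce this directly from Theorem \ref{thm:d1map} by specializing the coefficient module to $M = \k G_{\nu}$ and restricting the formula to the free $\gr_J(\k G)$-generators $1 \otimes H_q(X,\k)$. By Lemma \ref{lem:dr lin}, since $d^1_G$ is $\gr_J(\k G)$-linear, checking its effect on these generators determines it completely.

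First I will identify the relevant graded pieces. By Lemma \ref{lem:irj}, $MI = \k G \cdot \bar{\nu}(I) = J$, so $\gr^0(M) = \k G / J \cong \k$, generated by the class of $1 \in \k G$. By \eqref{eq:gr1}, we have natural identifications $\gr^1(\k\pi) \cong H_1(\pi,\k) = H_1(X,\k)$ and $\gr^1(\k G) \cong H_1(G,\k)$. In particular, the factor $\gr^0(M) \otimes_{\k} H_q(X,\k)$ appearing at the top of the diagram in Theorem \ref{thm:d1map} (for $p=0$) is canonically identified with $1 \otimes H_q(X,\k)$.

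Next I will trace a class $1 \otimes h \in \gr^0(M) \otimes_{\k} H_q(X,\k)$ through the decomposition of $d^1$ provided by Theorem \ref{thm:d1map}. The map $\id \otimes \nabla_X$ sends $1 \otimes h$ to $1 \otimes \nabla_X(h)$, which under the canonical isomorphism corresponds to the element $\nabla_X(h) \in H_1(X,\k) \otimes_{\k} H_{q-1}(X,\k)$, viewed inside $(\gr^0(M) \otimes_{\k} \gr^1(\k\pi)) \otimes_{\k} H_{q-1}(X,\k)$. It then remains to identify the restriction $\gr(\mu_M) \colon \gr^0(M) \otimes_{\k} \gr^1(\k\pi) \to \gr^1(M)$ with $\nu_* \colon H_1(X,\k) \to H_1(G,\k)$.

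This last identification is the only point requiring verification, and it is essentially tautological. By definition of the $\k\pi$-module structure on $\k G_{\nu}$, the multiplication map $\mu_M$ sends $h \otimes g \mapsto h \cdot \bar{\nu}(g)$. Restricted to elements of the form $1 \otimes (g-1)$ with $g \in \pi$, it produces $\nu(g) - 1 \in J$. The induced map $I/I^2 \to J/J^2$ thus sends $[g-1]$ to $[\nu(g)-1]$, which under the identifications from \eqref{eq:gr1} is precisely $\nu_*$, by naturality of the isomorphism $\gr^1(\k\pi) \cong H_1(\pi,\k)$ with respect to group homomorphisms. Combining the two steps yields $d^1_G(1 \otimes h) = (\nu_* \otimes \id)(\nabla_X(h))$, as asserted. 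No serious obstacle is anticipated; the argument is a bookkeeping exercise in the formula of Theorem \ref{thm:d1map} once the identifications of the low-degree associated graded pieces are in place.
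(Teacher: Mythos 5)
Your proposal is correct and follows essentially the same route as the paper: both reduce the statement to Theorem \ref{thm:d1map} with $M=\k G_{\nu}$ and then identify the restriction of $\gr(\mu_M)$ to $\gr^0_J(\k G)\otimes_{\k}\gr^1_I(\k\pi)$ with $\gr^1(\bar{\nu})$, hence with $\nu_*$ via the identifications of \eqref{eq:gr1}. The only cosmetic difference is that you invoke Lemma \ref{lem:dr lin} up front to note that the generators determine $d^1_G$, which the paper defers to the surrounding discussion.
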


\begin{proof}
Let $\mu\colon \k{G} \otimes_{\k} \k\pi \to \k{G}$ 
the multiplication map of the $\k\pi$-module $\k{G}$, given on 
group elements by $\mu(g \otimes x)= g\nu(x)$.  In view of 
Theorem \ref{thm:d1map}, we only need to identify the map 
\[
\gr (\mu) \colon \gr^0_J (\k{G})\otimes_{\k} \gr^1_I (\k\pi) 
\to \gr^1_J (\k{G}).
\]

Under the identification $\gr^0_J(\k{G})=\k$, this map 
coincides with $\gr^1 (\bar{\nu})\colon \gr^1_I (\k \pi) \to 
\gr^1_J (\k{G})$. In turn, under the identifications 
$\gr^1_I (\k\pi) = H_1(X,\k)$ and 
$\gr^1_J (\k{G}) = H_1(G,\k)$ provided by \eqref{eq:gr1}, 
the map $\gr^1 (\bar{\nu})$ coincides with $\nu_{*}$. 
This finishes the proof.
\end{proof}

The next corollary describes (under some mild hypothesis) the dual 
of the differential $d^1_G$, solely in terms of the cup-product structure 
on $H^*(X,\k)$, and the homomorphism $\nu^*\colon H^1(G,\k)\inj  
H^1(X,\k)$ induced in cohomology by the epimorphism 
$\nu\colon \pi\surj G$. 

\begin{corollary}
\label{cor:d1 transp}
Suppose $X$ is a finite-type CW-complex, and 
$H_1(G,\k)$ is a free $\k$-module.  
Let $\delta^1_G$ be the transpose of the restriction of 
$d^1_G$ to $1\otimes H_q(X, \k)$.  We then have a 
commuting triangle
\[
\xymatrixcolsep{25pt}
\xymatrix{
H^1(X, \k)\otimes_{\k} H^{q-1}(X, \k) \ar^(.6){\cup_X}[rr]&&
H^q(X, \k)\\
H^1(G, \k)\otimes_{\k} H^{q-1}(X, \k) \ar@{^{(}->}^{\nu^* \otimes \id}[u]
\ar_(.55){\delta^1_G}[urr] & 
}
\]
\end{corollary}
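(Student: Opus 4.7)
The plan is to simply dualize Proposition~\ref{prop:d1 kg} and check that the identifications of the duals proceed cleanly under the stated hypotheses. First I would invoke Proposition~\ref{prop:d1 kg} to rewrite the restriction of $d^1_G$ to $1\otimes H_q(X,\k)$ as the two-step composite
\[
H_q(X,\k) \xrightarrow{\nabla_X} H_1(X,\k)\otimes_{\k} H_{q-1}(X,\k) \xrightarrow{\nu_*\otimes \id} H_1(G,\k)\otimes_{\k} H_{q-1}(X,\k).
\]
Taking transposes reverses the order of composition, so $\delta^1_G$ factors as the transpose of $\nu_*\otimes\id$ followed by the transpose of $\nabla_X$.

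Next, I would identify each of these transposes. For the first, the remark immediately after the statement of Theorem~\ref{thm:d1map} shows that, when $X$ is a finite-type CW-complex, $\nabla_X$ is dual to the cup-product map $\cup_X\colon H^1(X,\k)\otimes_{\k} H^{q-1}(X,\k)\to H^q(X,\k)$. For the second, the transpose of $\nu_*\otimes \id$ is $\nu^*\otimes \id$: the hypothesis that $H_1(G,\k)$ is a free $\k$-module (combined with the finite-type assumption on $X$, so that $H_{q-1}(X,\k)$ is finitely generated) ensures via the Universal Coefficient Theorem and the usual $\Hom$-tensor identities that the dual of $H_1(G,\k)\otimes_\k H_{q-1}(X,\k)$ is canonically $H^1(G,\k)\otimes_\k H^{q-1}(X,\k)$, with no Ext contributions, and that dualizing $\nu_*$ returns $\nu^*$. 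That $\nu^*$ is injective comes from $\nu$ being surjective, so the arrow in the triangle can indeed be drawn as a monomorphism.

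Putting the two identifications together gives exactly the commuting triangle
\[
\xymatrixcolsep{25pt}
\xymatrix{
H^1(X, \k)\otimes_{\k} H^{q-1}(X, \k) \ar^(.6){\cup_X}[rr]&&
H^q(X, \k)\\
H^1(G, \k)\otimes_{\k} H^{q-1}(X, \k) \ar@{^{(}->}^{\nu^* \otimes \id}[u]
\ar_(.55){\delta^1_G}[urr] &
}
\]
which is the content of the corollary. There is no real obstacle here; the only point that deserves explicit mention is the bookkeeping of the freeness/finite-type hypotheses that make the duality formal, which is precisely why these assumptions are imposed in the statement. So the proof amounts to a one-paragraph dualization argument.
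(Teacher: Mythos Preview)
Your proposal is correct and matches the paper's approach: the corollary is stated in the paper without an explicit proof, as it is treated as an immediate dualization of Proposition~\ref{prop:d1 kg}, which is precisely the argument you have written out.
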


\subsection{Filtrations on homology}
\label{subsec:two filt}

Let $X$ be a connected CW-complex as before, $\k$ a commutative ring, and let $M$ be 
a right $\k\pi$-module. The $I$-adic filtration on $C_{\bullet}(X,M)$ 
naturally defines a descending filtration on $H_*(X,M)$.  The $n$-th 
term of the filtration is given by 
\begin{equation}
\label{eq:filthom}
F^{n} H_r(X,M) = 
\im \big(H_r(F^{n} C_{\bullet}(X, M)) \longrightarrow
H_r(X,M)\big).   
\end{equation}
Clearly, $F^0 H_r(X, M)=H_r(X, M)$. As usual, we will write 
$F_{-n}=F^n$.  The associated graded object of this filtration 
will be denoted by $\gr_F (H_*(X,M))$. 

The filtration \eqref{eq:filthom} need not be separated, 
even when the spectral sequence converges.  
We shall illustrate this phenomenon in Example \ref{ex:p3}.   

Now consider the case when $M$ is a ring $R$, with right 
$\k\pi$-module structure given by a base change $\rho\colon \k\pi\to R$. 
Let $J$ be the two-sided ideal generated by $\rho(I)$. 
As before, we will assume that either $R$ is commutative, or 
$R=\k{G}$, and $\rho=\bar{\nu}$, for some epimorphism 
$\nu\colon \pi\surj G$. 

Using \eqref{eq:jfilt}, we find that the spectral sequence filtration 
on $H_{*}(X, R)$, as defined in \eqref{eq:filthom}, is given by
\begin{equation}
\label{eq:ffilt}
F^{n}H_{*}(X, R)=\big( \ker \tilde{\partial}^R \cap 
J^n C_{\bullet}(X,R)+\im \tilde{\partial}^R \big)/\im \tilde{\partial}^R. 
\end{equation}
This puts a natural $\gr_J(R)$-module 
structure on $\gr_F H_*(X, R)$, since plainly
\begin{equation}
\label{eq:hfilts}
J^k\cdot (J^n C_{\bullet}(X,R) \cap \ker \tilde{\partial}^R)
\subset J^{k+n} C_{\bullet}(X,R) \cap \ker \tilde{\partial}^R, 
\quad \forall k, n. 
\end{equation}

\begin{remark}
\label{rem:gr einf}
Suppose $X$ is of finite type, $\k$ is a field, and 
$\dim_{\k}R/J< \infty$ (this happens automatically when 
$R=\k{G}_{\nu}$, in which case $R/J=\k$).  Fix an integer 
$q\ge 0$. Applying Proposition \ref{prop:conv ss}, and 
making use of \eqref{eq:inftyb}, we obtain an inclusion 
\begin{equation}
\label{eq:inftygr}
\gr_F H_q(X, R)\subset {}_qE^{\infty}= 
\bigoplus_{s\ge 0}E^{\infty}_{-s, s+q} ,
\end{equation}
which is  compatible with the respective $\gr_J(R)$-actions. 
\end{remark}

\subsection{Comparing the two filtrations}
\label{subsec:comp filt}

We now have two filtrations on $H_q(X, R)$:
the spectral sequence filtration $F^{\bullet}H_q(X, R)$, 
given by \eqref{eq:ffilt},
and the $J$-adic filtration, $J^{\bullet}\cdot H_q(X, R)$.
The two filtrations  are related as follows.

\begin{lemma}
\label{lem:if} 
$J^k \cdot  F^{n} H_*(X,R) \subseteq F^{k+n} H_*(X,R)$, 
for all $k, n \ge 0$. 
\end{lemma}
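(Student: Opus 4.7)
The plan is elementary and follows directly from unpacking the definitions. The key observation is that the chain complex $C_\bullet(X,R) = R \otimes_{\k\pi} C_\bullet(\wX,\k)$ carries a left $R$-module structure, induced by left multiplication on the $R$-factor, and the differential $\tilde\partial^R = \id_R \otimes_{\k\pi} \tilde\partial$ is left $R$-linear (hence $J$-linear). This passes to a left $R$-module structure on $H_*(X,R)$, with respect to which the $J$-adic filtration on the right side of the claim is defined.

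To establish the inclusion, I would proceed as follows. Let $\alpha \in F^n H_r(X,R)$ and $y_1, \dots, y_k \in J$; by linearity it suffices to show $(y_1\cdots y_k)\cdot \alpha \in F^{k+n}H_r(X,R)$. Using the characterization \eqref{eq:ffilt}, choose a cycle $z \in F^n C_\bullet(X,R)$ representing $\alpha$; by \eqref{eq:jfilt} we may write $z \in J^n \otimes_{\k} C_\bullet(X,\k)$. Setting $y = y_1\cdots y_k \in J^k$, left multiplication yields
\[
y\cdot z \ \in\ J^k \cdot (J^n \otimes_\k C_\bullet(X,\k)) \ \subseteq\ J^{k+n}\otimes_\k C_\bullet(X,\k) \ =\ F^{k+n} C_\bullet(X,R),
\]
where the middle inclusion uses that $J$ is a two-sided ideal of $R$, so $J^k J^n \subseteq J^{k+n}$. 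By the left $R$-linearity of $\tilde\partial^R$, we have $\tilde\partial^R(y\cdot z) = y\cdot \tilde\partial^R z = 0$, so $y\cdot z$ is a cycle in $F^{k+n}C_\bullet(X,R)$. Its class in $H_r(X,R)$ is $y\cdot \alpha$, which therefore lies in $F^{k+n}H_r(X,R)$, as desired.

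There is no real obstacle here; the only point requiring a moment's care is the $R$-linearity of $\tilde\partial^R$, which follows immediately from the identification $C_\bullet(X,R) = R\otimes_\k C_\bullet(X,\k)$ of \eqref{eq:equiv again} together with the description of the differential given just after \eqref{eq:jfilt}. The compatibility \eqref{eq:hfilts} already records the analogous inclusion at the cycle level, and the above argument simply observes that this descends to homology.
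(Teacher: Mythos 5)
Your proof is correct and is essentially the paper's own argument: the paper simply cites the description \eqref{eq:ffilt} of the filtration on homology together with the cycle-level inclusion \eqref{eq:hfilts}, and your write-up just unpacks those two facts (lifting a class to a cycle in $J^n\otimes_\k C_\bullet(X,\k)$, multiplying by $J^k$, and using left $R$-linearity of $\tilde\partial^R$ to descend to homology). No further comment is needed.
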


\begin{proof}
Our claim follows from \eqref{eq:ffilt} and \eqref{eq:hfilts}.
\end{proof}

\begin{corollary}
\label{cor:two filtrations} 
The $J$-adic filtration on $H_*(X,R)$ is finer than the 
filtration defined by $I$-adic spectral sequence:
\[
J^k \cdot H_*(X,R) \subseteq F^{k} H_*(X,R), \quad 
\text{for all $k \ge 0$}.
\] 
\end{corollary}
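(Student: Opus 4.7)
The plan is to derive this corollary as an immediate specialization of the preceding Lemma~\ref{lem:if}. Specifically, I would apply the lemma with $n=0$ to obtain
\[
J^k \cdot F^{0} H_*(X,R) \subseteq F^{k} H_*(X,R),
\]
and then argue that $F^{0} H_*(X,R) = H_*(X,R)$.

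For the second ingredient, I would unwind the definition \eqref{eq:ffilt} of $F^n H_*(X,R)$ at $n=0$. By \eqref{eq:jfilt}, $F^0 C_{\bullet}(X,R) = J^0 \otimes_{\k} C_{\bullet}(X,\k) = R \otimes_{\k} C_{\bullet}(X,\k) = C_{\bullet}(X,R)$, so the structure map $H_r(F^0 C_{\bullet}(X,R)) \to H_r(X,R)$ is the identity. Hence its image, which by definition is $F^{0} H_r(X,R)$, coincides with all of $H_r(X,R)$. Substituting gives the desired inclusion $J^k \cdot H_*(X,R) \subseteq F^{k} H_*(X,R)$.

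There is no real obstacle here: all the work has been done in Lemma~\ref{lem:if}, whose proof in turn rests on the bookkeeping \eqref{eq:ffilt} and \eqref{eq:hfilts}. The only minor care needed is to confirm the identification $F^0 H_*(X,R) = H_*(X,R)$ from the definition, which as noted above is immediate from $J^0 = R$.
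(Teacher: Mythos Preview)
Your proposal is correct and is exactly the paper's approach: the corollary is obtained from Lemma~\ref{lem:if} by setting $n=0$, using the identification $F^{0} H_*(X,R) = H_*(X,R)$ already recorded after \eqref{eq:filthom}. There is nothing to add.
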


In general, the two filtrations differ, even in the case when 
$R=\k{A}$, with $A$ an abelian group, as the next example shows. 
On the other hand, we will see in Lemma \ref{lem:equal filt}  
that the two filtrations coincide for $A=\Z$.  

\begin{example}
\label{ex:f2}
Consider the space $X=S^1 \vee S^1$,  and its universal 
abelian cover, $X^{\ab}\to X$, classified by the map 
$\ab\colon \pi_1(X) \to \Z^2$.   The chain complex 
$C_{\bullet}(X^{\ab}, \k)$ takes the form 
$\tilde{\partial}_1 \colon \Lambda^2 \to \Lambda$, where 
$\Lambda=\k[t_1^{\pm 1}, t_2^{\pm 1}]$ and 
$\tilde{\partial}_1=\left(\begin{smallmatrix} 
t_1-1\\ t_2-1\end{smallmatrix}\right)$.
Hence,
\[
H_1(X^{\ab},\k) = \ker \tilde{\partial}_1=  
\im (t_2-1\, ,  1-t_1) \cong \Lambda.  
\]
It follows that  $J^s \cdot H_1(X^{\ab},\k)\cong J^s$.   
On the other hand, $F_{-s}H_1(X^{\ab},\k) \cong J^{s-1}$. 
Thus, the two filtrations on $H_1(X^{\ab},\k)$  
do not coincide.
\end{example}

\section{The homological Reznikov spectral sequence}
\label{sec:kzp}

In this section, we specialize to the case where the coefficient 
module is the group-ring $\k G$ of a cyclic group of prime-power 
order $p^r$, over a field $\k$ of characteristic $p$.  
When $X=K(\pi,1)$ is an Eilenberg--MacLane space 
and $G=\Z_p$, the resulting spectral sequence is the   
homological version of a cohomology spectral sequence 
described by Reznikov in \cite{Re}.

\subsection{A convergent spectral sequence}
\label{subsec:rez ss}

Let $X$ be a connected CW-complex, and let $\pi=\pi_1(X)$.  
Assume we have an epimorphism $\nu\colon \pi\surj \Z_{p^r}$. 
Fix a field $\k$ of characteristic $p$, and denote by 
$(\k\Z_{p^r})_{\nu}$ the group-ring of $\Z_{p^r}$, 
viewed as a right $\k\pi$-module via the linear extension 
$\bar{\nu}\colon \k\pi\surj \k \Z_{p^r}$. Let $I=I_\k(\pi)$ 
and $J=I_\k( \Z_{p^r} )$ be the respective augmentation ideals. 

\begin{lemma}
\label{lem:conv rez} 
The $I$-adic filtration on $C_{\bullet}(X,(\k \Z_{p^r})_{\nu})$ is finite.
\end{lemma}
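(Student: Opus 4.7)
The plan is to show that the augmentation ideal $J=I_\k(\Z_{p^r})$ is nilpotent, since the filtration on $C_\bullet(X,(\k\Z_{p^r})_\nu)$ is controlled by powers of $J$ via equation \eqref{eq:jfilt} (which applies because $\k\Z_{p^r}$ arises from the epimorphism $\nu$, so Lemma \ref{lem:irj} gives $F^n = J^n \otimes_\k C_\bullet(X,\k)$).

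First I would identify the group ring explicitly: if $\sigma$ is a generator of $\Z_{p^r}$, then $\k\Z_{p^r} \cong \k[t]/(t^{p^r}-1)$ via $\sigma \mapsto t$. Setting $u = t-1$, the augmentation ideal $J$ corresponds to the principal ideal $(u)$, since $J$ is $\k$-spanned by the elements $\sigma^i - 1 = (1+u)^i - 1 \in (u)$.

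The key algebraic point is the Frobenius/freshman dream identity in characteristic $p$: since $\ch \k = p$, we have
\[
t^{p^r} - 1 = (t-1)^{p^r} = u^{p^r}
\]
in $\k[t]$. Hence $\k\Z_{p^r} \cong \k[u]/(u^{p^r})$, and therefore $J^{p^r} = (u^{p^r}) = 0$.

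Combining this with \eqref{eq:jfilt}, we get $F^{p^r} C_\bullet(X,(\k\Z_{p^r})_\nu) = J^{p^r}\otimes_\k C_\bullet(X,\k) = 0$, so the $I$-adic filtration terminates at step $p^r$. There is really no obstacle here; the only substantive input is the characteristic-$p$ identity $(t-1)^{p^r} = t^{p^r}-1$, which is exactly what forces nilpotence of the augmentation ideal for a $p$-group over a field of characteristic $p$. (More generally, the same argument shows $I_\k(G)$ is nilpotent for any finite $p$-group $G$ over such a field, which is the classical reason why the modular group algebra of a finite $p$-group is local.)
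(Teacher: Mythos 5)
Your proposal is correct and follows essentially the same route as the paper's proof: invoke \eqref{eq:jfilt} to identify $F^n$ with $J^n\otimes_{\k}C_{\bullet}(X,\k)$, identify $\k\Z_{p^r}$ with $\k[t]/(t^{p^r}-1)$ so that $J=(t-1)$, and use the characteristic-$p$ identity $(t-1)^{p^r}=t^{p^r}-1$ to conclude $J^{p^r}=0$, hence $F^{p^r}=0$. No gaps.
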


\begin{proof}
By \eqref{eq:jfilt}, 
$F^n C_{\bullet}(X,(\k\Z_{p^r})_{\nu}) =J^n \otimes _{\k}  
C_{\bullet}(X,\k)$, for all $n\ge 0$. 
Now identify $\k \Z_{p^r}=\k[t]/(t^{p^r}-1)$, and 
note that $J$ is the ideal generated by $(t-1)$.  By the 
binomial formula, $(t-1)^{p^r}=t^{p^r}-1$ over $\k$.  
Thus, $J^{p^r} =0$.  Putting things together, we conclude 
that $F^{p^r} C_{\bullet}(X,(\k \Z_{p^r})_{\nu}) =0$. 
\end{proof}

Therefore, $E^{\bullet}(X,(\k\Z_{p^r})_{\nu})$ converges. 
In fact, the spectral sequence collapses in finitely many 
steps:  
$\bigoplus_{s+t=q} E^{p^r}_{s,t}(X,(\k\Z_{p^r})_{\nu})= 
H_q (X,(\k \Z_{p^r})_{\nu})$. 
For another situation when the spectral sequence 
converges, see Proposition~\ref{prop:ss kzhat} below. 

Nevertheless, the spectral sequence filtration, 
$F^{\bullet} H_*(X,(\k \Z_{p^r})_{\nu})$, does not 
always coincide with the $J$-adic filtration, 
$J^{\bullet}\cdot H_*(X,(\k \Z_{p^r})_{\nu})$, 
as the following example shows.  

\begin{example}
\label{ex:rez filt}
Let $X=S^1$, and consider the canonical projection 
$\nu\colon \pi_1(S^1)=\Z\surj \Z_p$. Identifying $\F_p \Z_p$ 
with $\Lambda_p=\F_p[t]/(t^p-1)$, the chain complex 
$C_{\bullet}(S^1,(\F_p \Z_p)_{\nu})$ takes the form 
$\tilde{\partial}_1 \colon \Lambda_p \to \Lambda_p$, 
with $\tilde{\partial}_1=(t-1)$.
It follows that $H_1(S^1, \Lambda_p)=\ker( \tilde\partial_1)$ 
is the ideal generated by the norm element, 
$N=1+\cdots +t^{p-1}$. Hence, $J \cdot H_1(S^1, \Lambda_p) =0$. 

On the other hand, $N$ augments to $0$ in $\F_p$, 
and so $N\in J\cap \ker(\tilde{\partial}_1)=F^1 H_1(S^1, \Lambda_p )$. 
Thus, $J \cdot H_1(S^1, \Lambda_p) \ne F^1 H_1(S^1, \Lambda_p )$.
\end{example}

\subsection{The Reznikov spectral sequence}
\label{subsec:rez e1}

We now specialize to the case when $G=\Z_p$ and $\k=\F_p$. 
Identify $\F_p\Z_p$ with $\F_p[t]/(t^p-1)$ and $\gr_J (\F_p \Z_p)$ 
with $\F_p [x]/(x^p)$, where $x= t-1$. 
Let  $\nu\colon \pi\surj \Z_p$ be an epimorphism, and consider 
the spectral sequence $E^{\bullet}(X,(\F_p\Z_p)_{\nu})$.  
By \eqref{eq:e1stxr}, the first term is 
\begin{equation}
\label{eq:e1kzp}
E^1_{-s,t} = \gr^s_J(\F_p\Z_p) \otimes_{\F_p}
H_{t-s}(X,\F_p) =  H_{t-s}(X,\F_p),
\end{equation}
for $t-s\ge 0$ and $0\le s\le p-1$, and $0$ otherwise.  Here 
we used that $\gr^s_J(\F_p\Z_p)=\F_p$, with basis $x^s$, 
in the specified range. 

To identify the $d^1$ differential, let $\nu_{*}\colon 
H_1(X,\F_p) \to H_1(\Z_p,\F_p)=\F_p$ be the homomorphism 
induced by $\nu$ in homology, and let $\nu_{\F_p}\in H^1(X,\F_p)$ 
be the corresponding cohomology class. 

\begin{prop}
\label{prop:d1zp}
Under the identification from \eqref{eq:e1kzp}, 
the differential $d^1\colon E^1_{-s,s+q}  \to 
E^1_{-s-1,s+q}$ is the composite 
\[
\xymatrixcolsep{28pt}
\varrho_q\colon \xymatrix{
H_q(X,\F_p) \ar[r]^(.34){\nabla_X} & H_1(X,\F_p) \otimes_{\F_p}
H_{q-1}(X,\F_p) \ar[r]^(.55){\nu_{*} \otimes \id} 
& \F_p\otimes_{\F_p}H_{q-1}(X,\F_p)
}.
\]
If, moreover, $X$ is of finite type, then the dual $\delta^1=(d^1)^{*}$ 
coincides with the left-multiplication map 
\[
\cdot \nu_{\F_p} \colon  H^{q-1}(X,\F_p) \to H^{q}(X,\F_p).
\]
\end{prop}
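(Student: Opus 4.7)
The proof is largely a specialization and reassembly of previously established results, so the plan is to set up the identifications carefully rather than do any new computations.

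The plan is first to invoke Proposition \ref{prop:d1 kg} with $G=\Z_p$ and $\k=\F_p$. It identifies the restriction of $d^1_G$ to $1\otimes H_q(X,\F_p)$ as the composite $(\nu_*\otimes\id)\circ\nabla_X$; using the canonical isomorphism $H_1(\Z_p,\F_p)\cong\F_p$ coming from \eqref{eq:gr1} (so that $\gr^1_J(\F_p\Z_p)$ is $\F_p$ with basis $x=t-1$), this is precisely the map $\varrho_q$. Note that the hypothesis of Proposition \ref{prop:d1 kg} is automatic here, since we are working over a field.

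Next I would extend from $s=0$ to all $s$ in the range $0\le s\le p-1$ by $\gr_J(\F_p\Z_p)$-linearity, which holds by Lemma \ref{lem:dr lin}. Since $\gr^s_J(\F_p\Z_p)=\F_p\cdot x^s$, the identification \eqref{eq:e1kzp} is $x^s\otimes h\leftrightarrow h$, and $\gr_J$-linearity yields $d^1(x^s\otimes h)=x^{s+1}\otimes \varrho_q(h)$; under the identification this reads as $\varrho_q\colon H_q(X,\F_p)\to H_{q-1}(X,\F_p)$, as claimed.

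For the dual statement, assume $X$ is of finite type, so that Universal Coefficients applies cleanly over $\F_p$. Dualizing the composite defining $\varrho_q$, the dual of $\nabla_X$ is the cup product $\cup_X\colon H^1(X,\F_p)\otimes H^{q-1}(X,\F_p)\to H^q(X,\F_p)$ (this is exactly the identification recorded right after Theorem \ref{thm:d1map}), while the dual of $\nu_*\otimes \id$ is the map $H^{q-1}(X,\F_p)\to H^1(X,\F_p)\otimes H^{q-1}(X,\F_p)$ sending $\alpha\mapsto \nu_{\F_p}\otimes\alpha$, by definition of the cohomology class $\nu_{\F_p}\in H^1(X,\F_p)$ associated to $\nu_*\in \Hom(H_1(X,\F_p),\F_p)$. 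Composing these two duals gives $\alpha\mapsto \nu_{\F_p}\cup\alpha$, i.e.\ left-multiplication by $\nu_{\F_p}$.

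There is no real obstacle here: the work was already done in Proposition \ref{prop:d1 kg} and Lemma \ref{lem:dr lin}, and the only thing to verify is that the identifications $\gr^s_J(\F_p\Z_p)\cong\F_p$ and $H_1(\Z_p,\F_p)\cong\F_p$ are compatible with one another (which they are, via \eqref{eq:gr1}) and with the cohomology class $\nu_{\F_p}$. The mildest care is needed in the dualization step to make sure the Kronecker pairing conventions used implicitly by $\nu_{\F_p}$ match those used to pass from $\nabla_X$ to $\cup_X$, but this is routine since both identifications come from the same Universal Coefficients isomorphism.
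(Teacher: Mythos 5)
Your proposal is correct and follows essentially the same route as the paper: the first statement is Proposition \ref{prop:d1 kg} specialized to $G=\Z_p$, $\k=\F_p$ (with the identification $\F_p\cong H_1(\Z_p,\F_p)\cong J/J^2=\F_p\cdot x$), extended to all $s$ by the $\gr_J$-linearity of Lemma \ref{lem:dr lin}. Your explicit dualization in the second part is exactly the content of Corollary \ref{cor:d1 transp}, which is what the paper cites there.
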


\begin{proof}
For the first statement, use Proposition \ref{prop:d1 kg} with 
$G= \Z_p$ and $\k=\F_p$, noting that under the identifications 
$\F_p \cong H_1(\Z_p, \F_p) \cong J/J^2 \cong \F_p \cdot x$, 
the unit $1$ corresponds to $x$. The second statement 
follows from Corollary \ref{cor:d1 transp}.
\end{proof}

In the particular case when $X=K(\pi,1)$, we recover 
the spectral sequence from \cite[Theorem~13.1]{Re}, 
in homological form. 

\begin{corollary}
\label{cor:rez ss}
Let $1\to K \to  \pi  \xrightarrow{\nu}  \Z_p \to  1$ be an exact 
sequence of groups. There is then a spectral sequence 
\[
E^1_{-s,s+q} = H_{q} (\pi,\F_p)\Rightarrow H_{q} (K, \F_p), 
\]
$q\ge 0$ and $0\le s\le p-1$, with differential 
$d^1 \colon H_q(\pi,\F_p) \to H_{q-1}(\pi,\F_p)$ 
equal to $\varrho_q=(\nu_{*} \otimes \id) \circ \nabla_{K(\pi,1)}$, 
and converging in finitely many steps.
\end{corollary}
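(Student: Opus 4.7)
The plan is to deduce this Corollary as an essentially direct application of the machinery built earlier in the section, specialized to an Eilenberg--MacLane space.

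First I would take $X=K(\pi,1)$, realized as a CW-complex with a single $0$-cell as in Section~\ref{sec:equivchain}. Under the given epimorphism $\nu\colon \pi\surj \Z_p$, extension of scalars turns $\F_p\Z_p$ into a right $\F_p\pi$-module, denoted $(\F_p\Z_p)_{\nu}$, and we can form the associated equivariant spectral sequence $E^{\bullet}(X,(\F_p\Z_p)_{\nu})$ of Definition~\ref{def:ss}. By Lemma~\ref{lem:conv rez} (applied with $r=1$), the $I$-adic filtration on $C_{\bullet}(X,(\F_p\Z_p)_{\nu})$ satisfies $F^p=0$, so the spectral sequence collapses after finitely many steps and converges to $H_\ast(X,(\F_p\Z_p)_{\nu})$.

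Next I would identify the $E^1$-page and its $d^1$ differential. Formula~\eqref{eq:e1kzp} gives $E^1_{-s,s+q}=H_q(X,\F_p)=H_q(\pi,\F_p)$ for $0\le s\le p-1$ and $q\ge s$, and Proposition~\ref{prop:d1zp} identifies the differential with the map $\varrho_q=(\nu_{\ast}\otimes \id)\circ \nabla_{K(\pi,1)}$, which is exactly the differential claimed in the Corollary.

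It remains to identify the abutment $H_\ast(X,(\F_p\Z_p)_{\nu})$ with $H_\ast(K,\F_p)$. Let $Y\to X$ be the Galois $\Z_p$-cover classified by $\nu$. By Example~\ref{ex:covers}, we have $C_{\bullet}(Y,\F_p)=C_{\bullet}(X,(\F_p\Z_p)_{\nu})$, hence $H_\ast(Y,\F_p)=H_\ast(X,(\F_p\Z_p)_{\nu})$. Since $X$ is aspherical and covers induce isomorphisms on higher homotopy groups, $Y$ is aspherical as well; by covering space theory, $\pi_1(Y)=\ker(\nu)=K$, so $Y=K(K,1)$ and $H_\ast(Y,\F_p)=H_\ast(K,\F_p)$. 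Combining these identifications yields the abutment in the Corollary.

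There is no real obstacle here: every ingredient has been set up. The only mildly delicate point to make explicit is the identification of the abutment as the homology of $K$, which rests on the fact that asphericity passes to covers; once this is noted, the statement is just a specialization of Proposition~\ref{prop:d1zp} together with the finiteness statement from Lemma~\ref{lem:conv rez}.
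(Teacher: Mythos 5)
Your proposal is correct and follows exactly the route the paper intends: the corollary is stated as an immediate specialization of Lemma \ref{lem:conv rez}, formula \eqref{eq:e1kzp}, and Proposition \ref{prop:d1zp} to $X=K(\pi,1)$, with the abutment identified via Example \ref{ex:covers} and the fact that the $\Z_p$-cover of an aspherical space is a $K(K,1)$. Your explicit remark about asphericity passing to covers is the only step the paper leaves tacit, and it is handled correctly.
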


\subsection{Monodromy action}
\label{subsec:rez mono}

In view of Proposition \ref{prop:d1zp}, the first page of 
the spectral sequence  $E^{\bullet}(X,(\F_p\Z_p)_{\nu})$ looks like 
\[
\xymatrixrowsep{8pt}
\xymatrix{
\bullet  &  \bullet \ar_{\varrho_q}[l] & \bullet 
\ar_{\varrho_{q+1}}[l]  & \bullet  \ar[l]  & \bullet  \ar[l] \\
  &  \bullet &  \bullet \ar_{\varrho_q}[l] & \bullet 
\ar_{\varrho_{q+1}}[l]  & \bullet  \ar[l]  \\
  & &   \bullet&  \bullet \ar_{\varrho_q}[l] & \bullet 
\ar_{\varrho_{q+1}}[l]  \\
  & & & \bullet&  \bullet \ar_{\varrho_q}[l] 
}
\]
with columns running right-to-left from $E^1_{0,*}$ 
to $E^1_{-p+1,*}$. Thus, if $p\ne 2$, we have 
a chain complex,
\[
\xymatrix{ \cdots \ar[r]
&   H_{q+1}(X,\F_p) \ar^(.52){\varrho_{q+1}}[r] 
&  H_{q}(X,\F_p) \ar^(.45){\varrho_{q}}[r] 
&  H_{q-1}(X,\F_p) \ar[r] & \cdots}. 
\]

\begin{prop}
\label{prop:jordan block}
If the chain complex $(H_*(X,\F_p), \varrho_*)$ is acyclic 
in degree $*=q$, then $J^2\cdot H_q(X,(\F_p \Z_p)_{\nu})=0$. 
\end{prop}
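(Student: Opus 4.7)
The plan is to run the spectral sequence $E^\bullet(X,(\F_p\Z_p)_\nu)$ out to its $E^2$-page, use the acyclicity hypothesis to collapse most of the diagonal converging to $H_q$, and then combine the result with the known comparisons between the $J$-adic and the spectral-sequence filtrations.

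First I would identify $E^2_{-s,s+q}$ for $1\le s\le p-2$. By Proposition \ref{prop:d1zp}, the $d^1$-differentials along row $t$ of the $E^1$-page form the complex
\[
H_t(X,\F_p)\xrightarrow{\varrho_t}H_{t-1}(X,\F_p)\xrightarrow{\varrho_{t-1}}\cdots\xrightarrow{\varrho_{t-p+2}}H_{t-p+1}(X,\F_p),
\]
truncated on the left because $\gr_J^s(\F_p\Z_p)=0$ for $s\ge p$. At position $(-s,s+q)$ with $1\le s\le p-2$, both the incoming differential $\varrho_{q+1}$ and the outgoing differential $\varrho_q$ are present, so
\[
E^2_{-s,s+q}=\ker(\varrho_q)/\im(\varrho_{q+1})=0
\]
by the hypothesis. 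Since $E^\infty$ is a subquotient of $E^2$, the same vanishing holds on $E^\infty$.

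Next I would translate this into information about the filtration $F^\bullet$ on $H_q:=H_q(X,(\F_p\Z_p)_\nu)$. Lemma \ref{lem:conv rez} gives $F^pC_\bullet(X,(\F_p\Z_p)_\nu)=0$, so the filtration is finite and the spectral sequence converges strongly, with $E^\infty_{-s,s+q}\cong F^sH_q/F^{s+1}H_q$. The previous step forces $F^sH_q=F^{s+1}H_q$ for $1\le s\le p-2$, and combined with $F^pH_q=0$ this yields the telescoping
\[
F^1H_q=F^2H_q=\cdots=F^{p-1}H_q.
\]
Now Corollary \ref{cor:two filtrations} gives $J\cdot H_q\subseteq F^1H_q$, while Lemma \ref{lem:if} (with $k=1$, $n=p-1$) gives $J\cdot F^{p-1}H_q\subseteq F^pH_q=0$. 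Combining,
\[
J^2\cdot H_q=J\cdot(J\cdot H_q)\subseteq J\cdot F^{p-1}H_q=0,
\]
as required.

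The main subtlety, and the reason the conclusion is $J^2$ rather than $J$, is the boundary position $s=p-1$: there the outgoing $d^1$ is absent, so $E^2_{-(p-1),p-1+q}=H_q/\im(\varrho_{q+1})$ need not vanish and $\gr_F^{p-1}H_q$ can be nonzero. That top filtration layer is nonetheless automatically annihilated by $J$, since $J$ pushes it into $F^pH_q=0$, and it is precisely the telescoping of the middle layers up to that top one which upgrades the tautological inclusion $J\cdot H_q\subseteq F^1H_q$ to the desired vanishing $J^2\cdot H_q=0$.
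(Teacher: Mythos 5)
Your proof is correct and follows essentially the same route as the paper's: the hypothesis kills $E^2_{-s,s+q}$ for $0<s<p-1$, hence $F^1H_q=\cdots=F^{p-1}H_q$, and then the inclusion $J^k\cdot F^n\subseteq F^{k+n}$ together with $F^p=0$ gives $J^2\cdot H_q=0$. Your closing remark about the boundary layer $s=p-1$ correctly pinpoints why one only gets $J^2$ and not $J$.
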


\begin{proof}
Let $F^n = F^n H_q(X,(\F_p\Z_p)_{\nu})$ be the spectral 
sequence filtration.  Our hypothesis
forces  $E^2_{-s,q+s}=0$, for $0<s<p-1$. Hence,
$E^{\infty}_{-s,q+s}=0$, for $0<s<p-1$, which implies 
$F^1 = \cdots = F^{p-1}$. 

Now recall from Lemma \ref{lem:if} that 
$J^k\cdot F^n  \subseteq F^{k+n} $.  Hence:
\[
J^2\cdot H_q(X,(\F_p \Z_p)_{\nu}) = J^2 F^0 \subseteq 
J F^1 = J F^{p-1}   \subseteq F^p =0, 
\]
and this finishes the proof.
\end{proof}

\section{Completion and homology of abelian covers}
\label{sec:completions}

In this section, we study the more general situation when the 
coefficient module is the group-ring of an abelian group $A$, 
with emphasis on the good convergence properties
of the corresponding equivariant spectral sequence. 
In the process, we use standard completion tools from 
commutative algebra, as described in \cite{Mat}  and \cite{Ser}.  

\subsection{Convergence}
\label{subsec:jadic}
Let $\k$ be a field. 
Suppose we are given an epimorphism $\nu\colon \pi\surj A$ onto an 
abelian group.  We may then view the ring $\Lambda:=\k{A}$ as a right 
$\k\pi$-module, via the ring map $\bar{\nu}\colon \k\pi\surj \k{A}$.   
We shall denote this module by $\Lambda_{\nu}$, and will 
consider the spectral sequence $E^{\bullet}(X,\Lambda_{\nu})$, 
associated to the Galois cover of the CW-complex $X$ 
corresponding to $\nu$.  

As usual, write $I=I_{\k}\pi$ and $J=\bar{\nu}(I) \Lambda$, 
and recall that $J$ is the augmentation ideal of $\Lambda=\k{A}$. 
Also, note that $J$ is a maximal ideal of the Noetherian ring $\Lambda$.  
Let $\iota \colon \Lambda \to \widehat{\Lambda}$ be the $J$-adic 
completion.  Set $\widehat{J}:= \iota (J) \widehat{\Lambda}$, and 
note that $\widehat{\Lambda}$ is a Noetherian local ring, with 
maximal ideal $\widehat{J}$, hence a Zariski ring. The ring morphism 
$\hat{\nu}:= \iota \circ \bar{\nu}$ defines another $\k\pi$-module,
which we will denote by $\widehat{\Lambda}_{\hat{\nu}}$. We
thus obtain a morphism of spectral sequences,
\[
\xymatrixcolsep{32pt}
\xymatrix{E^{\bullet} (X, \Lambda_{\nu}) 
\ar^{E^{\bullet}(\iota)}[r]& E^{\bullet} (X, \widehat{\Lambda}_{\hat{\nu}})
}.
\]

We will frequently make use of the fact that, for any finitely generated 
$\Lambda$-module $M$, there is a natural $\widehat{\Lambda}$-isomorphism,
\begin{equation}
\label{eq:jcompl}
\xymatrix{
M \otimes_{\Lambda} \widehat{\Lambda} \ar^(.6){\cong}[r]& \widehat{M}},
\end{equation}
and the completion filtration on the $J$-adic completion $\widehat{M}$ 
coincides with the $\widehat{J}$-adic filtration.

The good convergence properties of both spectral sequences follow 
easily from the Artin-Rees Lemma, as explained by Serre in \cite{Ser}.

\begin{prop}
\label{prop:ss kzhat}
Let $X$ be a connected CW-complex of finite type.
With notation as above, we have:
\begin{enumerate}
\item 
$E^1_{s,t} (X, \Lambda_{\nu})\Rightarrow H_{s+t}(X, \Lambda_{\nu})$
and $E^1_{s,t} (X, \widehat{\Lambda}_{\hat{\nu}})\Rightarrow 
H_{s+t}(X, \widehat{\Lambda}_{\hat{\nu}})$.
\item 
The filtration $F_{\bullet}H_{*}(X, \widehat{\Lambda}_{\hat{\nu}})$
is separated.
\item 
If $X$ is a finite complex, the spectral sequence 
$E^{\bullet} (X, \Lambda_{\nu})$ collapses in finitely many steps.
\end{enumerate}
\end{prop}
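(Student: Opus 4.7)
The plan is to invoke the machinery of the Artin--Rees lemma, following the standard Serre-style approach to convergence of $J$-adic spectral sequences of filtered chain complexes of finitely generated modules over a Noetherian ring. Since $X$ is of finite type, $\pi_1(X)$ is finitely generated, hence so is its quotient $A$; thus $\Lambda=\k{A}$ is a finitely generated $\k$-algebra, so Noetherian, as is its $J$-adic completion $\widehat{\Lambda}$. Each chain module $C_q(X,\Lambda_{\nu})=\Lambda\otimes_{\k}C_q(X,\k)$ is a finitely generated free $\Lambda$-module, and the modules of cycles $Z_q\subseteq C_q$ and boundaries $B_q\subseteq C_q$ are finitely generated $\Lambda$-submodules by Noetherianity. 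All of this transfers to $C_{\bullet}(X,\widehat{\Lambda}_{\hat{\nu}})=\widehat{\Lambda}\otimes_{\Lambda}C_{\bullet}(X,\Lambda_{\nu})$ by flatness of the completion map.

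For part (1), I will verify the weak convergence condition \eqref{eq:weak conv} in each filtration degree $-n$ and total degree $q$. By the Artin--Rees lemma applied to $B_{q-1}\subseteq C_{q-1}$, there is a constant $c=c_q$ with $B_{q-1}\cap J^{m}C_{q-1}\subseteq J^{m-c}B_{q-1}$ for all $m\ge c$. Given $z\in\bigcap_{r}(Z^r_{-n}+F^{n+1}C_q)$, write $z=y_r+w_r$ with $w_r\in J^{n+1}C_q$ and $\tilde{\partial}y_r\in J^{n+r}C_{q-1}\cap B_{q-1}$; Artin--Rees then gives $\tilde{\partial}y_r=\tilde{\partial}y_r'$ for some $y_r'\in J^{n+r-c}C_q\subseteq J^{n+1}C_q$ as soon as $r\ge c+1$, so $y_r-y_r'\in Z^{\infty}_{-n}$ and $z\in Z^{\infty}_{-n}+F^{n+1}C_q$. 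The same reasoning, with $\widehat{J}$ in place of $J$, handles $E^{\bullet}(X,\widehat{\Lambda}_{\hat{\nu}})$.

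For part (2), the key fact is that $\widehat{\Lambda}$ is a Noetherian local ring whose maximal ideal $\widehat{J}$ lies in its Jacobson radical. The homology $H_{*}(X,\widehat{\Lambda}_{\hat{\nu}})$ is finitely generated over $\widehat{\Lambda}$ (a subquotient of a finitely generated free module), so Krull's intersection theorem gives $\bigcap_n\widehat{J}^n H_{*}(X,\widehat{\Lambda}_{\hat{\nu}})=0$. To upgrade this to separation of the a priori coarser filtration $F^{\bullet}$, suppose $z\in Z^{\infty}$ belongs to $J^nC+B$ for every $n$, and write $z=z_n+b_n$ with $z_n\in J^nC$ and $b_n\in B$. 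The differences $b_n-b_m$ lie in $B\cap J^{\min(n,m)}C\subseteq J^{\min(n,m)-c}B$ by Artin--Rees, so $\{b_n\}$ is Cauchy in the $\widehat{J}$-adic topology on $B$; completeness of $\widehat{\Lambda}$ then gives $b_n\to b\in B$, and $z-b\in\bigcap_n\widehat{J}^n C=0$, whence $z\in B$.

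Finally, for part (3), when $X$ is a finite complex the chain complex has only finitely many nonzero degrees, so the Artin--Rees constants $c_q$ can be bounded by a single integer $c$ valid in every degree. A direct inspection of the formula \eqref{eq:ers} then shows that for $r>c$, the Artin--Rees inclusion forces every $d^r$-image to already lie in the denominator, so $d^r=0$ and the spectral sequence has collapsed by page $c+1$. The main obstacle will be the careful bookkeeping in part (1): correcting $z$ by boundaries while tracking the Artin--Rees shift $c$ needs to be done precisely, but once this standard piece of Serre-style machinery is in place, parts (2) and (3) follow as straightforward applications of the same Artin--Rees / Krull intersection toolkit.
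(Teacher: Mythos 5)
Your proof is correct and takes essentially the same route as the paper, whose entire proof is the citation ``See \cite[pp.~22--24]{Ser}'': you have simply written out the Artin--Rees/Krull-intersection argument that citation points to, including the key subtlety in part (2) that separation of the coarser filtration $F^{\bullet}$ requires the completeness of the (closed) submodule of boundaries rather than Krull's theorem alone. The only cosmetic blemishes are a few occurrences of $J$ that should read $\widehat{J}$ in part (2), and the unstated (but already established in Proposition~\ref{prop:conv ss}) existence of the $E^{\infty}$ term needed to speak of convergence.
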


\begin{proof}
See \cite[pp.~22--24]{Ser}.
\end{proof}

The next lemma shows that  $E^{\bullet} (X, \Lambda_{\nu})$ 
computes $H_{*}(X, \Lambda_{\nu}) \otimes_{\Lambda} 
\widehat{\Lambda}$, the $J$-adic completion of 
$H_{*}(X, \Lambda_{\nu})$. 

\begin{lemma}
\label{lem:eiota}
Let $X$ be a connected CW-complex of finite type.
Then the maps 
\[
E^r(\iota)\colon E^r(X, \Lambda_{\nu}) \to
E^r(X, \widehat{\Lambda}_{\hat{\nu}})\quad  {\it and} \quad
\gr_F (\iota)\colon \gr_F H_{q}(X, \Lambda_{\nu}) 
\to \gr_F  H_{q}(X, \widehat{\Lambda}_{\hat{\nu}})
\]
are isomorphisms, for all $1\le r \le \infty$ and $q\ge 0$. 
\end{lemma}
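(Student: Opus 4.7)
The plan is to establish the statement in three stages: first at $r=1$, then inductively for all finite $r$, and finally at $r=\infty$ together with the $\gr_F$ assertion, using the convergence statements from Proposition \ref{prop:ss kzhat}.

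For the $E^1$-page, I would start from the description in \eqref{eq:e1stxr}. Since $X$ is of finite type and $\k$ is a field, this specializes to
\[
E^1_{-s,t}(X, \Lambda_{\nu}) = \gr_J^s(\Lambda) \otimes_{\k} H_{t-s}(X,\k),
\qquad
E^1_{-s,t}(X, \widehat{\Lambda}_{\hat{\nu}}) = \gr_{\widehat{J}}^s(\widehat{\Lambda}) \otimes_{\k} H_{t-s}(X,\k),
\]
and $E^1(\iota)$ is induced by the natural map $\gr_J(\Lambda)\to \gr_{\widehat{J}}(\widehat{\Lambda})$ tensored with the identity on $H_{*}(X,\k)$. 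So the heart of the step at $r=1$ is the classical fact that $J$-adic completion of a Noetherian ring induces an isomorphism on the associated graded: applying \eqref{eq:jcompl} to the finitely generated $\Lambda$-modules $J^n$ and using $\widehat{J^n}=\widehat{J}^n$ yields $J^n/J^{n+1}\isom \widehat{J}^n/\widehat{J}^{n+1}$ for every $n$.

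For $r\ge 2$, I would propagate by induction: a morphism of spectral sequences that is an isomorphism on one page is an isomorphism on all subsequent pages, since $E^{r+1}=H(E^r,d^r)$ and homology is functorial. At $r=\infty$, Proposition \ref{prop:ss kzhat}(iii) (combined with the existence statement from Remark \ref{rem:einf}) ensures that each bidegree in either spectral sequence stabilizes at some finite page, so $E^{\infty}(\iota)$ is the direct limit of isomorphisms and hence an isomorphism.

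Finally, for $\gr_F(\iota)$, the convergence assertion in Proposition \ref{prop:ss kzhat}(i) provides natural identifications $E^{\infty}_{-s,s+q}\cong \gr_F^{-s}H_q$ for both coefficient systems, compatible with the morphism induced by $\iota$; combining this with the isomorphism $E^{\infty}(\iota)$ just established yields the second claim. The main technical obstacle is really just the Noetherian input giving $\gr_J(\Lambda)\isom \gr_{\widehat{J}}(\widehat{\Lambda})$ and the convergence statements of Proposition \ref{prop:ss kzhat}; once these are granted, the comparison morphism argument is standard and essentially formal.
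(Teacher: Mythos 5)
Your proof is correct and follows essentially the same route as the paper: the paper simply starts one page earlier, observing via \eqref{eq:jcompl} that the completion map already induces an isomorphism on $E^0=\gr_F C_{\bullet}$, and then propagates through finite pages and invokes convergence exactly as you do. One tiny citation slip: for stabilization of each bidegree you need only the existence of $E^{\infty}$ from Remark \ref{rem:einf} (finite-dimensionality of each $E^1_{s,t}$), not Proposition \ref{prop:ss kzhat}(iii), which assumes $X$ is a finite complex.
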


\begin{proof}
Set $C_{\bullet}:= C_{\bullet}(X, \Lambda)$ and 
$\widehat{C}_{\bullet}:= C_{\bullet}(X, \widehat{\Lambda})$. 
Note that, by construction, $\widehat{C}_{\bullet}$ is the 
$J$-adic completion of the $\Lambda$-chain complex 
$C_{\bullet}$. By \eqref{eq:jfilt}, 
\[
F^k C_{\bullet}(X, \Lambda)= J^k \cdot C_{\bullet} 
\quad\text{and}\quad 
F^k C_{\bullet}(X, \widehat{\Lambda})= 
\widehat{J}^k \cdot \widehat{C}_{\bullet},
\]
for all $k$. Since $\iota \colon C_{\bullet} \to \widehat{C}_{\bullet}$ 
is the completion map, it follows from \eqref{eq:jcompl} that 
$E^0(\iota)$ is an isomorphism.  Therefore, $E^r(\iota)$ is 
an isomorphism, for  $1\le r <\infty$.  The remaining assertions 
follow from convergence.
\end{proof}

\subsection{Cyclic quotients}
\label{subsec:cycli quotient}
 
We now specialize further, to the case when $A=\Z$. 
Let $\nu\colon \pi\surj \Z$ be an epimorphism, and $\k$ a commutative 
ring. Identify the group-ring $\k{A}$ with the Laurent polynomial 
ring $\Lambda=\k[t^{\pm 1}]$, and note that the augmentation 
ideal of $\Lambda$ is principal: $J= (t-1)\Lambda$.  
Identify also the associated graded ring  $\gr(\k\Z)$ 
with the polynomial ring $\k[x]$, where 
$x=t-1$. Assuming either $\k=\Z$ and $H_*(X, \Z)$ is torsion-free, or
$\k$ is a field, it follows from \eqref{eq:e1 new} and 
Lemma \ref{lem:irj} that
\begin{equation}
\label{eq:e1kz}
E^1_{-s,t}(X,\Lambda_{\nu}) = \gr^s_J(\k\Z) \otimes_{\k} H_{t-s}(X,\k)= 
\k \otimes_{\k} H_{t-s}(X,\k) = H_{t-s}(X,\k), 
\end{equation}
where we used that $\gr^s_J(\k\Z)$ is freely generated by $x^s$. 

The induced homomorphism $\nu_{*}\colon H_1(\pi,\k)\to H_1(\Z,\k)=\k$  
determines a cohomology class $\nu_{\k}\in H^1(X,\k)$.
Proceeding as in the proof of Proposition \ref{prop:d1zp}, 
we obtain the following. 

\begin{corollary}
\label{cor:d1z}
Let $X$ be a connected CW-complex and let $\nu\colon \pi_1(X)\surj \Z$ 
be an epimorphism.  Assume either that $\k=\Z$ and $H_*(X, \Z)$ 
is torsion-free, or $\k$ is a field.
With identifications as in \eqref{eq:e1kz}, 
the differential $d^1\colon E^1_{-s,s+q}  \to E^1_{-s-1,s+q}$
is the composite 
\[
\xymatrixcolsep{30pt}
\xymatrix{
H_q(X,\k) \ar[r]^(.35){\nabla_X} & H_1(X,\k) \otimes_{\k} 
H_{q-1}(X,\k) \ar[r]^(.45){\nu_* \otimes \id} 
& \k \otimes_{\k} H_{q-1}(X,\k) = H_{q-1}(X,\k)
}\!.
\]
If, moreover, $X$ is of finite type, then the dual 
$\delta^1=(d^1)^{*}$ coincides with the left-multiplication map 
\[
\cdot \nu_{\k} \colon  H^{q-1}(X,\k) \to H^{q}(X,\k).
\]
\end{corollary}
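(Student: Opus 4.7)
The plan is to derive this corollary directly from the general machinery already established, specializing Proposition~\ref{prop:d1 kg} and Corollary~\ref{cor:d1 transp} to the case $G=\Z$. Since $\k\Z$ is commutative, the hypotheses of both results are satisfied. First I would fix the identification of $\gr_J(\k\Z)$ with the polynomial ring $\k[x]$ via $x\mapsto t-1$, so that each graded piece $\gr^s_J(\k\Z)$ is a free $\k$-module of rank one with basis $x^s$. This gives the identification \eqref{eq:e1kz}, namely $E^1_{-s,s+q}(X,\Lambda_{\nu})=\gr^s_J(\k\Z)\otimes_{\k}H_q(X,\k)\cong H_q(X,\k)$, via $x^s\otimes h\leftrightarrow h$.

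Next I would invoke Lemma~\ref{lem:dr lin} to assert that $d^1$ is $\gr_J(\k\Z)$-linear, so it is determined by its restriction to $1\otimes H_q(X,\k)$. Proposition~\ref{prop:d1 kg} applied to $G=\Z$ identifies this restriction with the composite $(\nu_*\otimes \id)\circ\nabla_X$, taking values in $H_1(\Z,\k)\otimes_{\k}H_{q-1}(X,\k)$. Under the canonical isomorphism $H_1(\Z,\k)\cong \gr^1_J(\k\Z)$ from \eqref{eq:gr1} (sending $1$ to $x$), this reads $d^1(1\otimes h)=x\otimes \alpha$, where $\alpha=(\nu_*\otimes \id)(\nabla_X h)\in \k\otimes_{\k} H_{q-1}(X,\k)=H_{q-1}(X,\k)$. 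Multiplying on the left by $x^s\in \gr^s_J(\k\Z)$ then yields $d^1(x^s\otimes h)=x^{s+1}\otimes \alpha$, and the identification $x^{s+1}\otimes \alpha\leftrightarrow \alpha$ inside $E^1_{-s-1,s+q}\cong H_{q-1}(X,\k)$ gives precisely the claimed formula.

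For the finite-type statement, I would apply Corollary~\ref{cor:d1 transp} with $G=\Z$. The hypothesis that $H_1(\Z,\k)$ is free over $\k$ holds trivially, and the induced map $\nu^*\colon H^1(\Z,\k)=\k\to H^1(X,\k)$ sends the generator $1$ to $\nu_{\k}$ essentially by definition of the latter class. The triangle in that corollary then collapses to $\delta^1_{\Z}=\cup_X\circ(\nu_{\k}\otimes -)$, which is precisely left-multiplication by $\nu_{\k}$ on $H^{q-1}(X,\k)$. The only real obstacle is bookkeeping: one must carefully track the shift $s\mapsto s+1$ induced by the action of $x$ in concert with the identifications $\gr^s_J(\k\Z)\cong \k$, so as not to obscure the content of Proposition~\ref{prop:d1 kg} when reducing from $G$-valued to $\k$-valued formulas.
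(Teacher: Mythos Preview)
Your proposal is correct and follows essentially the same approach as the paper: the paper simply says ``proceeding as in the proof of Proposition~\ref{prop:d1zp}'', which amounts to specializing Proposition~\ref{prop:d1 kg} and Corollary~\ref{cor:d1 transp} to $G=\Z$ and tracking the identification $H_1(\Z,\k)\cong\gr^1_J(\k\Z)=\k\cdot x$ under~\eqref{eq:gr1}. Your write-up makes the bookkeeping with the $\gr_J(\k\Z)$-linearity and the shift by $x^s$ more explicit than the paper does, but the substance is identical.
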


\subsection{Comparing the two filtrations}
\label{subsec:comp filt again}

Next, identify the $J$-adic completion of $\Lambda=\k\Z$  
with the power-series ring $\widehat{\Lambda}= \k [[x]]$, 
so that the map $\iota \colon \Lambda 
\to \widehat{\Lambda}$ sends  $t$ to $x+1$.  
Note that the extension of the ideal $J$ 
is principal: $\widehat{J}= x \widehat{\Lambda}$.

Let $\nu\colon \pi \surj \Z$ be an epimorphism.
We have seen in \S\ref{subsec:exr} that each piece 
of the corresponding spectral sequences, ${}_qE^r_*$ 
and ${}_q\widehat{E}^r_*$, supports a natural 
graded module structure over the ring 
$\gr_J (\Lambda)= \gr_{\widehat{J}} (\widehat{\Lambda})$.  
Recall from Corollary \ref{cor:two filtrations} that, for any commutative 
ring $R$, viewed as a right $\k \pi$-module via a ring map 
$\rho \colon \k \pi \to R$, we have an inclusion 
$J^s \cdot H_q(X, R) \subseteq F_{-s} H_q(X, R)$, where $J$ 
is the ideal of $R$ generated by $\rho (I)$.

\begin{lemma}
\label{lem:equal filt}
Let $X$ be a connected CW-complex and $\k$ a commutative ring.
Set $R=\k\Z_\nu$ or $\widehat{\k\Z}_{\hat{\nu}}$.  Then 
\begin{enumerate}
\item \label{l1}
For each $r$ and $q$, the $\gr(R)$-module ${}_qE^r_*$ is 
generated by ${}_qE^r_0$.
\item \label{l2}
The spectral sequence filtration on $H_*(X, R)$ coincides with the
$(x)$-adic filtration.
\end{enumerate}
\end{lemma}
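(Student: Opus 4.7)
The plan is to exploit the fact that, in both cases at hand, the augmentation ideal (either $J$ or $\widehat{J}$) is the \emph{principal} ideal generated by $x := t-1$, and the chain complex $C := C_\bullet(X,R)$ is a free $R$-module, hence $R$-torsion-free. In particular, multiplication by $x$ is an injective endomorphism of $C$, and by \eqref{eq:jfilt} one has $F^s C = x^s \cdot C$ for all $s \ge 0$, so the map $x^s \colon C \to F^s C$ is an isomorphism of $\k$-chain complexes.

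For part \eqref{l2}, I would factor the inclusion $\iota_s \colon F^s C \hookrightarrow C$ through this isomorphism: the composite $\iota_s \circ x^s \colon C \to C$ is simply multiplication by $x^s$. Passing to $H_q$, the map $(\iota_s)_* \colon H_q(F^s C) \to H_q(X,R)$ is identified with multiplication by $x^s$ on $H_q(X, R)$. By the definition \eqref{eq:filthom} of the spectral sequence filtration,
\[
F^s H_q(X, R) \;=\; \im (\iota_s)_* \;=\; x^s \cdot H_q(X, R),
\]
which is precisely the $(x)$-adic filtration.

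For part \eqref{l1}, the same injectivity of $x$ would give
\[
Z^r_{-s} \;=\; x^s \cdot Z^r_0, \qquad s \ge 0;
\]
indeed, for $c \in C$, the condition $\tilde\partial(x^s c) = x^s \tilde\partial c \in F^{r+s} C = x^{r+s} C$ is equivalent to $\tilde\partial c \in x^r C$ by injectivity of $x$. An analogous check shows that multiplication by $x$ carries the denominator $Z^{r-1}_{-(s+1)} + \tilde\partial Z^{r-1}_{-(s+1-r)}$ appearing in \eqref{eq:ers} into the corresponding denominator for $E^r_{-s-1}$, so $x$ descends to a well-defined $\k$-linear map $x \colon E^r_{-s} \to E^r_{-s-1}$. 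Since $Z^r_{-s-1} = x \cdot Z^r_{-s}$, this descended map is surjective. Iterating yields ${}_qE^r_s = x^s \cdot {}_qE^r_0$ for every $s \ge 0$, which is exactly the stated generation of ${}_qE^r_*$ by ${}_qE^r_0$ as a module over $\gr(R) = \k[x]$.

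The only mildly delicate step will be the denominator verification in part \eqref{l1}, where the convention $F^k = C$ for $k \le 0$ must be handled correctly in the edge case $s+1-r < 0$; a short case analysis takes care of this. Otherwise both parts are formal consequences of $x$ being a non-zero-divisor on $C$. I note in passing that the descended map $x \colon E^r_{-s} \to E^r_{-s-1}$ is generally \emph{not} injective (cycles in $Z^r_{-s}$ can become boundaries only after being multiplied by $x$), so ${}_qE^r$ need not be free over $\k[x]$; but only generation in degree zero is claimed.
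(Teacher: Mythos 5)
Your proof is correct and follows essentially the same route as the paper's: both parts reduce to the identities $Z^r_{-s}=x^s\cdot Z^r_0$ and $Z^{\infty}_{-s}=x^s\cdot Z^{\infty}_0$, which follow from $F^sC=x^s\cdot C$ together with the fact that $x=t-1$ is a non-zero-divisor on the free $R$-module $C_{\bullet}(X,R)$. Your extra check that multiplication by $x$ descends to a well-defined map on $E^r$ is already supplied by the $\gr_J(R)$-module structure from Lemma~\ref{lem:dr lin}, so it is harmless but redundant.
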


\begin{proof}
Part~\eqref{l1}. It is enough to check that $Z^r_{-s}= x^s\cdot Z^r_0$,
for all $r,s\ge 0$. Recall from \eqref{eq:jfilt} that 
$F^k C_{\bullet}(X, R)= x^k \cdot C_{\bullet}(X, R)$. If $z=x^s y$ and
$\tilde{\partial}(y)= x^r y'$, then $z\in F^s$ and 
$\tilde{\partial}(z)= x^{s+r} y'\in F^{s+r}$, hence $z\in Z^r_{-s}$.
Conversely, if $z=x^s y$ and 
$\tilde{\partial}(z)= x^s \tilde{\partial}(y)= x^{r+s} y'$, 
then $\tilde{\partial}(y)= x^{r} y'$, and therefore $y\in Z^r_0$, 
as needed.

Part~\eqref{l2} follows from the equality $Z^{\infty}_{-s}= 
x^s\cdot Z^{\infty}_0$, which is checked in a similar way. 
\end{proof}

Nevertheless, the filtration $F_{\bullet}(X,\k\Z)$ need 
not be separated, as the next example shows. 

\begin{example}
\label{ex:p3}
Let $X=S^1\times (S^1 \vee S^1)$, and identify  $\pi=\pi_1(X)$ 
with $\Z\times F_2=\langle a,b,c \mid a \text{ central}\rangle$.   
The epimorphism $\nu\colon \pi \surj \Z$ that sends  
$a\mapsto 2$, $b\mapsto 1$, $c\mapsto 1$ defines 
a Galois $\Z$-cover, $Y\to X$.   Identify the group ring $\k\Z$ 
with $\Lambda=\k[t^{\pm 1}]$, and consider the $\Lambda$-module 
$N:=H_1(Y,\k)=H_1(X, \k{\Z}_{\nu})$. 
Then
\[
N=\begin{cases}
\big( \Lambda/(1-t) \big)^2 \oplus \Lambda/(1+t) & \text{if $\ch\k\ne 2$},
\\[3pt]
\Lambda/(1-t)\oplus \Lambda/(1-t)^2 & \text{if $\ch\k=2$}.
\end{cases}
\]
Thus, 
\[
\bigcap_{s\ge 0} J^s N = 
\begin{cases}
\Lambda/(1+t) & \text{if $\ch\k\ne 2$}, \\
0 & \text{if $\ch\k=2$}.
\end{cases}
\]
where $J$ denotes the augmentation ideal of $\Lambda$.  
By Lemma \ref{lem:equal filt}, the filtration $\{F_{s} N\}$ is 
not separated, if $\ch\k\ne 2$.  In this situation, the spectral 
sequence $E^{\bullet} (X,\k\Z)$ converges, yet we cannot 
recover $N=H_1(X,\k\Z)$ from the spectral sequence, 
even additively. 
\end{example}

\section{Monodromy action and the Aomoto complex}
\label{sec:mono}

In the previous section, we showed that the equivariant 
spectral sequence of a finite type CW-complex $X$, 
starting at $E^{1}_{s,t} (X,\k\Z_{\nu})$, 
converges to $H_{s+t}(X,\k\Z_{\nu})$, the homology groups 
of the cover determined by an epimorphism 
$\nu\colon \pi_1(X)\surj \Z$, with coefficients in a field $\k$. 
Using this fact,  we now investigate the monodromy action 
of $\Z$ on $H_{*}(X,\k\Z_{\nu})$. In the process, we relate 
the triviality of the action to the exactness of the Aomoto 
complex defined by $\nu_{\k}\in H^1(X,\k)$. 

\subsection{Modules over $\k\Z$}
\label{subsec:kz mod}
Start by identifying the group-ring $\k\Z$ with 
$\Lambda=\k[t^{\pm 1}]$. Since $\k$ is a field, 
the ring $\Lambda$  is a PID, and so every 
finitely-generated $\Lambda$-module $P$ decomposes 
as a direct sum of a free module, $P_0=\Lambda^{\rank P}$, 
with finitely many modules of the form 
\begin{equation}
\label{eq:primary}
P_f=\bigoplus _{i> 0} (\Lambda/f^i)^{e_{f,i}(P)},
\end{equation}
indexed by irreducible polynomials $f \in \Lambda$. 
We call $P_f$ the $f$-primary part of $P$, and 
$\Lambda/f^i$ an $f$-primary Jordan block of size $i$. 

Now let $X$ be a finite-type CW complex, and 
$\nu\colon \pi_1(X)\surj \Z$ an epimorphism. For 
each $q\ge 0$, the homology group 
$P^q:=H_q(X,\k\Z_{\nu})$, viewed as a module 
over $\Lambda=\k\Z$, decomposes as
\begin{equation}
\label{eq:hdecomp}
H_q(X,\k\Z_{\nu}) = P^q_0 \oplus P^q_{t-1} \oplus 
\bigoplus_{f(1)\ne 0} P^q_f,
\end{equation}
with $P^q_0=\Lambda^{r_q}$, where $r_q$ is  
the rank of $P^q$, and $P^q_{t-1}= 
\bigoplus_{i>0} (\Lambda/(t-1)^i)^{e^q_i}$, 
where $e^q_i=e_{t-1,i}(P^q)$ is the number of 
$(t-1)$-primary Jordan blocks of size $i$. 

Our first result in this section identifies precisely the 
information that the $I$-adic spectral sequence carries 
about the $\k\Z$-module structure on  $H_*(X, \k \Z_{\nu})$, 
or, equivalently, the monodromy action of the group 
$\Z=\langle t\rangle$ on the homology groups 
of the covering space of $X$ determined by $\nu$.  
Set $J=(t-1)\k \Z$.

\begin{prop}
\label{prop:monospsq}
For each $q\ge 0$, the $\gr_J(\k \Z)$-module 
structure on ${}_qE^{\infty}(X, \k \Z_{\nu})$ described in 
\textup{\S\ref{subsec:exr}} determines the free and 
$(t-1)$-primary parts of $H_q(X, \k \Z_{\nu})$, viewed as 
a module over $\k \Z$. 
\end{prop}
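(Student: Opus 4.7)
The plan is to exploit the fact that, since $\k[t^{\pm 1}]$ is a PID and its $J$-adic completion $\widehat{\Lambda}=\k[[x]]$ (with $x=t-1$) is a DVR, the graded piece ${}_qE^{\infty}$ can be identified with the associated graded of a completion that retains only the free and $(t-1)$-primary parts.

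First, I would identify ${}_qE^{\infty}(X,\k\Z_{\nu})$ intrinsically in terms of $H_q(X,\k\Z_{\nu})$. By Proposition \ref{prop:ss kzhat}, the spectral sequence converges for $X$ of finite type, so ${}_qE^{\infty}=\gr_F H_q(X,\k\Z_{\nu})$ as a graded $\gr_J(\k\Z)=\k[x]$-module. By Lemma \ref{lem:equal filt}\eqref{l2}, the spectral sequence filtration $F_{\bullet}$ coincides with the $J$-adic filtration on $H_q(X,\k\Z_{\nu})$. Thus ${}_qE^{\infty}=\gr_J H_q(X,\k\Z_{\nu})$ as graded $\k[x]$-modules.

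Next, I would pass to the $J$-adic completion. Since $H_q(X,\k\Z_{\nu})$ is a finitely generated $\Lambda$-module, the standard identification \eqref{eq:jcompl} gives a $\widehat{\Lambda}$-module isomorphism $\widehat{H_q}\cong H_q\otimes_{\Lambda}\widehat{\Lambda}$, and the $J$-adic and $\widehat{J}$-adic filtrations coincide under this identification, so $\gr_J H_q\cong \gr_{\widehat{J}}\widehat{H_q}$ as graded $\k[x]$-modules. Now apply the primary decomposition \eqref{eq:hdecomp}: for any irreducible $f\in\Lambda$ with $f(1)\ne 0$, the polynomial $f$ becomes a unit in the local ring $\widehat{\Lambda}$ (its image in the residue field $\widehat{\Lambda}/\widehat{J}=\k$ is $f(1)\ne 0$), hence $\widehat{P^q_f}=0$. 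Therefore $\widehat{H_q}\cong \widehat{P^q_0}\oplus \widehat{P^q_{t-1}}\cong \widehat{\Lambda}^{r_q}\oplus\bigoplus_{i>0}(\widehat{\Lambda}/x^i)^{e^q_i}$.

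Finally, I would take the associated graded: $\gr_{\widehat{J}}\widehat{\Lambda}^{r_q}=\k[x]^{r_q}$ (free of rank $r_q$, with generators in degree zero), while $\gr_{\widehat{J}}(\widehat{\Lambda}/x^i)=\k[x]/(x^i)$, giving
\[
{}_qE^{\infty}(X,\k\Z_{\nu})\;\cong\;\k[x]^{r_q}\oplus\bigoplus_{i>0}\bigl(\k[x]/(x^i)\bigr)^{e^q_i}
\]
as a graded $\k[x]$-module. By the structure theorem for finitely generated modules over the PID $\k[x]$, the isomorphism class of this module determines $r_q$ (the torsion-free rank) and each multiplicity $e^q_i$ (the number of cyclic summands of length $i$ in the torsion submodule). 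This in turn recovers $P^q_0=\Lambda^{r_q}$ and $P^q_{t-1}=\bigoplus_{i>0}(\Lambda/(t-1)^i)^{e^q_i}$ as $\k\Z$-modules.

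The one point requiring care is the compatibility between the $\gr_J(\k\Z)$-action on ${}_qE^{\infty}$ defined in \S\ref{subsec:exr} and the $\gr_{\widehat{J}}(\widehat{\Lambda})$-action coming from the completion; this is precisely guaranteed by Lemma \ref{lem:equal filt}\eqref{l2}, so no real obstacle arises. Everything else is formal manipulation once convergence and the equality of filtrations are invoked.
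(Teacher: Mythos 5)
Your proof is correct and follows essentially the same route as the paper: convergence (Proposition \ref{prop:ss kzhat}) plus Lemma \ref{lem:equal filt} identify ${}_qE^{\infty}$ with $\gr_J H_q(X,\k\Z_{\nu})$, and the primary decomposition \eqref{eq:hdecomp} then yields $\k[x]^{r_q}\oplus\bigoplus_{i>0}(\k[x]/x^i)^{e^q_i}$, from which the structure theorem recovers $r_q$ and the $e^q_i$. Your detour through the completion $\widehat{\Lambda}=\k[[x]]$ is just a careful way of justifying why the $f$-primary parts with $f(1)\ne 0$ contribute nothing to $\gr_J$ (since $J$ acts invertibly on them), a step the paper leaves implicit.
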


\begin{proof}
Identify $\gr_J(\k \Z)= \k [x]$, where $x=t-1$. 
Proposition \ref{prop:ss kzhat}, together with 
Lemma \ref{lem:equal filt} and \eqref{eq:inftygr}, 
guarantee that the $\k [x]$-modules 
${}_qE^{\infty}$ and $\gr_J H_q(X, \k \Z_{\nu})$ 
are isomorphic. From \eqref{eq:hdecomp}, we 
obtain a decomposition 
\begin{equation}
\label{eq:kx}
\gr_J H_q(X, \k \Z_{\nu}) = \k [x]^{r_q} \oplus 
\bigoplus_{i>0} \big( \k [x]/x^i \big)^{e^q_i} ,
\end{equation}
as $\k [x]$-modules. Hence, the $\gr_J(\k \Z)$-module structure of 
${}_qE^{\infty}$ determines the numbers $r_q$ and $e^q_i$, 
that is, the free and $(t-1)$-primary parts of $H_q(X, \k \Z_{\nu})$.
\end{proof}

\subsection{Aomoto complex}
\label{subsec:aomoto}

As before, denote by $\nu_{\Z}$ the class in $H^1(X,\Z)$ 
determined by an arbitrary homomorphism $\nu\colon \pi\to \Z$.
Let $\nu_{*}\colon H_1(X,\k)\to H_1(\Z,\k)=\k$ 
be the induced homomorphism. The corresponding 
cohomology class,  $\nu_{\k}\in H^1(X,\k)$, is the image 
of $\nu_{\Z}$ under the coefficient homomorphism $\Z\to \k$. 
We then have:
\begin{equation}
\label{eq:nusquare}
\nu_{\k} \cup \nu_{\k} = 0 \text{ in } H^2(X,\k).
\end{equation}
Indeed, by obstruction theory, there is a map 
$f\colon X\to S^1$ and a class  $\omega\in H^1(S^1,\Z)$ 
such that $\nu_{\Z}=f^*(\omega)$. Hence, 
$\nu_{\Z}\cup\nu_{\Z}= f^*(\omega\cup\omega) =0$. 
Formula \eqref{eq:nusquare}  
then follows by naturality of cup products with respect to 
coefficient homomorphisms. 

As a consequence, left-multiplication by $\nu_{\k}$ turns the 
cohomology ring $H^*( X,  \k)$ into a cochain complex, 
\begin{equation}
\label{eq:aomoto}
(H^*( X,  \k) , \cdot\nu_{\k})\colon 
\xymatrix{
H^0(X,\k) \ar^{\nu_{\k}}[r] & 
H^1(X,\k) \ar^{\nu_{\k}}[r] & 
H^2(X,\k) \ar[r] & \cdots 
}
\end{equation}
which we call the  {\em Aomoto complex} of $H^*(X, \k)$, with 
respect to $\nu_{\k}$. 

\begin{definition}
\label{def:aomoto betti}
The {\em Aomoto Betti numbers} of $X$, with respect to 
the cohomology class $\nu_{\k}\in H^1(X,\k)$, are defined as 
\begin{equation}
\label{eq:aomoto betti}
\beta_q(X, \nu_{\k}) := 
\dim_{\k} H^q(H^*( X,  \k) , \cdot\nu_{\k}). 
\end{equation}
\end{definition}

Clearly, $\beta_q(X, \nu_{\k}) \le \dim_{\k} H^q( X, \k)$.  
In general though, the inequality is strict. 

\begin{example}
\label{ex:exterior}
Let $X=T^n$, the $n$-dimensional torus. The cohomology 
ring $H^*( X,  \k)$ is simply the exterior algebra on 
$H^1(X,\k)=\k^n$. If $\nu$ is onto, $\nu_{\k} \ne 0$, 
and so the complex \eqref{eq:aomoto} is exact. Thus,  
$\beta_q(X, \nu_{\k})=0$, for all $q\ge 0$, though, of 
course, $b_q(X)=\binom{n}{q}$. 
\end{example}

\subsection{Monodromy action}
\label{subsec:mono}
We are now ready to state and prove the second result 
of this section. Let $P^q=H_q(X,\k\Z_{\nu})$, viewed as a module 
over $\k\Z$ via the epimorphism $\nu\colon \pi\surj \Z$, with 
decomposition as in \eqref{eq:hdecomp}.

\begin{prop}
\label{prop:trivial action}
For each $k\ge 0$, the following are equivalent:
\begin{enumerate}
\item \label{m0}
For each $q\le k$, the vector space $H_q(X,\k\Z_{\nu})$ 
is finite-dimensional, and contains no $(t-1)$-primary 
Jordan blocks of size greater than $1$.
\item \label{m1}
For each $q\le k$, the monodromy action of $\k\Z$ on $P^q_0 \oplus 
P^{q}_{t-1}$ is trivial. 
\item \label{m2}
$\beta_0(X,\nu_{\k})=\cdots =\beta_k(X,\nu_{\k})=0$. 
\end{enumerate}
\end{prop}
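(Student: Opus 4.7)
The plan is to dispatch $(1)\iff(2)$ by elementary module theory over the PID $\Lambda=\k\Z$, and then to establish $(1)\iff(3)$ by running the equivariant spectral sequence through the $J$-adic completion $\widehat{\Lambda}=\k[[x]]$. For the first equivalence, both conditions translate, via the decomposition \eqref{eq:hdecomp}, into the same requirement: $r_q=0$ and $e^q_i=0$ for $i\ge 2$, for every $q\le k$. Since $X$ is of finite type, $P^q$ is a finitely generated $\Lambda$-module, so $\dim_{\k}P^q<\infty$ precisely when the free part $\Lambda^{r_q}$ vanishes; and on a Jordan block $\Lambda/(t-1)^i$ the monodromy $t=1+(t-1)$ acts trivially iff $i=1$.

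For $(1)\iff(3)$, flatness of $\widehat{\Lambda}$ over $\Lambda$ gives $H_q(X,\widehat{\Lambda}_{\hat{\nu}})=H_q(X,\Lambda_\nu)\otimes_\Lambda\widehat{\Lambda}$, and any irreducible $f\in\Lambda$ with $f(1)\ne 0$ becomes a unit in $\widehat{\Lambda}$; thus completion annihilates the summands $P^q_f$ with $f(1)\ne 0$, and converts (1) into the statement that $H_q(X,\widehat{\Lambda}_{\hat{\nu}})$ is $\k$-finite-dimensional and annihilated by $x$, for every $q\le k$. On the spectral-sequence side, Corollary~\ref{cor:d1z} identifies each fixed-$t$ row of $(E^1,d^1)$ with the truncated chain complex dual to the Aomoto cochain complex $(H^*(X,\k),\cdot\nu_{\k})$; hence $E^2_{-s,s+q}=\beta_q(X,\nu_{\k})^*$ at all interior bidegrees $s\ge 1$, $q\ge 1$, while $\beta_0=0$ is automatic because $\nu_{\k}\ne 0$ makes $\cdot\nu_{\k}\colon H^0\to H^1$ injective.

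The implication $(3)\Rightarrow(1)$ is then straightforward: assuming $\beta_q=0$ for $q\le k$, this identification gives $E^2_{-s,s+q}=0$, hence $E^\infty_{-s,s+q}=0$, at every bidegree with $s\ge 1$ and $q\le k$. By Lemma~\ref{lem:equal filt}, the spectral-sequence filtration on $H_q(X,\widehat{\Lambda}_{\hat{\nu}})$ coincides with the $(x)$-adic filtration, so $F^1 H_q=F^2 H_q=\cdots$; combined with the separateness asserted in Proposition~\ref{prop:ss kzhat}(2), this forces $xH_q(X,\widehat{\Lambda}_{\hat{\nu}})=F^1 H_q=0$ for $q\le k$, yielding (1).

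The converse $(1)\Rightarrow(3)$ is the main obstacle, since a priori $E^\infty$ is only a subquotient of $E^2$. I would argue by induction on $k$, the base case $\beta_0=0$ being automatic, and for the inductive step exploit the corner bidegree $(-1,k+1)$ (equivalently $(s,q)=(1,k)$). Any incoming $d^r$ with $r\ge 2$ would have to originate at $(1-r,k+1)$, which lies outside the second quadrant, so none exists; any outgoing $d^r$ lands at $(1+r,k-1)$, an interior position on the $q=k-1$ row (or a bottom-edge position in degree $0$ when $k=1$), where $E^2$ already vanishes by the inductive hypothesis. Hence $E^\infty_{-1,1+k}=E^2_{-1,1+k}=\beta_k^*$, and hypothesis $(1)_{q=k}$ forces $\beta_k=0$, completing the induction.
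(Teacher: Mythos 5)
Your proposal is correct and follows essentially the same route as the paper: the equivalence of (1) and (2) by elementary structure theory over the PID $\k[t^{\pm 1}]$, the identification $\dim_\k E^2_{-s,s+q}=\beta_q(X,\nu_\k)$ via Corollary~\ref{cor:d1z}, convergence plus Lemma~\ref{lem:equal filt} for the direction $(3)\Rightarrow(1)$, and an induction on $k$ that forces degeneration at the corner bidegree $(-1,k+1)$ for the converse. The only cosmetic difference is that you pass through the completion $\widehat{\Lambda}=\k[[x]]$ and invoke separatedness of the filtration, whereas the paper stays with $\Lambda$ and reads off $r_q$ and the $e^q_i$ directly from the single graded piece $(t-1)P/(t-1)^2P$; both are valid and rest on the same lemmas.
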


\begin{proof}
The equivalence \eqref{m0} $\Leftrightarrow$ \eqref{m1} 
is an immediate consequence of decompositions 
\eqref{eq:primary}--\eqref{eq:hdecomp}.  To prove 
the equivalence \eqref{m1} $\Leftrightarrow$ \eqref{m2}, 
recall we have a spectral sequence 
$E^1_{-p,q}=\gr^p(\k\Z)\otimes H_{q-p}(X,\k)\Rightarrow 
H_q(X,\k\Z_\nu)$, whose differential $d^1\colon 
H_q(X,\k)\to H_{q-1}(X,\k)$ is the transpose of $\cdot\nu_{\k}$. 

\eqref{m2} $\Rightarrow$ \eqref{m1}. 
The condition $\beta_{\le k}(X,\nu_{\k})=0$ is equivalent 
to $E^2_{-p,q}=0$, for $q-p\le k$ and $p>0$.  
Consequently, $E^{\infty}_{-p,q}=0$, in the same range. 
By convergence of the spectral sequence and 
Lemma \ref{lem:equal filt}, this means:
\[
\frac{(t-1)^p \cdot H_{q-p}(X,\k\Z_{\nu})}
{(t-1)^{p+1} \cdot H_{q-p}(X,\k\Z_{\nu})}=0, \quad
\text{for $q-p\le k$ and $p>0$}.
\]
Taking $p=1$ in the above, and recalling the discussion 
from \S\ref{subsec:kz mod}, we obtain
\[
\frac{(t-1) \cdot (P^q_0 \oplus P^{q}_{t-1})}
{(t-1)^{2} \cdot (P^q_0 \oplus P^{q}_{t-1})}=0,
\]
for all $q\le k$. This implies $r_q=0$ (since 
$\gr^1(\Lambda)\ne 0$) and $e^{q}_{i}=0$, for $i>1$ 
(since $\dim_\k \gr^1(P^{q}_{t-1})=\sum_{i>1} e^q_i$), 
for all $q\le k$, which is equivalent to $t-1=0$ on 
$P^q_0 \oplus P^{q}_{t-1}$. 

\eqref{m1} $\Rightarrow$ \eqref{m2}.
Induction on $k$. For $k=0$, we have 
\[
\beta_0(X,\nu_{\k})= \dim_\k \ker(\cdot \nu_{\k} \colon 
H^0(X,\k)\to H^1(X,\k))=0.
\]
Now assume $\beta_{\le {k-1}}(X,\nu_{\k})=0$.  Then 
$E^2_{-1,k+1}=\cdots = E^{\infty}_{-1,k+1}$.  
Moreover, we always have $\beta_q(X,\nu_{\k})=\dim_\k 
E^2_{-p,q+p}$, for all $p\ge 1$.  In our situation, we have 
\[
\beta_k(X,\nu_{\k})= \dim_\k E^{\infty}_{-1,k+1}
= 
\dim_\k\frac{(t-1) \cdot (P^k_0 \oplus P^{k}_{t-1})}
{(t-1)^{2} \cdot (P^k_0 \oplus P^{k}_{t-1})},
\]
and this vanishes, by assumption.  The induction step 
is thus proved.
\end{proof}

\section{Bounds on twisted cohomology ranks: I}
\label{sect:betti bounds}

In this section, we give  upper bounds on the 
ranks of the cohomology groups with coefficients in 
a rank $1$ local system defined by a rational character 
of prime-power order.  

\subsection{Twisted Betti numbers}
\label{subs:twist betti}
We start with some definitions.  Let $X$ be a connected, 
finite-type CW-complex.  Given a field $\k$, the 
{\em $\k$-Betti numbers} of $X$ are defined as  
$b_q(X, \k) := \dim _{\k} H^q( X,  \k)$. 
More generally, if $R$ is a Noetherian ring, define 
$b_q(X, R)$ to be the minimal number of generators 
of the $R$-module $H^q(X,R)$. 

Now suppose $\rho \colon \pi\to \C^{\times}$ is a homomorphism 
from  $\pi=\pi_1(X)$ to the multiplicative group of non-zero 
complex numbers. The {\em twisted Betti numbers} of $X$ 
corresponding to the character $\rho$ are defined by
\begin{equation}
\label{eq:twisted betti}
b_q(X, \rho) := \dim _{\C} H^q( X,  {}_{\rho}\C), 
\end{equation}
where ${}_{\rho}\C=\C$, viewed as a left module over $\C\pi$, via 
$\rho$.  By duality, $b_q(X, \rho) = \dim _{\C} H_q( X, \C_{\rho})$, 
where $\C_{\rho}=\C$, viewed as a right module over
$\C\pi$, via $\rho$.  

Of particular importance to us are characters of the form 
$\rho(g)=\zeta^{\nu(g)}$, where $\nu\colon \pi \to \Z$ is a 
homomorphism, $d$ is a positive integer, and $\zeta$ is 
a primitive $d$-th root of unity.  In this case, we say $\rho$ 
is a {\em rational character} of order $d$, and write 
\begin{equation}
\label{eq:nud betti}
b_q(X, \nu/d) := b_q(X,\rho). 
\end{equation}
It follows from elementary Galois 
theory that the twisted Betti numbers $b_q(X, \nu/d)$ do 
not depend on the choice of primitive $d$-th root of unity, 
thereby justifying the notation. 

\subsection{Cyclotomic polynomials}
\label{subs:cyclo}
For each positive integer $d$, the $d$-th cyclotomic polynomial 
is defined as $\Phi_d(t)=\prod_{\zeta} (t-\zeta)$, where 
$\zeta$ ranges over all primitive $d$-th roots of unity. 
If $d=p^r$, with $p$ a prime, then 
$\Phi_{d}(t)=(t^{p^r}-1)/(t^{p^{r-1}}-1)$, and so 
$\Phi_{d}(1)=p$. If $d$ is not a prime power, and $d>1$, 
then $\Phi_{d}(1)=1$. 
 
\begin{lemma}
\label{lem:cyclo}
Let $Q\in \Z[t]$ be a polynomial with integer coefficients.  
Suppose $Q(\zeta)=0$, for some root of unity $\zeta\in \C$ 
of prime-power order $p^r$. Then $Q(1)=0  \pmod{p}$. 
\end{lemma}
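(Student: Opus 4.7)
The plan is to exploit the fact that $\zeta$ is a root of the $p^r$-th cyclotomic polynomial, and then evaluate at $t=1$ using the identity $\Phi_{p^r}(1)=p$ already noted in the paragraph preceding the lemma.

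First I would recall that since $\zeta$ is a primitive $p^r$-th root of unity, its minimal polynomial over $\Q$ is $\Phi_{p^r}(t)$, which lies in $\Z[t]$ and is monic. The hypothesis $Q(\zeta)=0$ therefore implies $\Phi_{p^r}(t)\mid Q(t)$ in $\Q[t]$. Because $\Phi_{p^r}$ is monic with integer coefficients, the division algorithm in $\Z[t]$ produces a quotient with integer coefficients as well (alternatively, one may invoke Gauss's lemma). Hence there exists $R(t)\in\Z[t]$ such that
\[
Q(t)=\Phi_{p^r}(t)\,R(t).
\]

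Next I would simply evaluate at $t=1$. Using $\Phi_{p^r}(1)=p$, this gives $Q(1)=p\cdot R(1)$, and since $R(1)\in\Z$, we conclude $Q(1)\equiv 0\pmod{p}$, as required.

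There is really no serious obstacle here: the only point worth verifying carefully is the passage from divisibility in $\Q[t]$ to divisibility in $\Z[t]$, which is immediate from monicity of $\Phi_{p^r}$. Everything else is a direct application of facts already collected in \S\ref{subs:cyclo}.
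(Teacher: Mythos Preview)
Your proof is correct and follows essentially the same approach as the paper's own proof: identify $\Phi_{p^r}$ as the minimal polynomial of $\zeta$, deduce $\Phi_{p^r}\mid Q$, and evaluate at $t=1$ using $\Phi_{p^r}(1)=p$. Your version is simply a bit more explicit about why the divisibility holds in $\Z[t]$ rather than just $\Q[t]$, a detail the paper leaves implicit.
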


\begin{proof}
The minimal polynomial of $\zeta$ is the cyclotomic polynomial 
$\Phi_{p^r}(t)$. Thus, $\Phi_{p^r}\mid Q$. But  
$\Phi_{p^r}(1)=p$, and so $p\mid Q(1)$. 
\end{proof}

The following Corollary will be useful in the sequel. 
Let $A$ be a matrix with entries in $\Z[t^{\pm 1}]$.  
We will denote by $A(z)$ the evaluation 
of $A$ at a non-zero complex number $z\in \C^{\times}$.  
For $p$ a prime, we may also view $A(1)$, after 
reduction modulo~$p$, as a matrix with entries in $\F_p$. 

\begin{corollary} 
\label{cor:minors}
Let $\zeta$ be a root of unity of order a power of a 
prime $p$. Then 
\[
\rank_\C A(\zeta) \ge \rank_{\F_p} A(1).
\]
\end{corollary}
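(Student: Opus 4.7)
\smallskip

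The plan is to exploit the classical fact that, over any field, the rank of a matrix equals the maximal size of a square submatrix with nonzero determinant, and then reduce the inequality to a single application of Lemma \ref{lem:cyclo} on cyclotomic polynomials.

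Set $r = \rank_{\F_p} A(1)$. By the minor characterization of rank, there exists an $r \times r$ submatrix $B$ of $A$ (with entries in $\Z[t^{\pm 1}]$) such that the integer $\det B(1)$ is nonzero modulo $p$. I will show that $\det B(\zeta) \neq 0$ in $\C$, which forces $\rank_{\C} A(\zeta) \geq r$, as desired.

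To prove $\det B(\zeta) \neq 0$, argue by contradiction: suppose it vanishes. Let $Q(t) := \det B(t) \in \Z[t^{\pm 1}]$, and pick an integer $N \geq 0$ large enough that $\widetilde{Q}(t) := t^N Q(t)$ lies in $\Z[t]$. Since $\zeta \neq 0$ and $Q(\zeta)=0$, also $\widetilde{Q}(\zeta)=0$, and by construction $\widetilde{Q}(1) = Q(1) = \det B(1)$. Applying Lemma \ref{lem:cyclo} to $\widetilde{Q}$, we conclude $p \mid \det B(1)$, contradicting the choice of $B$.

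The argument has essentially no obstacle; the only thing that requires a moment's thought is the passage from Laurent polynomials to ordinary polynomials, which is handled by the harmless clearing of denominators $Q \mapsto t^N Q$ above (the evaluation at $1$ is unaffected, and the evaluation at the nonzero $\zeta$ vanishes if and only if $Q(\zeta)=0$). Everything else is a direct appeal to Lemma \ref{lem:cyclo}.
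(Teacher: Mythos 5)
Your proof is correct and follows essentially the same route as the paper: both arguments reduce the rank inequality to the minor characterization of rank, clear denominators to pass from $\Z[t^{\pm 1}]$ to $\Z[t]$, and invoke Lemma \ref{lem:cyclo} to conclude that a minor nonvanishing mod $p$ at $t=1$ cannot vanish at $\zeta$. The only difference is cosmetic: you phrase it as a proof by contradiction on a single maximal minor, while the paper states the contrapositive for an arbitrary minor.
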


\begin{proof}
Suppose $m(t)$ is a minor of $A$, and $m(\zeta)=0$. 
Then $t^k m(t)\in \Z[t]$, for some $k\ge 0$, and 
$\zeta^k m(\zeta)=0$. Hence, by Lemma \ref{lem:cyclo}, 
$m(1)=0$ in $\F_p$. The conclusion follows. 
\end{proof}

\subsection{Modular Betti bounds}
\label{subs:mod bb bound}
The following result relates the two kinds of Betti numbers 
defined above, under a prime-power assumption on 
the order of the rational character.  
\begin{theorem}
\label{thm:bettibound}
Let $X$ be a connected, finite-type CW-complex.
Let  $\nu\colon \pi_1(X)\to \Z$ be a homomorphism, 
and $p$ a prime. Then, for all $r\ge 1$ and $q\ge 0$, 
\begin{equation}
\label{eq:bettibound}
b_q(X,\nu/p^r) \le b_q(X,\F_p).
\end{equation}
\end{theorem}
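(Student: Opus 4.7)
The plan is to realize both sides of inequality \eqref{eq:bettibound} as dimensions of homologies of a single $\Z[t^{\pm 1}]$-chain complex, evaluated at two different scalars, and then appeal to Corollary~\ref{cor:minors}.

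First, note that the character $\rho(g)=\zeta^{\nu(g)}$ (for $\zeta$ a primitive $p^r$-th root of unity) factors as $\pi \xrightarrow{\nu} \Z \xrightarrow{t\mapsto \zeta} \C^{\times}$, so that $\C_{\rho}$ becomes a $\Z[t^{\pm 1}]$-module with $t$ acting as multiplication by $\zeta$. By Kronecker duality and the Galois-invariance of $b_q(X,\nu/p^r)$ (independence of the choice of primitive root explained in \S\ref{subs:twist betti}), it suffices to show $\dim_{\C} H_q(X,\C_{\rho}) \le \dim_{\F_p} H_q(X,\F_p)$. Choose a cellular basis; then the equivariant chain complex $C_{\bullet}(X,\Z\Z_{\nu})$ of \S\ref{subsec:exr} is a complex of free $\Z[t^{\pm 1}]$-modules of rank $c_q$ (the number of $q$-cells of $X$), with boundary $\tilde{\partial}_q$ represented by a $c_{q-1}\times c_q$ matrix $A_q$ with entries in $\Z[t^{\pm 1}]$. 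A routine change-of-rings argument shows that $C_{\bullet}(X,\C_{\rho})$ is obtained from $C_{\bullet}(X,\Z\Z_{\nu})$ by evaluating $A_q$ at $t=\zeta$ and tensoring with $\C$, whereas $C_{\bullet}(X,\F_p)$ is obtained by setting $t=1$ (trivializing the cover) and reducing modulo $p$.

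The rest is linear algebra. For any chain complex of finite-dimensional vector spaces one has $\dim H_q = c_q - \rank \partial_q - \rank \partial_{q+1}$, hence
\begin{equation*}
b_q(X,\F_p) - b_q(X,\nu/p^r) = \bigl(\rank_{\C} A_q(\zeta) - \rank_{\F_p} A_q(1)\bigr) + \bigl(\rank_{\C} A_{q+1}(\zeta) - \rank_{\F_p} A_{q+1}(1)\bigr),
\end{equation*}
and Corollary~\ref{cor:minors} ensures that each parenthesized term is nonnegative.

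The main obstacle, such as it is, lies in careful bookkeeping to match conventions --- chain versus cochain, left versus right module structures, and the role played by the choice of primitive $p^r$-th root of unity --- so that the evaluation-at-$t$ formalism genuinely realizes the two boundary matrices $A_q(\zeta)$ and $A_q(1)\bmod p$. Once that alignment is established, the desired inequality is an immediate consequence of Corollary~\ref{cor:minors}, which encodes the essential cyclotomic-arithmetic input via the identity $\Phi_{p^r}(1)=p$.
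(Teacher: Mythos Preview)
Your argument is correct and is essentially identical to the paper's own proof: both realize the two Betti numbers via the single $\Z[t^{\pm 1}]$-chain complex $C_{\bullet}(X,\Z\Z_{\nu})$, evaluate the boundary matrices at $t=\zeta$ and at $t=1$ (mod $p$), and invoke Corollary~\ref{cor:minors} to compare ranks. The only difference is notational (the paper writes $\dnu_q$ for your $A_q$) and your extra paragraph on bookkeeping, which is accurate but not needed for the formal argument.
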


\begin{proof}
Let $C_{\bullet}(X,\Z)=(C_q,\partial_q)_{q\ge 0}$ be the 
cellular chain complex of $X$, and let $\Z\Z_{\nu}$ 
be the ring $\Z\Z=\Z[t^{\pm 1}]$, viewed as a module over 
$\Z\pi_1(X)$ via the ring map $\bar{\nu}\colon \Z\pi_1(X) \to \Z\Z$. 
The equivariant chain complex $C_{\bullet}(X,\Z\Z_{\nu})$  
has chains $C_q(X,\Z\Z_\nu) = \Z[t^{\pm 1}] \otimes  C_q$, 
and  differentials 
\begin{equation}
\label{eq:del nu}
\dnu_q:=\tilde{\partial}^{\Z\Z_{\nu}}_q\colon 
\Z[t^{\pm 1}] \otimes  C_q\to\Z[t^{\pm 1}] \otimes  C_{q-1}.
\end{equation}  
Note that $\dnu_q(1)=\partial_q$. 
From the definition of twisted Betti numbers, we have
\begin{equation}
\label{eq:bettitwist}
b_q(X, \nu/p^r) = \rank C_q -\rank_\C \dnu_q(\zeta)
  -\rank_\C \dnu_{q+1}(\zeta), 
\end{equation}
where $\zeta$ is a primitive root of 
unity of order $p^r$.    

Now consider the chain complex $C_{\bullet}(X,\F_p)$. 
By definition, $C_q(X,\F_p)=C_q\otimes \F_p$.  Clearly, 
the differential $\partial_q \otimes \id_{\F_p}$ equals 
the reduction mod $p$ of $\dnu_q(1)$. Thus,
\begin{equation}
\label{eq:betti p}
b_q(X,\F_p) = \rank C_q -\rank_{\F_p} \dnu_q(1)
  -\rank_{\F_p} \dnu_{q+1}(1).
\end{equation}

The desired inequality follows from \eqref{eq:bettitwist}, 
\eqref{eq:betti p}, and Corollary \ref{cor:minors}.  
\end{proof}

\begin{remark}
\label{rem:knots}
Given a knot $K$ in $S^3$, let $X=S^3 \setminus K$ be 
the knot complement, $\pi=\pi_1(X)$ the knot group, and 
$\nu=\ab\colon \pi \surj \Z$ the abelianization map.   
The {\em Alexander polynomial} of the knot, 
$\Delta_K(t)\in \Z[t^{\pm 1}]$, is the greatest common 
divisor of the codimension $1$ minors of the Alexander 
matrix, $\partial_2^{\nu}$, defined as in \eqref{eq:del nu}.  
Let $\rho \colon \pi\to \C^{\times}$ be a non-trivial character.  
Writing $\rho(g)=z^{\nu(g)}$, for some $z\in  \C^{\times}$, 
we have:
\begin{equation}
\label{eq:alex root}
b_1(X,\rho)\ne 0  \same \Delta_K(z)=0
\end{equation}
(see \cite{DPS07} for a much more general statement). 
In particular, $b_1(X,\nu/d)\ne 0$ if and only if $\Delta_K(\zeta)=0$, 
for some primitive $d$-th root of unity $\zeta$. 
\end{remark}

\subsection{Examples and discussion}
\label{subsec:sharp bettibound} 
We conclude this section by discussing the necessity 
of the hypothesis in Theorem \ref{thm:bettibound}, 
and the sharpness of inequality \eqref{eq:bettibound}.

We start with the prime-power hypothesis. Suppose $d$ is a 
positive integer so that $d\ne p^r$, for any prime $p$. 
One may wonder whether an inequality of the form
\begin{equation}
\label{eq:betti general}
b_q(X,\nu/d) \le b_q(X,R)
\end{equation}
holds, for some suitable choice of Noetherian ring $R$.  
The following example shows that this is not possible, 
in general.  

\begin{example}
\label{ex:knots1}
By assumption, $d$ is not a prime-power integer;  
thus, $\Phi_d(1)=1$.  On the other hand, 
$\Phi_d(t^{-1})\equiv \Phi_d(t)$, up to units in $\Z[t^{\pm 1}]$.  
Hence, by a classical result of Seifert \cite{Se}, there 
is a knot $K_d$ in $S^3$ with Alexander polynomial 
$\Delta_{K_d}(t)=\Phi_d(t)$.  

Fix an integer $n>1$, and let $K=\sharp^n K_d$ be the 
connected sum of  
$n$ copies of $K_d$. Denote by $X_d =S^3 \setminus K_d$ 
and $X =S^3 \setminus K$ the corresponding knot complements. 
By additivity of Alexander invariants under connected sums of 
knots (see for instance \cite{Ro}), we have an isomorphism 
of $\C\Z$-modules, 
$H_1(X, \C \Z_{\nu})\cong  H_1(X_d, \C \Z_{\nu_d})^n$, 
where the twisted coefficients are defined by abelianization.
Let $\C_{\zeta}=\C$, viewed as a $\C\Z$-module via evaluation of
Laurent polynomials at a primitive $d$-root of unity $\zeta$. 
Then
\begin{align*}
b_1(X, \nu/d)&= \dim_{\C} (\C_{\zeta} \otimes_{\C\Z} 
H_1(X, \C\Z_{\nu})) \\ 
&= n \dim_{\C} (\C_{\zeta} \otimes_{\C\Z} H_1(X_d, \C\Z_{\nu_d})) \\
& = n\, b_1(X_d, \nu_d/d). 
\end{align*}
But $\Delta_{K_d}(\zeta)=\Phi_d(\zeta)=0$, and so, as noted in 
Remark \ref{rem:knots}, $b_1(X_d, \nu_d/d)\ge 1$. 
Thus, $b_1(X, \nu/d)\ge n$.  On the other hand, 
$b_1(X,R)=1$, for any ring $R$.    
\end{example}

Next, we show that inequality \eqref{eq:bettibound} 
from Theorem \ref{thm:bettibound} may fail to 
be an equality.

\begin{example}
\label{ex:knots2}

Let $X=S^3\setminus K$ be a knot complement, 
$\nu\colon \pi_1(X)\surj \Z$ the abelianization map, 
and $d=p^r$.  If $b_1(X,\nu/d)$ were non-zero,  then 
$\Delta_K(\zeta)$ would vanish, and so $\Phi_d(t)$ would divide 
$\Delta_K(t)$.  But $\Phi_d(1)=p$, while $\Delta_K(1)=\pm 1$. 
Hence, we have $b_1(X,\nu/d)=0$, yet $b_1(X,\F_p)=1$.   
\end{example}

Finally, we show that the bound  \eqref{eq:bettibound} 
cannot be improved.

\begin{example}
\label{ex:sharp}
Let $X$ be any connected, finite-type  CW-complex with 
$H_q(X,\Z)$ torsion-free, and let $\nu\colon \pi_1(X)\to \Z$ 
be the zero map.  Then $b_q(X,\nu/p^r)=b_q(X,\F_p)=b_q(X)$, 
for all primes $p$.  
\end{example}

\section{Bounds on twisted cohomology ranks: II}
\label{sect:aomoto bounds}

Continuing the theme from the previous section, 
we sharpen the bounds on the twisted Betti numbers 
$b_q(X,\nu/p^r)$, by using the Aomoto Betti numbers 
$\beta_q(X,\nu_{\F_p})$ instead of the 
usual Betti numbers $b_q(X,\F_p)$, under an 
additional freeness assumption on $H_*(X,\Z)$. 

\subsection{Decomposing the boundary map}
\label{subsec:homology}
Let $X$ be a connected, finite-type CW-complex.
Assume $\k=\Z$ and $H_*(X,\Z)$ is torsion-free, or 
$\k$ is a field.  Let $C_q=C_q(X,\k)$ be the the group of 
cellular $q$-chains over $\k$, and $\partial_q\colon C_q \to C_{q-1}$ 
the boundary map. Writing $Z_q=\ker \partial_q$ and 
$B_{q-1}=\im \partial_{q}$, we have a split exact sequence 
\[
0\to Z_q \to C_q \xrightarrow{\partial_q} B_{q-1}\to 0.
\] 
By assumption, $H_q=Z_q/B_q$ is a free $\k$-module.  
Thus, 
\begin{equation}
\label{eq:chain decomp}
C_q \cong Z_q \oplus B_{q-1} \cong B_q \oplus N_q,
\end{equation}
where $N_q \cong H_q\oplus B_{q-1}$.  The next Lemma follows at once. 

\begin{lemma}
\label{eq:block}
With respect to the direct sum decompositions $C_q=Z_q \oplus B_{q-1}$ 
and $C_{q-1}=B_{q-1} \oplus N_{q-1}$, the matrix of the differential 
$\partial_q\colon C_q \to C_{q-1}$ takes the block-matrix form 
\begin{equation}
\label{eq:del decomp}
\partial_q =\begin{pmatrix}
0  & \id \\
0 & 0
\end{pmatrix}.
\end{equation} 
\end{lemma}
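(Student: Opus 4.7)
The plan is to unpack the definitions of the two direct sum decompositions; once this is done, the block form of $\partial_q$ is essentially tautological.

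First I would make the splitting $C_q = Z_q \oplus B_{q-1}$ explicit by choosing a $\k$-linear section $\sigma_q \colon B_{q-1} \to C_q$ of the surjection $\partial_q \colon C_q \surj B_{q-1}$, whose existence is guaranteed by the split exact sequence $0 \to Z_q \to C_q \to B_{q-1} \to 0$ (the torsion-free/field hypothesis ensures splittability). The identification of the second summand with $B_{q-1}$ is made along $\sigma_q$, so by construction the composite $\partial_q \circ \sigma_q$ equals $\id_{B_{q-1}}$. Similarly, for the codomain I would simply fix a $\k$-linear complement $N_{q-1}$ to $B_{q-1}$ inside $C_{q-1}$, yielding $C_{q-1} = B_{q-1} \oplus N_{q-1}$ with $B_{q-1}$ sitting literally as the first summand.

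Next I would evaluate $\partial_q$ on a general element. Writing $c \in C_q$ uniquely as $c = z + \sigma_q(b)$ with $z \in Z_q$ and $b \in B_{q-1}$, one computes
\[
\partial_q(c) \;=\; \partial_q(z) + \partial_q \sigma_q(b) \;=\; 0 + b \;=\; b,
\]
which, in the decomposition $C_{q-1} = B_{q-1} \oplus N_{q-1}$, corresponds to the pair $(b,0)$. Reading off the contributions of the two input summands then yields the two columns of the claimed block matrix: the $Z_q$-column is zero, and the $B_{q-1}$-column is $\bigl(\begin{smallmatrix}\id\\ 0\end{smallmatrix}\bigr)$. This is exactly formula \eqref{eq:del decomp}.

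There is really no hard step, only a consistency check in bookkeeping: one must remember that the $B_{q-1}$ appearing as the second summand of $C_q$ is an isomorphic copy (via $\sigma_q$), not a literal subgroup of $C_{q-1}$, whereas the $B_{q-1}$ appearing as the first summand of $C_{q-1}$ is the actual image of $\partial_q$. Once this is kept straight, the lemma is immediate.
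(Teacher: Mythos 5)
Your proof is correct and is exactly the argument the paper has in mind (the paper simply asserts the lemma "follows at once" from the decomposition $C_q\cong Z_q\oplus B_{q-1}\cong B_q\oplus N_q$): choosing a section $\sigma_q$ of $\partial_q\colon C_q\surj B_{q-1}$ and computing $\partial_q(z+\sigma_q(b))=b$ gives the claimed block form immediately. Your closing remark about distinguishing the copy $\sigma_q(B_{q-1})\subset C_q$ from the literal subgroup $B_{q-1}=\im\partial_q\subset C_{q-1}$ is exactly the right bookkeeping point.
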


\begin{remark}
\label{rem:uct}
Suppose $H_*(X,\Z)$ is torsion-free. Then, by standard 
homological algebra, $Z_q(X,\k)=Z_q(X,\Z)\otimes \k$, 
$B_q(X,\k)=B_q(X,\Z)\otimes \k$, and 
$H_q(X,\k)=H_q(X,\Z)\otimes \k$. It easily follows that 
the block-matrix decomposition \eqref{eq:del decomp}  
of $\partial_q$  is compatible with the canonical 
coefficient homomorphism $\Z\to \k$. 
\end{remark}

\subsection{Aomoto bounds on the twisted Betti numbers}
\label{subsec:aomoto bound}
We are now ready to state the main result of this section.  

\begin{theorem}
\label{thm:cohobound}
Let $X$ be a connected, finite-type CW-complex, with 
$H_*(X,\Z)$ torsion-free. Let  $\nu\colon \pi_1(X)\to \Z$ be 
a homomorphism, and $p$ a prime.  Then, 
for all $r\ge 1$ and $q\ge 0$, 
\begin{equation}
\label{eq:cohobound}
b_q(X, \nu/p^r) \le \beta_q (X,\nu_{\F_p}).
\end{equation}
\end{theorem}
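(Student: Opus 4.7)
My plan is to refine the rank-comparison argument of Theorem \ref{thm:bettibound} by invoking the block-matrix structure of $\partial_q$ from Lemma \ref{eq:block}. Let $\zeta$ be a primitive $p^r$-th root of unity. I will work inside the discrete valuation ring $\Z_p[\zeta]$, whose uniformizer is $\pi := \zeta-1$ and whose residue field is $\F_p$. Using Lemma \ref{eq:block}, I fix $\Z$-splittings $C_q = Z_q \oplus B_{q-1}$ (with $Z_q = H_q \oplus B_q$) and $C_{q-1} = B_{q-1} \oplus N_{q-1}$ (with $N_{q-1} \cong H_{q-1} \oplus B_{q-2}$) in which $\partial_q$ has the anti-diagonal form \eqref{eq:del decomp}. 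Since $\dnu_q \in \Mat(\Z[t^{\pm 1}])$ has finitely many terms, Taylor-expanding around $t=1$ produces a finite sum
\[
\dnu_q = \partial_q + x\,\delta^{(1)}_q + x^2 \delta^{(2)}_q + \cdots, \qquad x := t-1,
\]
with each $\delta^{(i)}_q$ an integer matrix; let $C_1$ denote the $(2,1)$-block of $\delta^{(1)}_q$, viewed as a map $Z_q \to N_{q-1}$.

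Evaluating at $t = \zeta$, the matrix $\dnu_q(\zeta)$ over $\Z_p[\zeta]$ takes the block form
\[
\dnu_q(\zeta) = \begin{pmatrix} \pi A(\pi) & \id + \pi B(\pi) \\ \pi C(\pi) & \pi D(\pi) \end{pmatrix},
\]
where $A, B, C, D \in \Mat(\Z_p[\zeta])$ are polynomials in $\pi$ whose constant terms are the four blocks of $\delta^{(1)}_q$. Since $\id + \pi B(\pi)$ reduces to $\id$ modulo $\pi$, it is invertible over $\Z_p[\zeta]$ by Nakayama. Standard row and column operations therefore reduce $\dnu_q(\zeta)$ to the anti-diagonal form
\[
\begin{pmatrix} 0 & \id + \pi B(\pi) \\ \pi C'(\pi) & 0 \end{pmatrix}, \qquad C'(\pi) := C(\pi) - \pi D(\pi)\bigl(\id + \pi B(\pi)\bigr)^{-1} A(\pi),
\]
whose rank over the fraction field $\Q_p(\zeta)$ equals $\dim B_{q-1} + \rank_{\Q_p(\zeta)} C'(\pi)$. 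Since $C'(\pi) \equiv C_1 \pmod{\pi}$, the minor-lifting argument underlying Corollary \ref{cor:minors}, applied over the DVR $\Z_p[\zeta]$, yields $\rank_{\Q_p(\zeta)} C'(\pi) \ge \rank_{\F_p} C_1$, and hence $\rank_\C \dnu_q(\zeta) \ge \dim B_{q-1} + \rank_{\F_p} C_1$.

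The next step is to identify $\rank_{\F_p} C_1$ with $\rho_{q-1} := \rank d^1_q$, where $d^1_q\colon H_q(X, \F_p) \to H_{q-1}(X, \F_p)$ is the spectral sequence differential of Corollary \ref{cor:d1z}. Expanding $\dnu_{q-1}\dnu_q = 0$ to order $x$ gives the cocycle relation $\partial_{q-1}\delta^{(1)}_q + \delta^{(1)}_{q-1}\partial_q = 0$; applied to $h \in H_q \subset Z_q$ (where $\partial_q(h) = 0$), it forces $C_1(h) \in H_{q-1} \subset N_{q-1}$. Unwinding the definition of $d^1$ as the connecting homomorphism for the $I$-adic filtration, one checks that $C_1|_{H_q}$ agrees with $d^1_q$, so $\rank_{\F_p} C_1 \ge \rho_{q-1}$. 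Combining, $\rank_\C \dnu_q(\zeta) \ge \dim B_{q-1} + \rho_{q-1}$, and analogously $\rank_\C \dnu_{q+1}(\zeta) \ge \dim B_q + \rho_q$. Substituting into \eqref{eq:bettitwist}, and using $\rank C_q = b_q(X) + \dim B_{q-1} + \dim B_q$ together with $\beta_q(X, \nu_{\F_p}) = b_q(X) - \rho_{q-1} - \rho_q$, one obtains $b_q(X, \nu/p^r) \le \beta_q(X, \nu_{\F_p})$.

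The main obstacle will be the chain-level identification of $C_1|_{H_q}$ with $d^1_q$; although it is forced by the description of the equivariant spectral sequence as the first-order deformation of $\partial_\bullet$ (cf.\ Corollary \ref{cor:d1z}, after dualizing the cup-product by $\nu_{\F_p}$), some care with the non-canonical splittings of $C_q$ and $C_{q-1}$ is needed to make the identification precise.
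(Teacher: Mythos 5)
Your proposal is correct and follows essentially the same route as the paper: the same splittings $C_q = Z_q\oplus B_{q-1}$ and $C_{q-1}=B_{q-1}\oplus N_{q-1}$ from Lemma \ref{eq:block}, the same use of the prime-power hypothesis (your elimination over the discrete valuation ring $\Z_p[\zeta]$ with uniformizer $\zeta-1$ is exactly what the paper packages as Corollary \ref{cor:minors} applied to the matrix $\As_q$ obtained by dividing the first block column of $\dnu_q$ by $t-1$), and the same identification of the linear coefficient $C_1=\Rs_q(1)$ of the $(2,1)$ block with the transpose of $\cdot\nu_{\F_p}$. The step you flag as the main obstacle is handled in the paper by the short explicit computation $\delta^1_q(z)=\Rs_q(1)z$ for $z\in Z_q(X,\F_p)$, preceded by an easy reduction to the case of surjective $\nu$, which you would also need in order to invoke Corollary \ref{cor:d1z} verbatim.
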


\begin{proof}
Without loss of generality, we may assume $\nu$ is surjective. 
Indeed, if $\nu =0$ then clearly 
\[
b_q(X, \nu/p^r) = b_q(X, \C) \le 
b_q(X, \F_p) =\beta_q (X,\nu_{\F_p}).
\]
On the other hand, if the image of $\nu$ has index $m$ in $\Z$, 
then plainly $b_q(X, \nu/p^r) = b_q(X, \nu'/p^s)$, for an epimorphism 
$\nu'\colon \pi_1(X)\surj \Z$ such that $\nu =m \nu'$. 
(Here $p^s$ is the order of $\zeta^m$, where $\zeta$ is 
a primitive root of unity of order $p^r$.) If $p$ divides 
$m$, then $\nu_{\F_p}= 0$ and claim \eqref{eq:cohobound} becomes 
inequality \eqref{eq:bettibound}. Otherwise, $s\ge 1$ and 
$\beta_q (X,\nu_{\F_p})= \beta_q (X,\nu'_{\F_p})$. Hence, 
the result for $\nu'$ implies the claim for $\nu$.

Consider the equivariant chain complex 
$C_{\bullet}(X,\Z\Z_\nu)$, viewed as a chain 
complex over $\Lambda=\Z[t^{\pm 1}]$.  Identify 
$C_q(X,\Z\Z_\nu) = \Lambda \otimes C_q$, 
where $C_q=C_q(X,\Z)$, and  denote by  
$\dnu_q\colon \Lambda\otimes C_q\to 
\Lambda \otimes C_{q-1}$ the boundary maps.  

Since $\dnu_q(1)=\partial_q$ and $\left. \partial_q \right|_{Z_q}=0$, 
the restriction of $\dnu_q$ to $\Lambda\otimes Z_q$ takes values in 
$J\otimes C_{q-1}$, where $J=(t-1)\Lambda$.  
As in \S\ref{subsec:homology}, write $C_q= Z_q \oplus B_{q-1}$ 
and $C_{q-1} = N_{q-1}\oplus B_{q-1}$. Using 
formula \eqref{eq:del decomp}, we see that 
$\dnu_q$ takes the block-matrix form 
\begin{equation}
\label{eq:dq}
\dnu_q =\begin{pmatrix}
(t-1) \Ps  & \Qs \\
(t-1) \Rs & \Ss
\end{pmatrix},
\end{equation}
where $\Qs(1)$ is the identity, and $\Ss(1)$ 
is the zero matrix. For future use, define the block-matrix 
$\As_q:=\left(\begin{smallmatrix} \Ps  & \Qs \\ \Rs & \Ss
\end{smallmatrix}\right)$, 
and note that $\rank_\C \dnu_q(z) = \rank_\C \As_q(z)$, 
for any $z\in \C\setminus \{0,1\}$.  

Recall from \eqref{eq:bettitwist} that 
$b_q(X, \nu/p^r) = \rank C_q -\rank_\C \dnu_q(\zeta)
  -\rank_\C \dnu_{q+1}(\zeta)$, 
where  $\zeta$ is a primitive root of 
unity of order $p^r$.   Hence, 
\begin{equation}
\label{eq:bqa}
b_q(X, \nu/p^r) =\rank C_q -\rank_\C \As_q(\zeta) 
  -\rank_\C \As_{q+1}(\zeta).
\end{equation}

Let us estimate the right side.  By Corollary \ref{cor:minors}, 
\begin{equation}
\label{eq:aqz}
\rank_\C \As_q(\zeta) \ge \rank_{\F_p} \As_q(1).
\end{equation}
By Lemma \ref{eq:block} and Remark \ref{rem:uct},  
\begin{equation}
\label{eq:arb}
\rank_{\F_p} \As_q(1)=\rank_{\F_p} \Rs_q(1) 
+ \rank B_{q-1}. 
\end{equation}
Combining \eqref{eq:bqa},  \eqref{eq:aqz}, and \eqref{eq:arb} with 
the equality 
$b_q(X)= \rank C_q -\rank  B_q -\rank  B_{q-1}$, we obtain
\begin{equation}
\label{eq:bqnupr}
b_q(X, \nu/p^r)
\le  b_q(X)  -\rank_{\F_p}  \Rs_q(1)  -\rank_{\F_p}  \Rs_{q+1}(1).
\end{equation}

We now turn to the Aomoto-Betti numbers. By definition, 
\begin{equation}
\label{eq:betaqnufp}
\beta_q(X,\nu_{\F_p}) = b_q(X,\F_p)-\rank_{\F_p} \nu^q_{\F_p}
  -\rank_{\F_p} \nu^{q+1}_{\F_p}, 
\end{equation}
where $\nu^q_{\F_p} \colon H^{q-1}(X,\F_p)\to 
H^{q}(X,\F_p)$ denotes left-multiplication by 
$\nu_{\F_p}\in H^1(X,\F_p)$.  
By Corollary \ref{cor:d1z}, this map is dual to 
the differential $d^1_q\colon E^1_{0,q} \to E^1_{-1,q}$, 
where $E^1_{0,q} = \gr^0(\F_p\Z)\otimes_{\F_p} H_q(X,\F_p)$ 
and $E^1_{-1,q} = \gr^1(\F_p\Z)\otimes_{\F_p} H_{q-1}(X,\F_p)$.   
Using the identifications $\gr^0(\F_p\Z)=\F_p$ and 
$\gr^1(\F_p\Z)=x\cdot \F_p\cong \F_p$, where 
$x=t-1$, we may view this differential as a map 
$d^1_q\colon H_q(X,\F_p) \to H_{q-1}(X,\F_p)$. 

Clearly, $d^1_q$ has the same rank as 
$\delta^1_q=\iota \circ d^1_q \circ \pi$, 
where $\pi\colon Z_q(X,\F_p) \surj H_q(X,\F_p)$ is the 
projection, and $\iota \colon H_{q-1}(X,\F_p) \inj 
N_{q-1}(X,\F_p)$ is the inclusion. 
Hence, 
\begin{equation}
\label{eq:betaqd1}
\beta_q(X,\nu_{\F_p}) = b_q(X)-\rank_{\F_p} \delta^1_q
  -\rank_{\F_p} \delta^1_{q+1}.
\end{equation}

In view of \eqref{eq:bqnupr} and \eqref{eq:betaqd1}, 
it suffices to show  that $\delta^1_q=\Rs_q(1)$.  
Let $z\in Z_q(X,\F_p)$.  Using formula \eqref{eq:dq}---%
with everything reduced mod $p$---%
we find:
\begin{align*}
\delta^1_q(z) 
& = \iota( [\dnu_q(1 \otimes z) \bmod J^2] ) \\
&= (t-1) \Rs_q(t) z \bmod J^2 \\
&=x \Rs_q(1) z \bmod J^2\\
&\equiv \Rs_q(1) z
\end{align*}
where at the last step we used the identification 
$\gr^1(\F_p\Z)=J/J^2=x\cdot \F_p\cong \F_p$. 
This finishes the proof.
\end{proof}

\subsection{Necessity of the hypothesis}
\label{subsec:hypo}
We now give examples showing that the two hypotheses 
in Theorem \ref{thm:cohobound} are necessary.  We start 
with the prime-power hypothesis. 

Given a Noetherian ring $R$, let $\nu_R\in H^1(X,R)$ be the 
cohomology class determined by the homomorphism 
$\pi_1(X)\xrightarrow{\nu} \Z \xrightarrow{\iota} R$, 
where $\iota(1)=1$.  Define $\beta_q(X, \nu_R)$ to 
be the minimal number of generators of the $R$-module 
$H^q(H^*( X,R) , \cdot\nu_R)$.  

Now suppose $d$ is a 
positive integer so that $d\ne p^r$, for any prime $p$. 
One may wonder whether an inequality of the form
$b_q(X,\nu/d) \le \beta_q(X,\nu_R)$ holds, for some 
suitable choice of Noetherian ring $R$.  
The following example shows that this is not possible. 

\begin{example}
\label{ex:coho torus}
Let us start by recalling an old result of R.~Lyndon  
(see \cite[Thm.~3.6]{Bi}). If $v_1,\dots,v_n$ are  
elements in $\Lambda= \Z\Z^n$ satisfying 
$\sum_{i=1}^n v_i(t_i-1)=0$, then there is a 
word $r\in F'_n$ such that $(\partial r/\partial x_i)^{\ab}=v_i$, 
for all $i$. Hence, if $\pi=\langle x_1,\dots, x_n\mid r\rangle$ 
is the corresponding $1$-relator group,  and $X$ is the 
presentation $2$-complex, then the chain 
complex $C_{\bullet}(X^{\ab},\k)$ has boundary 
maps $\widetilde{\partial}^{\ab}_2\colon \Lambda \to \Lambda^n$ 
and $\widetilde{\partial}^{\ab}_1\colon \Lambda^n \to \Lambda$ 
given by
\[
\widetilde{\partial}^{\ab}_2(1 \otimes e_2) 
=\sum_{i=1}^n v_i e^i_1
\quad\text{and}\quad 
\widetilde{\partial}^{\ab}_1(e^i_1) = 
(t_i -1) e_0.
\] 

Now take  $v_1=\Phi_d(t_1) (t_2-1)$ and 
$v_2=\Phi_d(t_1) (1-t_1)$, and let 
$X$ be the $2$-complex constructed above.   
Let $\nu\colon \pi\surj \Z$, $\nu(x_1)=\nu(x_2)=1$, 
and fix a primitive $d$-th root of unity $\zeta$. The 
chain complex $C_{\bullet}(X,\C_{\rho})\colon \C \to \C^2 \to \C$ 
corresponding to the character  $\rho\colon \pi\to \C^{\times}$, 
$\rho(g)=\zeta^{\nu(g)}$, has boundary maps 
$\partial^{\rho}_2=0$ and 
$\partial^{\rho}_1 =\Big(\begin{smallmatrix}\zeta-1\\[2pt] 
\zeta-1\end{smallmatrix}\Big)$. 
Hence, $H_1(X,\C_{\rho})=\C$, and so $b_1(X,\nu/d)=1$.  

Next, let $R$ be a Noetherian ring, and set $J=I_R\Z^2$.  
The differential  $d^1\colon H_2(X,R) \to H_1(X,R)\otimes H_1(X,R)$ 
on $E^1(X,R\Z^2_{\ab})$ is given by:
\begin{align*}
d^1( \brac{e_2}) 
&= \widetilde{\partial}^{\ab}_2(1\otimes e_2)  \mod J^2 \\
&=\Phi_d(t_1) (t_2-1) \otimes \brac{e^1_{1}}  + 
\Phi_d(t_1) (1-t_1) \otimes  \brac{e^2_{1}}  \mod J^2 \\
&=\Phi_d(1) (t_2-1) \otimes  \brac{e^1_{1}}  + 
\Phi_d(1) (1-t_1) \otimes  \brac{e^2_{1}}  \mod J^2 \\ 
&\equiv \Phi_d(1) \left( \brac{e^2_{1}} \otimes  \brac{e^1_{1}} - 
 \brac{e^1_{1}} \otimes  \brac{e^2_{1}}\right). 
\end{align*}
Hence, by Corollary \ref{cor:d1z}, 
the map $\cdot \nu_R \colon H^1(X,R) \to H^2(X,R)$ 
sends $\brac{e^{1}_{1}}^*\mapsto \Phi_d(1)\brac{e_2}^*$ 
and $\brac{e^{2}_{1}}^*\mapsto -\Phi_d(1)\brac{e_2}^*$. 
On the other hand, the map 
$\cdot \nu_R \colon H^0(X, R) \to H^1(X, R)$ 
sends $\brac{e_{0}}^*\mapsto \brac{e^1_1}^*+\brac{e^2_1}^*$.
Recall we assumed $d$ is not a prime power, i.e., $\Phi_d(1)=1$. 
Therefore, $H^1(H^*(X,R),\nu_R) = 0$.

To recap, we showed that $b_1(X,\nu/d)=1$, yet $\beta_1(X,\nu_R)=0$. 
Let us note that $X$ is a minimal CW-complex, in the sense 
of the definition from \S\ref{subsect:mini} below; indeed, 
$\epsilon(v_1)=\epsilon(v_2)=0$, and so the boundary maps in 
$C_{\bullet}(X,\Z)$ vanish.  
Furthermore,  Corollary \ref{cor:d1 transp} implies that 
$X$ has the same cohomology ring as $S^1\times S^1$. 
\end{example}

Next, we show that the hypothesis that $H_*(X,\Z)$ 
be torsion-free is really necessary.

\begin{example}
\label{ex:tors free}
Start with $Y=S^1 \vee S^2$, and identify $\pi_1(Y)=\Z$, 
with generator $t$, and set $\Lambda=\Z[t^{\pm 1}]$. Using 
the construction from Example \ref{ex:construction}, build the  
CW-complex $X=Y\cup_{\phi_{1+t}} e_3$.  The equivariant 
chain complex of $X$ can be written as 
\begin{equation}
\label{eq:equiv2}
C_{\bullet}(\wX,\Z)\colon
\xymatrix{\Lambda \ar[r]^{1+t} & \Lambda \ar[r]^{0} 
& \Lambda \ar[r]^{t-1} &\Lambda}.
\end{equation}
Consequently, $C_{\bullet}(X,\Z)$ has the form 
$\Z \xrightarrow{2} \Z \xrightarrow{0} \Z 
\xrightarrow{0} \Z$, and so $H_2(X,\Z)=\Z_2$.  

Now take $\nu\colon \pi_1(X)\to \Z$ to be the identity,  
and pick the prime $p=2$.  The chain complex 
$C_{\bullet}(X,\C_{\rho})$ corresponding to the 
rational character $\rho(t)=-1$ has the form 
$\C \xrightarrow{0} \C \xrightarrow{0} 
\C \xrightarrow{-2} \C$, and so $b_3(X,\nu/2)=1$.

On the other hand,  it follows from \eqref{eq:equiv2}
that all boundary maps of $C_{\bullet}(X,\F_2)$ 
are $0$. Hence, $H_i(X,\F_2)=\F_2$, 
generated by $\brac{e_i}$, for $0\le i\le 3$. Moreover, 
$\nu_{\F_2}=\brac{e_1}^*$, the generator of 
$H^1(X,\F_2)=\F_2$. 
We also know from \eqref{eq:equiv2} that 
$\tilde{\partial}_3(1\otimes e_3)= (1+t) \otimes e_2$. Hence, the differential 
$d^1\colon H_3(X,\F_2)\to  H_2(X,\F_2)$ of $E^1 (X, \F_2 \Z_{\nu})$
is given by $\brac{e_3} \mapsto  \brac{e_2}$.  
By Corollary \ref{cor:d1z}, the map 
$\cdot \nu_{\F_2} \colon H^2(X,\F_2) \to H^3(X,\F_2)$ 
takes $\brac{e_2}^*$ to $\brac{e_3}^*$. Hence, 
$\beta_3(X,\nu_{\F_2})=0$. 
\end{example}

\subsection{Sharpness of the bound}
\label{subsec:sharp}
Inequality \eqref{eq:cohobound} from 
Theorem \ref{thm:cohobound}  may fail to 
be an equality, as we now show.

\begin{example}
\label{ex:coho strict}

Let $G=\langle x, y \mid [x,y]^p  \rangle$, let $X$ be 
the associated presentation $2$-complex, and 
let $\nu\colon G\to \Z$ be the diagonal character, 
sending both $x$ and $y$ to $1$. We then have 
$b_1(X,\nu/p)=0$, whereas $\beta_1(X,\nu_{\F_p})=1$.
\end{example}

On the other hand, the bound  \eqref{eq:cohobound} 
cannot be improved, as the next example shows.  

\begin{example}
\label{ex:coho sharp}

Let $\A$ be a subarrangement of a complexified reflection 
arrangement of type $A$, $B$, or $D$, and let $X$ be 
the complement of $\A$.  
Choose $\nu\colon \pi_1(X) \to \Z$ to be the diagonal 
character, sending each oriented meridian to $1$.  
In this case, the bound in Theorem \ref{thm:cohobound} 
is attained at all primes, for $q=1$ and $r=1$; see 
\cite[Theorem C]{MP}. 
\end{example}

\section{Galois covers, minimality, and linearization}
\label{sect:minilin}

In this section, we analyze in detail the first page of the
equivariant spectral sequence of an arbitrary Galois cover. 
Using this approach, we give an intrinsic meaning to the 
linearization of the equivariant chain complex, in the 
important case of minimal CW-complexes.  Throughout, 
$\k$ will denote the integers $\Z$, or a field. 

\subsection{Minimal cell complexes}
\label{subsect:mini}

We start by reviewing a notion discussed in \cite{PS};  
see also \cite{DP1, DP2} for various applications.
Let $X$ be a connected, finite-type CW-complex.  
We say the CW-structure on $X$ is {\em minimal} 
if the number of $q$-cells of $X$ coincides with the 
(rational) Betti number $b_q(X)$, for every $q\ge 0$. 
Equivalently, the boundary maps in the cellular 
chain complex $C_{\bullet}(X,\Z)$ are all the zero maps. 
In particular, $X$ has a single $0$-cell, call it $e_0$.  

If $X$ is a minimal cell complex, the homology groups 
$H_*(X,\Z)$ are all torsion-free. In particular, 
$H_q(X,\k)=H_q(X,\Z)\otimes \k$ and 
$H^q(X,\k)=H_q(X,\k)^*$, for $\k=\Z$ or $\k$ a field.

Even if the homology groups of $X$ are torsion-free, the 
space $X$ need not be minimal.  For example,  if $K$ is 
a knot in $S^3$, with complement $X$, then 
$H_*(X,\Z)=H_*(S^1,\Z)$, yet $X$ does not admit a 
minimal CW-structure, unless $K$ is the trivial knot.

Examples of spaces admitting minimal CW-structures  
are: spheres $S^n$, tori $T^n$, orientable Riemann surfaces, 
and complex Grassmanians---in fact, any compact, connected 
smooth manifold admitting a perfect Morse function.  If $\A$ 
is a complex hyperplane arrangement, then its complement, 
$X$, has a minimal cell decomposition; see \cite{DP1}. 

\subsection{Linearizing the equivariant boundary maps}
\label{subsec:lin diff}
Let $X$ be a minimal CW-complex, with 
fundamental group $\pi=\pi_1(X)$. As usual, let 
$(C_{\bullet}(X,\k), \partial)$ be the cellular chain 
complex of $X$, and let $(C_{\bullet}(\wX,\k), \tilde{\partial})$ 
be the equivariant chain complex, with filtration 
$F^n= I^n \cdot C_{\bullet} (\wX, \k)$, where $I=I_{\k} \pi$.    
Let  $\nu\colon \pi \surj G$ be an epimorphism, and consider 
the chain complex $(C_{\bullet}(X, \k{G}_{\nu}), \tilde{\partial}^G)$, 
with filtration $F^{n} C_{\bullet}(X, \k{G}_{\nu}) = 
J^n \otimes_{\k} C_{\bullet}(X,\k)$, where 
$J=\bar{\nu}(I) =I_{\k} G$. 

\begin{lemma}
\label{lem:minfilt}
$\tilde{\partial}^G F^n C_{\bullet}(X, \k{G}_{\nu}) 
\subset F^{n+1} C_{\bullet}(X, \k{G}_{\nu})$, for all $n\ge 0$.
\end{lemma}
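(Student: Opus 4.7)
The plan is to deduce this from Lemma~\ref{lem:grd} applied with the coefficient module $M=\k G_{\nu}$, combined with the defining property of a minimal CW-complex.

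First I would recall the setup. By Lemma~\ref{lem:irj}\eqref{i1}, we have $\k G_{\nu}\cdot I^n = J^n$, so the $I$-adic filtration on $C_{\bullet}(X,\k G_{\nu})$ agrees with the filtration $F^n = J^n\otimes_{\k} C_{\bullet}(X,\k)$ given in the statement. We already know from Section~\ref{sec:spectral} (right after \eqref{eq:filtcxm}) that the boundary $\tilde{\partial}^G$ preserves this filtration, so what we must actually prove is the stronger strict inclusion $\tilde{\partial}^G(F^n)\subset F^{n+1}$, i.e.\ that the induced map on $F^n/F^{n+1}$ vanishes.

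Next I would invoke Lemma~\ref{lem:grd}, which identifies this induced map: specializing $M=\k G_{\nu}$ gives
\[
\gr(\tilde{\partial}^G_q) = \id_{\gr(\k G_{\nu})} \otimes_{\k} \partial_q
\]
as a map on $\gr^n(\k G_{\nu})\otimes_{\k} C_{q}(X,\k)= J^n/J^{n+1}\otimes_{\k}C_{q}(X,\k)$ (using Lemma~\ref{lem:irj}\eqref{i2} to identify $\gr_I(\k G_{\nu})=\gr_J(\k G)$). By the assumption that the CW-structure on $X$ is minimal, the cellular boundary $\partial_q\colon C_q(X,\k)\to C_{q-1}(X,\k)$ is zero for every $q$. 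Hence $\gr(\tilde{\partial}^G_q)=0$, which is exactly the desired statement $\tilde{\partial}^G(F^n)\subset F^{n+1}$.

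There is no genuine obstacle here: the whole argument is a direct application of the already-established graded formula combined with the vanishing of $\partial$. The only point requiring mild care is the identification of the $I$-adic filtration on $\k G_{\nu}$ with the $J$-adic filtration on $\k G$, which is immediate from Lemma~\ref{lem:irj} and is needed to match the filtration in the statement with the one used in Lemma~\ref{lem:grd}.
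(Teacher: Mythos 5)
Your proof is correct. It rests on the same underlying fact as the paper's argument --- that the boundary map, read modulo one step of the filtration, is just the ordinary cellular boundary $\partial$, which vanishes by minimality --- but you package it differently. The paper does not invoke Lemma~\ref{lem:grd} at all: it first proves the case $n=0$ for the universal cover directly from the relation $p\circ\tilde{\partial}=\partial\circ p$ of \S\ref{subsec:diff}, then transfers the inclusion $\tilde{\partial}\,F^0\subset F^1$ to the $G$-cover using the surjection $\nu\otimes\id\colon C_{\bullet}(\wX,\k)\surj C_{\bullet}(X,\k G_{\nu})$ and its compatibility with filtrations, and finally bootstraps from $n=0$ to general $n$ via $\k G$-linearity of $\tilde{\partial}^G$ (since $\tilde{\partial}^G(J^n\cdot c)=J^n\cdot\tilde{\partial}^G(c)\subset J^n\cdot F^1=F^{n+1}$). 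You instead cite Lemma~\ref{lem:grd} with $M=\k G_{\nu}$, which already handles all $n$ at once, and you correctly flag the one identification needed to match filtrations, namely $\k G_{\nu}\cdot I^n=J^n$ from Lemma~\ref{lem:irj}. Your route is the more economical one, since the general-$n$ statement and the passage from $C_{\bullet}(\wX,\k)$ to $C_{\bullet}(X,\k G_{\nu})$ are both already absorbed into Lemma~\ref{lem:grd}; the paper's route makes the mechanism (vanishing at level zero plus equivariance) more explicit. Either argument is complete.
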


\begin{proof}
Recall from \S\ref{subsec:diff} that $p\circ \tilde{\partial} = 
\partial \circ p$, where 
$p\colon C_{\bullet}(\wX,\k) \surj C_{\bullet}(\wX,\k)/ 
F^1 C_{\bullet}(X,\k)$ is the quotient map. 
Consequently, $\tilde{\partial} F^0 \subset F^1$, 
by minimality of $X$. 

From the discussion in \S\ref{subsec:func}, we know that 
$(\nu\otimes \id)\circ \tilde{\partial} \tilde{\partial}^G\circ (\nu\otimes \id)$, 
and that $\nu \otimes \id$ preserves filtrations. 
Since $\nu \otimes \id$ is onto, we infer that 
$\tilde{\partial}^G F^0 \subset F^1$.  
The conclusion follows from 
$\k{G}$-linearity of $\tilde{\partial}^G$.
\end{proof}

\begin{definition}
\label{def:lin}
The {\em linearization} of the boundary map $\tilde{\partial}^G$
is the map induced by $\tilde{\partial}^G$ 
at the associated graded level,
\begin{equation*}
\label{eq:del lin}
\partial_G^{\linn} \colon \gr^*_F C_{\bullet} (X, \k{G}_{\nu})
\to  \gr^{*+1}_F C_{\bullet} (X, \k{G}_{\nu}).
\end{equation*}
\end{definition}

Now use again the minimality of $X$ to identify $C_q(X,\k) = H_q(X,\k)$ 
and $C_{\bullet} (X, \k{G}_{\nu})= \k{G} \otimes_{\k} H_q(X,\k)$.  
From the construction of the equivariant spectral sequence, 
we obtain immediately that
\begin{equation*}
\label{eq:freelin}
E^1( X, \k G_{\nu}) = E^0( X, \k G_{\nu}),\quad \text{and} 
\quad d^1_G= \partial_G^{\linn} .
\end{equation*}

At this point, it is easy to give a concrete interpretation 
of linearization, in terms of matrices and natural $\k$-bases provided by cells. 
For each $q\ge 1$,  
denote by $\Mat(\tilde{\partial}_q^G)$ the matrix 
corresponding to the boundary map 
\[
\tilde{\partial}_q^G \colon \k{G} \otimes_{\k} H_q(X,\k) 
\to \k{G} \otimes_{\k} H_{q-1}(X,\k).
\] 
By Lemma \ref{lem:minfilt}, all the entries of $\Mat(\tilde{\partial}_q^G)$ 
belong to the ideal $J=I_{\k}G$. Reducing those entries modulo 
$J^2$, we obtain a new matrix, denoted by 
$\Mat (\tilde{\partial}_q^G )\bmod J^2$, with entries in 
$J/J^2 \cong H_1(G, \k)$. 

\begin{corollary}
\label{cor:matlin}
Let $X$ be a minimal CW-complex, and let 
$\nu\colon \pi_1(X)\surj G$ be an epimorphism. Then, 
for all $q\ge 1$, 
\[
\Mat (\tilde{\partial}_q^G ) \bmod J^2= \Mat \big( \!
\xymatrixcolsep{50pt}
\xymatrix{H_q(X, \k) \ar^(.36){(\nu_{*}\otimes \id)\circ \nabla_X}[r] & 
H_1(G, \k)\otimes_{\k} H_{q-1}(X, \k)}\! \big) .
\]
\end{corollary}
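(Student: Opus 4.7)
The plan is to combine two prior facts: the displayed identity $d^1_G = \partial_G^{\linn}$ for minimal complexes (stated just above the corollary), and Proposition \ref{prop:d1 kg}, which already expresses the restriction of $d^1_G$ to $1 \otimes H_q(X,\k)$ in terms of $\nu_*$ and $\nabla_X$. The corollary is then essentially a translation of that identification into matrix language, using the canonical $\k$-bases provided by the cells of the minimal structure.

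First I would unpack the content of $\Mat(\tilde{\partial}_q^G) \bmod J^2$. By minimality, the cellular $\k$-basis of $C_q(X,\k)$ descends to a $\k$-basis of $H_q(X,\k)$, and the identification $C_q(X,\k{G}_{\nu}) = \k{G} \otimes_{\k} H_q(X,\k)$ makes $\tilde{\partial}_q^G$ into a $\k{G}$-linear map whose matrix is determined by its values on the generating set $1 \otimes H_q(X,\k)$. By Lemma \ref{lem:minfilt}, these values lie in $F^1 C_{q-1} = J \otimes_{\k} H_{q-1}(X,\k)$; hence reducing entries modulo $J^2$ gives a well-defined $\k$-linear map with values in $(J/J^2) \otimes_{\k} H_{q-1}(X,\k)$, which under the isomorphism $J/J^2 \cong H_1(G,\k)$ from \eqref{eq:gr1} is a map
\[
H_q(X,\k) \longrightarrow H_1(G,\k) \otimes_{\k} H_{q-1}(X,\k).
\]

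Next I would observe that this reduction-mod-$J^2$ map is exactly the restriction of $\partial_G^{\linn}$ to the degree-zero summand $\gr^0_F C_q(X,\k{G}_{\nu}) = 1 \otimes H_q(X,\k)$, by the very definition of the linearization. The displayed equation preceding the corollary gives $d^1_G = \partial_G^{\linn}$ in the minimal setting, so $\Mat(\tilde{\partial}_q^G) \bmod J^2$ coincides with the matrix of $d^1_G$ restricted to $1\otimes H_q(X,\k)$, expressed in the bases dual to the cell bases of $H_q$ and $H_{q-1}$. Applying Proposition \ref{prop:d1 kg} identifies this restriction with $(\nu_* \otimes \id) \circ \nabla_X$, which yields the claim.

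No serious obstacle arises: the only point requiring care is verifying that the isomorphism $J/J^2 \cong H_1(G,\k)$ used in the formulation of the corollary agrees with the one implicit in Proposition \ref{prop:d1 kg}, but both arise from the same natural map $g \mapsto g-1$ of \eqref{eq:gr1}, so the compatibility is automatic. The proof is therefore essentially a packaging step, converting the intrinsic description of $d^1_G$ established earlier into the explicit matrix statement needed for applications to minimal CW-structures.
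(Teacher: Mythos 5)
Your proposal is correct and follows exactly the paper's route: identify $\Mat(\tilde{\partial}_q^G)\bmod J^2$ with the matrix of $\partial_G^{\linn}=d^1_G$ via Lemma \ref{lem:minfilt} and the identification $J/J^2\cong H_1(G,\k)$, then invoke Proposition \ref{prop:d1 kg}. The extra care you take about the compatibility of the two identifications of $J/J^2$ is a reasonable (and correct) elaboration of what the paper leaves implicit.
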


\begin{proof}
With the identifications discussed above, $\Mat(\tilde{\partial}_q^G) 
\bmod J^2$ is the matrix of $\partial_G^{\linn}=d^1_G$. The 
conclusion follows from Proposition \ref{prop:d1 kg}.
\end{proof}

\begin{remark}
\label{rem:nuid}
The above equality was 
proved in \cite[Theorem 20]{DP1} for  $\nu =\id$ and $\k= \Z$, 
provided $H^*(X, \Z)$ is generated as a ring in degree one.%
\footnote{The sign in that Theorem comes from considering 
$C_{\bullet}(X, \Z \pi)$ as a right $\Z\pi$-module instead of a 
left  $\Z\pi$-module: see equations (10) and (13) from \cite{DP1}.} 
When $\k$ is a field, there is a connection between the 
chain complex $(E^1(X, \k\pi), d^1)$ and the well-known 
Koszul complex from homological algebra. We refer to 
\cite[Proposition 22]{DP1} for a class of Koszul resolutions 
coming from linearization, and to \cite[Theorem 23]{DP1} for 
an application to the computation of higher homotopy groups.
\end{remark}

\subsection{Linearizing the equivariant cochain complex}
\label{subsec:linearize cochains}
Let $X$ be a connected CW-complex, with $\pi=\pi_1(X)$, 
and let $\nu\colon \pi \surj G$ be an epimorphism. Recall 
from Example \ref{ex:covers} that 
\[
C^{\bullet}(X, {}_{\nu}\k{G})= 
( \Hom_{\k\pi}(C_{\bullet}(\wX,\k), {}_{\nu}\k{G}), \, 
\tilde{\delta}^{\bullet}_G)
\]
denotes the cochain complex of $X$, with coefficients in the 
left $\k\pi$-module ${}_{\nu}\k{G}$. This is a cochain complex 
of (right) $\k{G}$-modules, endowed with a decreasing filtration 
$F^{\bullet}$, with $n$-th term given by 
\[
F^n= \Hom_{\k\pi}(C_{\bullet}(\wX,\k), J^n),
\] 
with $J=I_{\k}G$ viewed as a left $\k\pi$-module via $\nu$. 
Alternatively, if we identify $C^q(X, {}_{\nu}\k{G})$ with 
$\Hom_{\k} (C_q(X,\k), \k{G})$, then  
$F^n$ corresponds to $\Hom_{\k} (C_q(X,\k), J^n)$. 

\begin{lemma}
\label{lem:mindfilt}
Suppose $X$ is a minimal CW-complex. Then 
$\tilde{\delta}_G F^n \subset F^{n+1}$, for all $n\ge 0$.
\end{lemma}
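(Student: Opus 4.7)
The plan is to reduce the cochain statement to Lemma \ref{lem:minfilt} by a direct duality argument, using the identification
\[
C^q(X, {}_{\nu}\k{G}) \cong \Hom_{\k}(C_q(X,\k), \k{G}),
\]
obtained by restricting a $\k\pi$-linear map $\phi$ on $C_q(\wX,\k)=\k\pi \otimes_{\k} C_q(X,\k)$ to $1\otimes C_q(X,\k)$; under this identification, $F^n$ corresponds to the subspace of $\k$-linear maps $\bar\phi$ with image contained in $J^n\subset \k{G}$.

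Next I would compute $\tilde{\delta}^q_G$ in terms of $\tilde{\partial}^G_{q+1}$. Picking the natural $\k$-basis of $C_*(X,\k)$ provided by the cells, write
\[
\tilde{\partial}^G_{q+1}(1\otimes e) = \sum_i a_i(e) \otimes e_i,\qquad a_i(e)\in \k{G},
\]
where the $e_i$ run over the $q$-cells. Then for $\bar\phi\in\Hom_{\k}(C_q(X,\k),\k{G})$,
\[
\overline{\tilde{\delta}^q_G(\phi)}(e) \;=\; \phi\!\bigl(\tilde{\partial}_{q+1}(1\otimes e)\bigr) \;=\; \sum_i a_i(e)\cdot \bar\phi(e_i),
\]
since $\phi$ is $\k\pi$-linear and $\k\pi$ acts on $\k{G}$ through $\bar\nu$. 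Thus $\tilde\delta^q_G$ is given, in the chosen bases, by the same matrix of coefficients $a_i(e)\in \k{G}$ that represents $\tilde\partial^G_{q+1}$.

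Now invoke Lemma \ref{lem:minfilt}, applied at filtration degree $n=0$: because $X$ is minimal, $\tilde{\partial}^G(F^0)\subset F^1 = J\otimes_{\k} C_\bullet(X,\k)$. Equivalently, every coefficient $a_i(e)$ lies in $J$. Consequently, if $\bar\phi\in F^n$, i.e.\ $\bar\phi(e_i)\in J^n$ for each $i$, then
\[
\overline{\tilde{\delta}^q_G(\phi)}(e) = \sum_i a_i(e)\,\bar\phi(e_i) \;\in\; J\cdot J^n = J^{n+1},
\]
so $\tilde{\delta}^q_G(\phi)\in F^{n+1}$, which is the desired conclusion.

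The only subtlety, and the one point I would make sure to treat carefully, is the bookkeeping between the left and right $\k{G}$-module structures: $\k{G}_{\nu}$ is a \emph{right} $\k\pi$-module while ${}_{\nu}\k{G}$ is a \emph{left} one, and this determines on which side the coefficients $a_i(e)$ multiply $\bar\phi(e_i)$. Once this is checked, minimality (through Lemma \ref{lem:minfilt}) forces the $a_i(e)$ to lie in $J$, and everything else is a one-line computation.
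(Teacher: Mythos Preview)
Your proof is correct and follows essentially the same approach as the paper: both reduce the claim to Lemma~\ref{lem:minfilt}, via the observation that the entries of the equivariant boundary matrix lie in the augmentation ideal. The paper's one-line proof invokes Lemma~\ref{lem:minfilt} with $\nu=\id$ (so the entries of $\tilde\partial_{q+1}$ lie in $I$, hence act on ${}_{\nu}\k G$ through $J$), while you invoke it for the given $G$ at $n=0$ (so the $a_i(e)\in J$ directly); these are the same statement up to applying $\bar\nu$, and your explicit matrix computation is just a spelled-out version of what the paper leaves to ``the definition of $\tilde\delta_G$''.
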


\begin{proof}
Follows immediately from the definition of $\tilde{\delta}_G$ and 
Lemma \ref{lem:minfilt} applied to $\nu= \id$.
\end{proof}

Thus, the coboundary map $\tilde{\delta}^{\bullet}_G$ 
induces a (dual) linearization map,
$\gr^*_F C^{\bullet} (X, {}_{\nu}\k{G})\to 
\gr^{*+1}_F C^{\bullet} (X,  {}_{\nu}\k{G})$, 
or, modulo standard identifications,
\begin{equation}
\label{eq:deltalin}
\delta_G^{\linn} \colon C^{\bullet}(X,\k)\otimes_{\k} \gr^*_J (\k{G}) 
\longrightarrow C^{\bullet}(X,\k)\otimes_{\k} \gr^{*+1}_J (\k{G}) .
\end{equation}
It is a routine matter to verify that the map $\delta_G^{\linn}$ 
above is $\gr_J (\k{G})$-linear.

Now fix pairs of natural dual bases for $H_q(X, \k)$ and $H^q(X, \k)$, 
for each $q\ge 1$. It is easy to check that
$\Mat(\tilde{\delta}_G^q)=\Mat(\tilde{\partial}^G_q)^{\tr}$, 
where $(\cdot)^{\tr}$ denotes the transpose.
By Corollary \ref{cor:matlin}, then, 
\begin{equation}
\label{eq:dmatlin}
\Mat(\tilde{\delta}_G^q)\bmod J^2= \Mat\!\big(\!%
\xymatrixcolsep{22pt}
\xymatrix{%
H_q(X, \k)\ar^(.38){(\nu_{*}\otimes \id)\circ \nabla_X}[rr] &&
H_1(G, \k)\otimes_{\k} H_{q-1}(X, \k)}\!\big)^{\tr}. 
\end{equation}

\subsection{Linearization and the Aomoto complex}
\label{subsec:lin aomoto}
Finally, we describe the dual linearization map in more familiar 
terms, in the case when $\nu\colon \pi \surj \pi_{\ab}$ 
is the abelianization homomorphism.  It turns out that this may 
be done in terms of the universal Aomoto complex, so
we begin by reviewing this notion.

Let $H^*$ be a graded $\k$-algebra. We need to assume 
that $H^1$ is a free, finitely-generated $\k$-module, and 
$a^2=0$, for all $a\in H^1$. (These conditions are satisfied by the 
cohomology rings of minimal CW-complexes, with arbitrary 
coefficients.) Pick a $\k$-basis $\{ e_1^*,\dots, e_n^* \}$ 
of $H^1$, and denote by $\{ e_1,\dots, e_n \}$ the 
dual basis of $H_1:= \Hom_{\k}(H^1, \k)$.  Denote by 
$S$ the symmetric algebra $\Sym(H_1)$, and identify 
it with the polynomial ring $\k [e_1,\dots, e_n]$. The 
{\em universal Aomoto complex} of $H$ is the 
cochain complex of free $S$-modules, 
\begin{equation*}
\label{eq:univaom}
\AA^{\bullet} (H)\colon 
\xymatrix{H^0 \otimes_{\k} S \ar^{D^0}[r] & 
H^1 \otimes_{\k} S \ar^{D^1}[r] & 
H^2 \otimes_{\k} S \ar^(.6){D^2}[r] & 
\cdots},
\end{equation*}
where the differentials are defined by 
\begin{equation}
\label{eq:aomoto diff}
D^{q-1}(\alpha \otimes 1)= \sum_{i=1}^{n} 
e_i^* \cdot \alpha \otimes e_i
\end{equation}
for $\alpha\in H^{q-1}$, and then extended by $S$-linearity. 
Our hypothesis on (strong) anti-commu\-tativity of $H^*$ 
in degree one easily implies that $D\circ D= 0$.

The terminology is motivated by the following universal 
property of $\AA^{\bullet}$.  Pick any element 
$z\in H^1= \Hom_{\k}(H_1, \k)$, and denote by 
$\ev_z \colon S\to \k$ the change of rings given by 
evaluation at $z$.  This leads to a specialization of 
$\AA^{\bullet}$, namely to the $\k$-cochain complex 
$\AA^{\bullet} (z):= \AA^{\bullet}\otimes_S \k$.
It is easy to check that $\AA^{\bullet} (z)$ coincides with 
the Aomoto complex of $H$ with respect to $z$, as 
defined in \S\ref{subsec:aomoto}.  We are now in position  
to state the last result of this paper.

\begin{theorem}
\label{thm:linaom}
Let $X$ be a minimal CW-complex. Then the linearization 
of the equivariant cochain complex of the universal abelian 
cover of $X$, with coefficients in $\k=\Z$ or a field,
coincides with the universal Aomoto complex of the 
cohomology ring $H^*(X,\k)$.
\end{theorem}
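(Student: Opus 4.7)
The plan is to check that the two chain complexes in question agree by verifying them on a set of generators, using the $\gr_J(\k G)$-linearity we already have, and then invoking Corollary~\ref{cor:d1 transp} to re-express everything in terms of cup products. Specializing the setup from \S\ref{subsec:linearize cochains} to $G = \pi_{\ab}$ with $\nu = \ab\colon \pi \surj \pi_{\ab}$, the first step is to note that for a minimal CW-complex $X$ the group $H_1(X, \Z) = \pi_{\ab}$ is free abelian of rank $n = b_1(X)$, so $G \cong \Z^n$. Hence $\k G \cong \k[t_1^{\pm 1}, \ldots, t_n^{\pm 1}]$, and the map $t_i - 1 \mapsto e_i$ identifies $\gr_J(\k G)$ with the symmetric algebra $S = \Sym_\k(H_1(X,\k))$; moreover, the Hurewicz map $\nu_*\colon H_1(X,\k) \to H_1(G,\k)$ is an isomorphism, matching the chosen basis $\{e_i\}$ of $H_1(X,\k)$ with the generators of $\gr_J^1(\k G) = S^1$.

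Minimality also identifies $C^{\bullet}(X,\k)$ with $H^{\bullet}(X,\k)$, so the dual linearization map \eqref{eq:deltalin} becomes an $S$-linear map
\[
\delta_{\ab}^{\linn}\colon H^{\bullet}(X,\k) \otimes_{\k} S \longrightarrow H^{\bullet}(X,\k) \otimes_{\k} S,
\]
raising cohomological and $S$-degree by one. The $S$-linearity of $\delta_{\ab}^{\linn}$, established right after \eqref{eq:deltalin}, together with the manifest $S$-linearity of the Aomoto differential $D$, reduces the theorem to checking the equality $\delta_{\ab}^{\linn}(\alpha \otimes 1) = D(\alpha \otimes 1)$ for $\alpha \in H^q(X,\k)$ and every $q\ge 0$.

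For this last check, I would use formula \eqref{eq:dmatlin}, or equivalently Corollary~\ref{cor:d1 transp}: on generators $\alpha \otimes 1$ the map $\delta_{\ab}^{\linn}$ is the transpose of $(\nu_* \otimes \id)\circ \nabla_X$, and under the identification $H_1(G,\k) \cong H_1(X,\k)$ provided by $\nu_*$, its dual $\delta^1_{\ab}\colon H^1(G,\k)\otimes H^q(X,\k) \to H^{q+1}(X,\k)$ is exactly cup product composed with $\nu^*$. Since $\nu^*$ is an isomorphism, it sends the basis $\{\eta_i\}$ of $H^1(G,\k)$ dual to $\{\nu_*(e_i)\}$ to the basis $\{e_i^*\}$ of $H^1(X,\k)$ dual to $\{e_i\}$. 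Reading off the coefficient of $e_i$ gives
\[
\delta_{\ab}^{\linn}(\alpha\otimes 1) = \sum_{i=1}^n (\nu^*(\eta_i)\cup \alpha)\otimes e_i = \sum_{i=1}^n (e_i^* \cdot \alpha)\otimes e_i,
\]
which is precisely $D^q(\alpha\otimes 1)$ as defined in \eqref{eq:aomoto diff}.

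The main obstacle is really only bookkeeping: one must carefully match the three pieces of data---the basis of $H_1(X,\k)$ used to identify $S$ with a polynomial ring, the dual basis of $H^1(X,\k)$ used to write $D$, and the basis of $\gr_J^1(\k G)$ coming from the augmentation ideal---and verify that the Hurewicz isomorphism $\nu_*$ respects all three simultaneously. Once this is in place, Corollary~\ref{cor:d1 transp} delivers the cup-product formula with essentially no further work, so the proof reduces to a short identification on a basis of $H^{\bullet}(X,\k) \otimes 1$.
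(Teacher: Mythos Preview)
Your proposal is correct and follows essentially the same route as the paper's own proof: both identify $\gr_J(\k\pi_{\ab})$ with $S$, reduce by $S$-linearity to generators, and then use \eqref{eq:dmatlin} together with the duality between $\nabla_X$ and $\cup_X$ (which is precisely the content of Corollary~\ref{cor:d1 transp}) to match $\delta_{\ab}^{\linn}$ with the Aomoto differential $D$. Your write-up is more explicit about the basis bookkeeping than the paper's terse three-line argument, but the underlying logic is identical.
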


\begin{proof}
The dual linearized complex is described by 
\eqref{eq:deltalin} and \eqref{eq:dmatlin}. More precisely, 
for each $q\ge 1$, the transposed matrix of 
\[
\delta^{\linn}_{\pi_{\ab}} \colon H^{q-1}(X, \k)\otimes_{\k} S 
\to H^{q}(X, \k)\otimes_{\k} S ,
\]
coincides with the matrix of
\begin{equation}
\label{eq:matnabla}
\nabla_X \colon H_q(X, \k)\to H_1(X, \k)\otimes_{\k} H_{q-1}(X, \k)\, .
\end{equation}
On the other hand, $\nabla_X$ is the transpose of $\cup_X$.
Thus, $\delta^{\linn}_{\pi_{\ab}}$ coincides with the differential 
$D^{q-1}$ from \eqref{eq:aomoto diff}.  
\end{proof}

\begin{ack}
This work was started while the first author visited Northeastern 
University, in Spring, 2006. He thanks the Northeastern Mathematics 
Department for its support and hospitality during this visit. 
A substantial portion of the work was done while both authors 
visited the Abdus Salam International Centre for Theoretical
Physics in Trieste, Italy, in Fall, 2006.  We thank ICTP for its 
support and excellent facilities.
\end{ack}

\vspace{-2pc}
\newcommand{\arxiv}[1]
{\texttt{\href{http://arxiv.org/abs/#1}{arxiv:#1}}}
\renewcommand{\MR}[1]
{\href{http://www.ams.org/mathscinet-getitem?mr=#1}{MR#1}}
\newcommand{\doi}[1]
{\texttt{\href{http://dx.doi.org/#1}{doi:#1}}}

\end{document}